\numberwithin{equation}{section} 
\newtheorem{theorem}{Theorem}[section] 
\newtheorem{proposition}[theorem]{Proposition} 
\newtheorem{lemma}[theorem]{Lemma} 
\newtheorem{corollary}[theorem]{Corollary} 
\newtheorem{defn}[theorem]{Definition}
\theoremstyle{definition}
\newtheorem{example}[theorem]{Example} 
\newtheorem{remark}[theorem]{Remark}
\newcommand{\R}{\mathbb{R}}
\newcommand{\C}{\mathbb{C}}
\newcommand{\PP}{\mathbb{P}}
\date{}
\begin{document} 
\title{\bf Quartic Spectrahedra}

\author{
John Christian Ottem, Kristian Ranestad, \\
Bernd Sturmfels and Cynthia Vinzant}

\maketitle

\begin{abstract}
\noindent
Quartic spectrahedra in $3$-space form a 
semialgebraic set of dimension $24$. 
This set is stratified by the location of the ten nodes
of the corresponding real quartic surface.
There are twenty maximal strata, 
identified recently by Degtyarev and Itenberg,
via the global Torelli Theorem for real K3 surfaces.
We here give a new proof that is self-contained and algorithmic.
This involves extending
 Cayley's  characterization of quartic symmetroids,
 by the property that the branch locus of the projection from a node
consists of two cubic curves. 
 This paper represents a first step towards the classification of
 all spectrahedra of a given degree and dimension.
      \end{abstract}
      
\section{Introduction}

Spectrahedra are fundamental objects in convex algebraic geometry \cite{BPT}.
A spectrahedron is the intersection of the cone of positive-semidefinite matrices
with an affine subspace of the space of real symmetric $n \times n$-matrices.
In this paper, the subspace has dimension three and is identified with $\R^3$.
The algebraic boundary of a three-dimensional spectrahedron in $\R^3$
is a surface $V(f)$ in complex projective space $\C \PP^3$.
Its defining polynomial
$f(x) = f(x_0,x_1,x_2,x_3)$ 
is the determinant of a symmetric matrix $A(x)$ of linear forms.
Explicitly,
\begin{equation}
\label{eq:AxAx}
 A(x) \,\,\, = \,\,\, A_0 x_0 + A_1 x_1 + A_2 x_2 + A_3 x_3 . 
 \end{equation}

The surfaces $V(f)$ are known as {\em symmetroids}
in classical algebraic geometry \cite{Ca1, Co, Cos, J}. 
Generally, we  allow the entries in the symmetric $n \times n$-matrices
$A_0,A_1,A_2,A_3$ to be arbitrary complex numbers.
We say that $V(f)$ is a {\em real symmetroid} if
all $\binom{n+3}{3}$ coefficients of $f$ are  real numbers and that
$V(f)$ is a {\em very real symmetroid} if
$A_0,A_1,A_2,A_3$ can be chosen to have real entries.
The {\em spectrahedron} associated to a very real symmetroid $V(f)$ is the set
\[ S(f) \,\,\, = \,\,\,
\bigl\{ x \in \R \PP^3 \,:\,
A(x)=A_0 x_0 + A_1 x_1 + A_2 x_2 + A_3 x_3
\,\,\,\hbox{is semidefinite}  \}.
\]
 The constraint is denoted $A(x) \succeq 0$ and is called a
 {\em linear matrix inequality} (LMI). 
 Maximizing a linear function over the spectrahedron $S(f)$ is a {\em semidefinite program}. 
 In optimization theory, one often considers the spectrahedron $S(f)$ is an affine hyperplane, e.g. $\{x_0=1\}$. 
 While much of the underlying geometry does not change, it is more convenient for us to use 
 the language of projective geometry, rather than work in affine space. 
 Using G\aa rding's theory of {\em hyperbolic polynomials} \cite{G, Ren},
one sees that $S(f)$ is determined by $f(x)$, i.e.~the 
matrices $A_0,A_1,A_2,A_3$ are not needed to identify the set $S(f)$.
  We say that a symmetroid $V(f)$ is {\em spectrahedral} if it is very real and $S(f)$ is
   full-dimensional in $\R \PP^3$.

A symmetroid is called {\em nodal} if all its singular points are nodes, i.e.~isolated quadratic singularities.  
Generically there are ${n+1 \choose 3}$  nodes on $V(f)$. 
A point $x\in \R\PP^3$ where the matrix $A(x)$ has rank $k$ will be called
a {\em rank-$k$-point}.
 Such a point is  singular on the surface $V(f)$ and generically it is a node. 
 A nodal symmetroid has exactly $\binom{n+1}{3}$  rank-$(n-2)$ points \cite{HT} and is called {\em transversal} if it does not have any further nodes.
A spectrahedron $S(f)$ is {\em nodal} (resp.~{\em transversal}) if its symmetroid
$V(f)$ has this property. For example, the {\em Kummer symmetroid} in 
Figure \ref{fig:kummer} is nodal but not
transversal: it has $16$ nodes, not just~$10$.

The set  ${\cal S}$ of symmetroids forms an irreducible 
variety in the projective space $\C \PP^{\binom{n+3}{3}-1}$ of
all surfaces of degree $n$ in $\C \PP^3$, and a generic point in ${\cal S} $ corresponds to a transversal symmetroid.
The set ${\cal S}_{\rm spec}$ of spectrahedral symmetroids  is
Zariski dense in the variety ${\cal S}$ of complex symmetroids. The objects 
above form a nested sequence of semialgebraic subsets:
\begin{equation}
\label{eq:inclusions} {\cal S}_{\rm spec} \,\,\subseteq \,\,
{\cal S}_{\rm veryreal}  \,\,\subseteq \,\,
{\cal S}_{\rm real}  \,\,= \,\, 
{\cal S} \, \cap \, \R \PP^{\binom{n+3}{3}-1}. 
\end{equation}
For $n = 1$, every spectrahedron is a halfspace. 
For $n=2$, a spectrahedron is a quadratic cone and the left inclusion in (\ref{eq:inclusions}) is an equality.
The quadric $x_0^2 + x_1^2 + x_2^2$  lies in ${\cal S}_{\rm real} \backslash {\cal S}_{\rm veryreal}$,
because no triple of real symmetric $2 \times 2$-matrices satisfies
 ${\rm det}(A_i) = 1 $ and ${\rm det}(A_j+A_k) = 2$.

\vskip -0.42cm
\begin{center}
\begin{figure}[h]
\vskip -0.42cm
\begin{center} 
\includegraphics[height=1.8in]{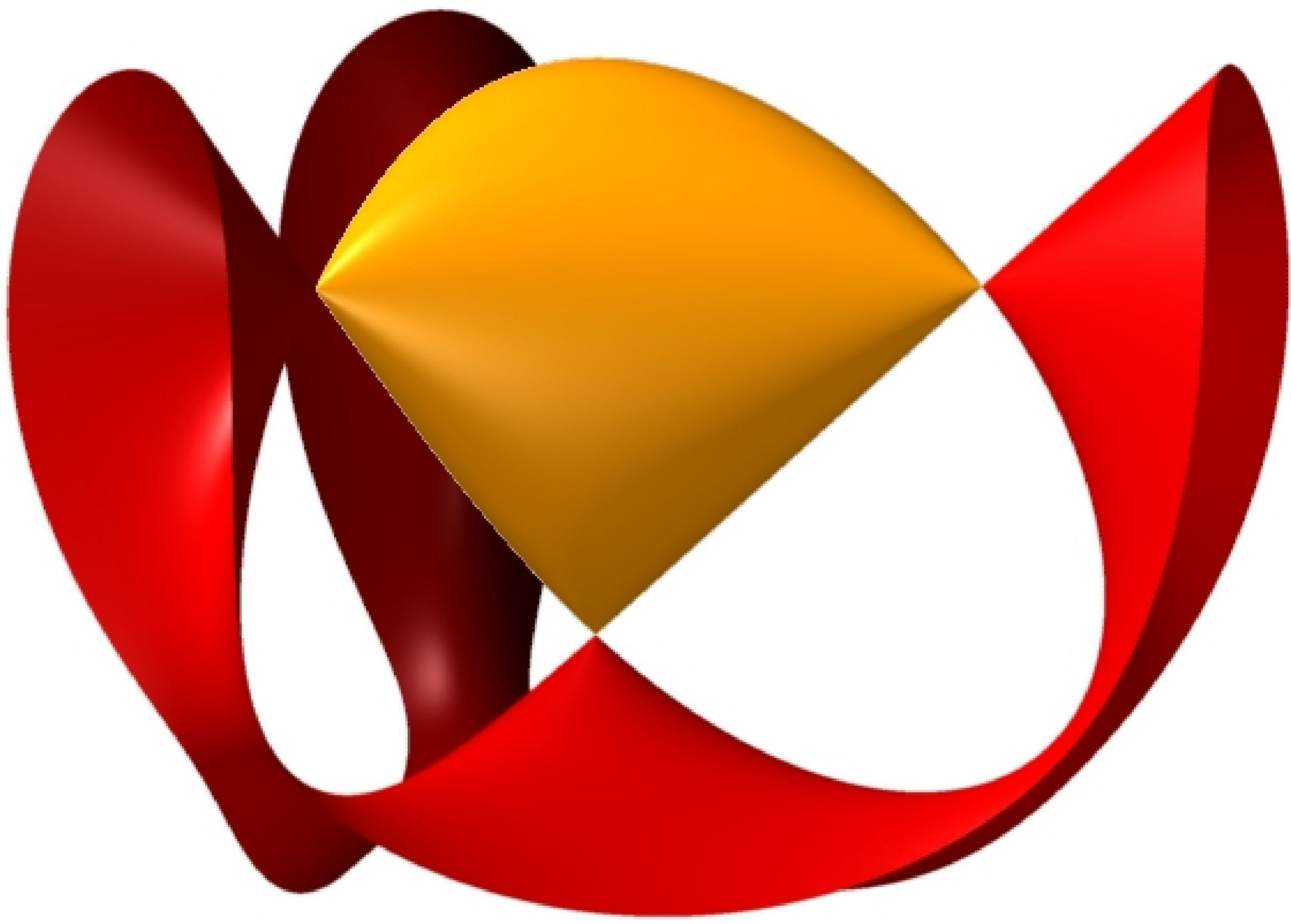}
 \qquad 
\includegraphics[height = 1.7in]{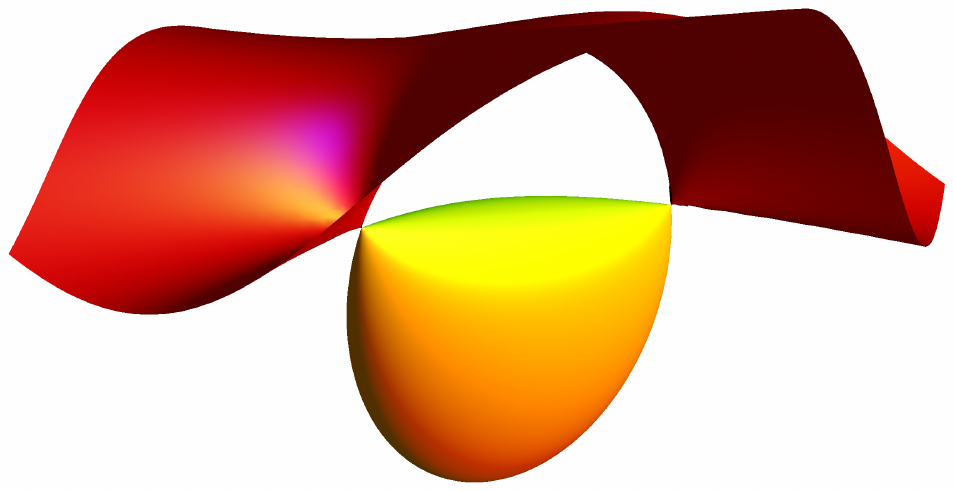}
\end{center}
\caption{\label{fig:redyellow} 
A transversal cubic spectrahedron has either four nodes or two nodes.
}
\end{figure}
\end{center}
\vskip -0.4cm

The case $n=3$ of cubic spectrahedra is  visualized by the two pictures in Figure \ref{fig:redyellow}.
Here the algebraic boundary is Cayley's {\em cubic symmetroid}. Such a cubic surface
has four complex nodes, and it is unique up to projective
transformations over $\C$. It is obtained
by blowing up $\C \PP^2$ at the six intersection points of four general lines.
Cubic symmetroids in $\mathcal{S}_{\rm real}$ correspond to configurations of four lines
that can be defined collectively over $\R$. There are three  types:
\begin{itemize}
\item[(a)] All four lines are defined over $\R$. This gives the left picture with four real nodes.
\item[(b)] Two lines are defined over $\R$ and the others form a  complex conjugate pair of lines.
This gives the right picture, with two real nodes, both in the boundary of the spectrahedron.
\item[(c)] There are two complex conjugate pairs of lines.
\end{itemize}
The two first types  are shown in Figure~\ref{fig:redyellow}. 
Very real cubic symmetroids of type (c) have no real nodes,
 and they belong to the set $\mathcal{S}_{\rm veryreal} \backslash \mathcal{S}_{\rm spec}$ (cf. Remark \ref{cubicspec} and Example \ref{nonspec}).

In this paper we investigate the case $n=4$ of quartic spectrahedra.
The corresponding variety $\mathcal{S}$ has codimension $10$
in the $\C \PP^{34}$ of all quartics. In the context of
convex algebraic geometry it has appeared
in \cite[Theorem 3]{BHORS}, where it parametrizes extremal
non-negative quartics that are not sums of squares.
Such symmetroids, like the Choi-Lam-Reznick quartic in \cite[(8)]{BHORS},
are in $\mathcal{S}_{\rm real} \backslash \mathcal{S}_{\rm veryreal}$.
See Example \ref{nonspec} for a quartic symmetroid
in  $\mathcal{S}_{\rm veryreal} \backslash \mathcal{S}_{\rm spec}$.

In response to a question by the third author,
Degtyarev and Itenberg \cite{DI} studied
the location of the ten nodes on a spectrahedral symmetroid.
They proved the following result:

\begin{theorem} 
\label{thm:DI}
There exists a transversal quartic spectrahedron
with $\sigma$ nodes on its boundary and $\rho$ real nodes in its symmetroid 
if and only if $0 \leq \sigma \leq \rho $, both are even, and $2 \leq\rho \leq 10$.
 \end{theorem}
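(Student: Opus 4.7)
The plan is to split the proof into necessity and sufficiency.

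For \emph{necessity}, the parity of $\rho$ is immediate: complex conjugation permutes the ten nodes of $V(f)$ and fixes exactly the real ones, so the $10 - \rho$ non-real nodes come in conjugate pairs. The parity of $\sigma$ is more delicate. I would establish it by passing to the K3 surface that doubly covers $V(f)$ via the incidence variety of rank-$\le 2$ matrices paired with a chosen kernel vector. On this cover, the fibre over a real rank-$2$ node consists of two real points when the node is PSD and of a complex conjugate pair when the signature is $(1,1)$; hence $\sigma$ is congruent mod $2$ to half the cardinality of a real $0$-cycle on a real K3 surface, whose parity is constrained by the Smith-theoretic Euler-characteristic identity for the real locus. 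Combined with the obvious bounds $0 \le \sigma \le \rho \le 10$, this yields the stated necessary conditions.

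For \emph{sufficiency}, I would invoke the extension of Cayley's characterization promised in the abstract: for a quartic symmetroid $V(f)$ with a designated node $p$, the projection $\pi_p : V(f) \dashrightarrow \PP^2$ is a $2{:}1$ cover whose branch locus is the union of two plane cubics $C_1, C_2$, and the nine points of $C_1 \cap C_2$ pull back (together with $p$) to the remaining nodes of $V(f)$. Conversely, any such pair of cubics reconstructs a quartic symmetroid. To realise an admissible pair $(\rho, \sigma)$, I would fix a real base node $p$ and take $(C_1,C_2)$ either as a real pair (when $\rho - 1$ is odd) or as a complex conjugate pair $C_1 = \overline{C_2}$ (when $\rho - 1$ is even), arranging the number of real intersection points of $C_1 \cap C_2$ to equal $\rho - 1$; this is possible by moving the cubics within their real moduli, since any intersection count of the correct parity between $0$ and $9$ is realised. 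Once the real nodes are in place, the signature at each of them should be toggled by choosing different real lifts of the double cover to a matrix $A(x)$.

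The hard part is precisely that last step: given a candidate pair $(C_1,C_2)$ and a prescribed signature pattern at the real rank-$2$ points, one must produce a \emph{very real} symmetric matrix $A(x)$ whose determinant is the corresponding quartic and whose signatures at the real rank-$2$ nodes match the target $\sigma$. I would approach this by a deformation argument starting from the fully-real stratum $(\rho, \sigma) = (10, 10)$, whose existence is classical and can be exhibited by a suitable perturbation of a diagonal pencil of four generic real linear forms. From there one moves within $\mathcal{S}_{\rm veryreal}$ either by coalescing two real nodes along a real rank-$1$ locus (reducing $\rho$ by $2$) or by flipping the signature at a pair of real rank-$2$ nodes via a real rank-$1$ crossing (reducing $\sigma$ by $2$ while $\rho$ stays fixed). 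Showing that every admissible $(\rho,\sigma)$ is attained by a sequence of such moves while preserving transversality, and that no hidden parity obstruction emerges beyond those on $\rho$ and $\sigma$, is the real content of the theorem; the algorithmic flavour announced in the abstract suggests that each of the $20$ admissible strata should in fact be realised by an explicit normal form $(C_1,C_2,A)$ that one can write down and verify directly.
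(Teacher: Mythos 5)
Two substantive issues, one of which is a genuine gap.

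\textbf{The bound $\rho \geq 2$ is not addressed.} You dismiss the numerical constraints as ``the obvious bounds $0 \le \sigma \le \rho \le 10$,'' but the lower bound $\rho \geq 2$ is not obvious, and your outline does not explain why a transversal quartic spectrahedron cannot have \emph{zero} real nodes. This is the content of Lemma~\ref{lem:atleastone} in the paper, which requires a real topological input: the Friedland--Robbin--Sylvester theorem that every three-dimensional real space of symmetric $4\times 4$ matrices contains a nonzero matrix with a repeated eigenvalue (the paper also supplies a self-contained quaternionic lifting argument through $SO(4) = (\mathbb{S}^3\times\mathbb{S}^3)/\pm$). Example~\ref{nonspec} shows this is sharp: the hypothesis that the spectrahedron is nonempty is essential, since there exist very real quartic symmetroids with no real nodes at all. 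Without something filling this role, your necessity argument is incomplete.

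\textbf{The parity of $\sigma$: your route is the one the paper is replacing.} Your proposal to pass to the K3 double cover and invoke a Smith-theoretic Euler-characteristic constraint on the real locus is, in spirit, the Degtyarev--Itenberg period-domain approach via the Torelli theorem for real K3s --- precisely what the paper announces it wants to avoid in favor of something ``elementary, computational, and self-contained.'' The paper's argument is more concrete: after reducing to a real $(1,1)$-node $p$ (which exists once one knows $\rho \geq 2$ and handles the case all real nodes are on $S$ separately), Proposition~\ref{prop:inside} shows the projection of the spectrahedron from $p$ is the intersection of the two planar spectrahedra bounded by the ovals of the ramification cubics $R_1, R_2$, and the real nodes in $\partial S$ map bijectively to the real intersection points of these two ovals. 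Two ovals in $\R\PP^2$ meet transversally in an even number of points by the Jordan curve theorem, so $\sigma$ is even. This is much lighter machinery than Smith theory and is the genuinely new content of the paper's necessity proof.

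For \textbf{sufficiency}, your deformation plan (start from $(10,10)$, coalesce real node pairs or flip $(2,0)\leftrightarrow(1,1)$ signatures through real rank-$1$ crossings) is reasonable in outline, but you correctly note that checking each move preserves transversality and hits every admissible stratum is the hard part. The paper simply sidesteps this by exhibiting, for each of the twenty admissible pairs $(\rho,\sigma)$, an explicit quadruple of integer symmetric $4\times 4$ matrices and verifying the type numerically via eigenvalue signs at the nodes. You anticipate this possibility in your last sentence; had your plan been carried out, it would constitute an alternative (and arguably more structural) proof of existence, but it would require careful control of the strata boundaries that the explicit list does not need.
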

 
 The proof given by Degtyarev and Itenberg in \cite{DI} is very indirect.
It  translates the problem into
the period domain for K3 surfaces via the Global Torelli Theorem,
and it proceeds by characterizing the various strata in $\mathcal{S}$
in terms of intersection lattices.
That approach cannot be used to
actually construct matrices $A(x)$ for
 quartic spectrahedra.

We here present a new proof  which
is elementary, computational, and self-contained.
For the if-direction of Theorem~\ref{thm:DI}, 
Section 2 exhibits twenty  matrices $A(x)$ which realize the allowed pairs of parameters $(\rho,\sigma)$.
That list generalizes Figure \ref{fig:redyellow}, which 
depicts the  classification of cubic
spectrahedra into two types,
$(\rho,\sigma) = (2,2) \, {\rm and} \,  (\rho,\sigma) = (4,4)$.

Our proof of the only-if direction uses
classical projective geometry, and it will
be given in Section 4. The idea is to build an
 explicit parametrization of the semi-algebraic
set $\mathcal{S}_{\rm spec}$ in terms of the 
ramification data of the projection of a symmetroid from one of its nodes.

The projection of a quartic surface from a node $p$ is a double cover of $\C \PP^2$
ramified along a sextic plane
 curve $R_p$  with a totally tangent conic $C_p$, the image of the tangent cone at $p$.  
This picture goes back to Cayley \cite{Ca1}
who used it to characterize nodes on quartic symmetroids. 
We extend Cayley's result to the real numbers, with focus on spectrahedra.
 
\begin{theorem} 
\label{thm:cayley}
Let $p$ be a node on a quartic surface $X$ in $\C \PP^3$. Then $X$ is a symmetroid if and only if the ramification curve is the union of two cubic curves $R_p=R_1\cup R_2$. Suppose that $X$ contains no line through $p$. Then $X$ is transversal if and only if $R_1$ and $R_2$ are smooth, intersect transversally in 9 points, and the conic $C_p$ is smooth. In this case, the surface $X$ and the point $p$ are real if and only if 
the curves $R_1\cup R_2$ and $C_p$ are real. Furthermore,
\begin{enumerate}
\item  $X$ is very real and $p$ is real if and only if $R_1\cup R_2$ is real and $C_p$ has a real point. 
\item  $X$ is spectrahedral and $p$ lies on the spectrahedron if and only if 
the cubics $R_1$ and $R_2$ 
are complex conjugates  and  the conic $C_p$ has a real point.
\item  $X$ is very real and $p$ is a real node that does not lie on the spectrahedron if and only if 
the cubics $R_1$ and $R_2$ are real  and the conic $C_p$ has a real point. 
\end{enumerate}
\end{theorem}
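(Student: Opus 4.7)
I place $p = (0{:}0{:}0{:}1)$ and write any quartic with a node at $p$ as $f = x_3^2 g_2 + x_3 g_3 + g_4$ with $g_i$ a form of degree $i$ in $(x_0,x_1,x_2)$; the branch sextic is then $R_p = V(g_3^2 - 4 g_2 g_4)$ and the tangent conic is $C_p = V(g_2)$, from which the unconditional reality statement in the theorem is immediate. For the main equivalence, suppose $f = \det A$ with $\operatorname{rank} A(p) = 2$; I would choose a basis so that $\ker A(p) = \langle e_3, e_4 \rangle$, yielding the block form
\[
A(x) \,=\, \begin{pmatrix} B(x) & C(x) \\ C(x)^T & D(x) \end{pmatrix}, \qquad B(x) = \tilde B(x_0,x_1,x_2) + x_3 A_3', \quad A_3' := B(p),
\]
with $A_3'$ invertible $2 \times 2$ and $C = \tilde C$, $D = \tilde D$ depending only on $(x_0,x_1,x_2)$. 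The Schur complement gives $f = (\det \tilde D) \cdot \det(E + x_3 A_3')$ with $E = \tilde B - \tilde C\,\tilde D^{-1} \tilde C^T$, and reading off coefficients of $x_3$ produces $g_2 = (\det A_3')(\det \tilde D)$ together with the key identity
\[
g_3^2 - 4 g_2 g_4 \,=\, (\det \tilde D)^2 \cdot \operatorname{disc}_\lambda\,\det(E + \lambda A_3').
\]
Over $\C$, a congruence brings $A_3'$ to $I$, reducing the inner discriminant to $(a-c)^2 + 4b^2 = (a-c + 2ib)(a-c - 2ib)$ with $E = \left(\begin{smallmatrix} a & b \\ b & c \end{smallmatrix}\right)$; clearing the $(\det \tilde D)^2$ from the denominators of $a,b,c$ turns each factor into a cubic form in $(x_0, x_1, x_2)$, producing the splitting $R_p = V(\ell_1) \cup V(\ell_2)$.

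For the converse I would reconstruct the pencil from the data $(\ell_1, \ell_2, g_2)$. Setting $\tilde a - \tilde c := (\ell_1 + \ell_2)/2$, $\tilde b := (\ell_1 - \ell_2)/(4i)$, and $\tilde a + \tilde c := g_3$ makes $\tilde a, \tilde b, \tilde c$ cubic forms that automatically satisfy $\tilde a \tilde c - \tilde b^2 = g_2 g_4$. Choosing any $2 \times 2$ symmetric $\tilde D$ of linear forms with $\det \tilde D = g_2$ (the classical determinantal representation of a conic), one solves for $\tilde B, \tilde C$ from the identity $(\det \tilde D) \tilde B - \tilde C\,\operatorname{adj}(\tilde D)\tilde C^T = \left(\begin{smallmatrix} \tilde a & \tilde b \\ \tilde b & \tilde c \end{smallmatrix}\right)$, and reassembles the $4 \times 4$ pencil. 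Once $X$ is known to be a symmetroid and (by hypothesis) contains no line through $p$, transversality is controlled by the nine residual nodes: by Bezout their images must lie on $R_1 \cap R_2$, and they are all honest rank-$2$ nodes exactly when $R_1$ and $R_2$ are smooth, meet transversally (so their nine intersection points are distinct), and $C_p$ is smooth (so the tangent cone at $p$ is nondegenerate); any extra rank-drop on $X$ would manifest either as a singularity of one $R_i$ or as an excess tangency between $R_p$ and $C_p$.

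The refinements (1)--(3) come from inspecting the real congruence class of $A_3'$. Over $\R$ the invertible symmetric $2 \times 2$ matrix $A_3'$ is congruent to $\pm I$ or to $\operatorname{diag}(1,-1)$. In the definite case the discriminant is the sum of squares $(a-c)^2 + 4b^2$, which factors over $\R$ only as complex conjugate cubics; this case is precisely $A(p) \succeq 0$ (or $\preceq 0$), so $p$ lies on the spectrahedron, giving (2). In the indefinite case $A_3' \sim \operatorname{diag}(1,-1)$ the discriminant becomes $(a+c)^2 - (2b)^2$ and factors into two real cubics; here $A(p)$ has signature $(1,1)$, so $p$ is a real rank-$2$ node off the spectrahedron, giving (3). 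Part (1) is the union of the two cases, and in all three the real-point condition on $C_p$ is the independent requirement that the conic $V(\det \tilde D)$ be non-empty over $\R$, without which no real $\tilde D$ can exist. I expect the principal obstacle to be making the converse reconstruction explicit: verifying that $\tilde D$ (and hence $\tilde B, \tilde C$) can always be chosen with linear (not merely rational) entries in $(x_0,x_1,x_2)$, and tracking carefully how the real-point condition on $C_p$ interacts with each signature of $A_3'$ to separate (1), (2), and (3).
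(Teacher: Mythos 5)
Your forward direction is correct and is a genuinely different (and in some ways cleaner) route to the same identity the paper uses. The paper puts $p=(1{:}0{:}0{:}0)$, conjugates so that $x_0$ sits only in the $(1,2)$ entry, and invokes the cofactor identity $q\Delta = F_{11}F_{22}-F_{12}^2$; you instead place $\ker A(p)$ in the $\{3,4\}$ coordinates, apply the Schur complement $f = (\det\tilde D)\cdot\det(E+x_3A_3')$, and read the split from the discriminant of a binary quadratic in $x_3$. The payoff of your version is that the three real refinements fall out transparently from the real congruence class of the invertible $2\times 2$ matrix $A_3'=A(p)|_{\operatorname{im}}$: definite gives the sum of squares $(a-c)^2+4b^2$ and hence conjugate cubics, indefinite gives the difference $(a+c)^2-(2b)^2$ and hence real cubics. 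The paper achieves the same case split with an explicit unitary conjugation of the $(2,0)$-node block, which is less conceptual. So up to this point your proposal is both correct and illuminating.

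The gap is precisely where you flag it, but it is bigger than a matter of bookkeeping. You propose to prove the converse by first choosing a symmetric $2\times 2$ linear $\tilde D$ with $\det\tilde D=g_2$, then \emph{solving} for a linear symmetric $\tilde B$ and a linear $\tilde C$ in the equation $(\det\tilde D)\,\tilde B - \tilde C\operatorname{adj}(\tilde D)\tilde C^T = \bigl(\begin{smallmatrix}\tilde a&\tilde b\\ \tilde b&\tilde c\end{smallmatrix}\bigr)$. A dimension count shows this is wildly overdetermined: the right-hand side lives in a $30$-dimensional space (three independent ternary cubics), while $(\tilde B,\tilde C)$ supply only $9+12=21$ parameters, and the one relation $\tilde a\tilde c-\tilde b^2=g_2g_4$ is nowhere near enough to close the gap. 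Solvability is therefore a nontrivial theorem, not a routine linear-algebra step, and it is exactly the content of the Cayley--Bacharach/Dixon argument that the paper deploys: one extracts the scheme $Z_\Delta\subset V(F_{11})$ of length $6$ with $2Z_\Delta = V(F_{11},\Delta)$, shows $Z_\Delta$ is not on a conic (this is the Cayley--Bacharach step), uses the $4$-dimensional space of cubics through $Z_\Delta$ to build a symmetric $4\times 4$ matrix $F$ of cubics with $\det F = c\,\Delta^3$ and $c\ne 0$, and then takes $M=\Delta^{-2}F^{\mathrm{adj}}$. Without this (or some equivalent), the converse is unproved. A second, smaller gap: in the converse of statement~(2), given conjugate cubics $\ell_1=\bar\ell_2$ you only know $\pm\ell_1\ell_2 = g_3^2-4g_2g_4$ up to a real rescaling; the paper has to rule out the wrong sign by observing that $-\ell_1\ell_2=g_3^2-4g_2g_4$ would force $-g_2g_4$ to be a sum of squares, contradicting $C_p$ having a real point. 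Your sketch does not address this sign, and it does not come for free from the Schur-complement picture. Finally, the transversality equivalence (``$X$ transversal iff $R_1,R_2$ smooth, meet transversally, $C_p$ smooth'') is asserted with a heuristic but not proved; the paper grounds it in Corollary~\ref{cor:ninepoints}, which relies on first having the converse direction in hand.
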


Theorem \ref{thm:cayley} characterizes the location
of generic points in the various strata of (\ref{eq:inclusions}).
Special points are obtained as limits of these.
A special symmetroid may contain a line that
passes through a node $p$, and the
cubics $R_1, R_2$ may be singular.
For instance, the Kummer symmetroid
shown in  Figure \ref{fig:kummer}
contains $16$ lines
and each cubic $R_i$ factors into three lines.

\smallskip

The paper is organized as follows.
Section 2 contains our gallery of transversal quartic spectrahedra.
Section 3 develops the projective geometry of (real) quartic symmetroids 
and their projections from nodes. It furnishes a self-contained proof
of Theorem~\ref{thm:cayley}.
Section 4 completes the proof of Theorem \ref{thm:DI}
and develops a further extension of 
Theorem~\ref{thm:cayley} in terms of interlacing cubics.
Section 5 serves ``spectrahedral treats'':
  various families of quartics seen
in the convex algebraic geometry literature.
We explain how they fit into our general theory.

\section{Twenty Types of Transversal Spectrahedra} 

A transversal quartic symmetroid is a surface in $\C \PP^3$ with $10$ nodes. We are primarily interested in real symmetroids, whose defining polynomial is defined over $\R$.
A transversal quartic spectrahedron is a convex body in $\R \PP^3$ whose
(topological) boundary has a transversal quartic symmetroid as its Zariski closure.
The {\em type} $(\rho, \sigma)$ of a transversal quartic spectrahedron
records the number $\rho$ of real nodes of the symmetroid
and the number $\sigma$ of nodes that lie on the spectrahedron.
Since the spectrahedron's boundary is part of the real symmetroid, we always have
 $\sigma \leq \rho$.
 
 \vskip -0.5cm
\begin{figure}[h]
\begin{center}
\includegraphics[height=2.25in]{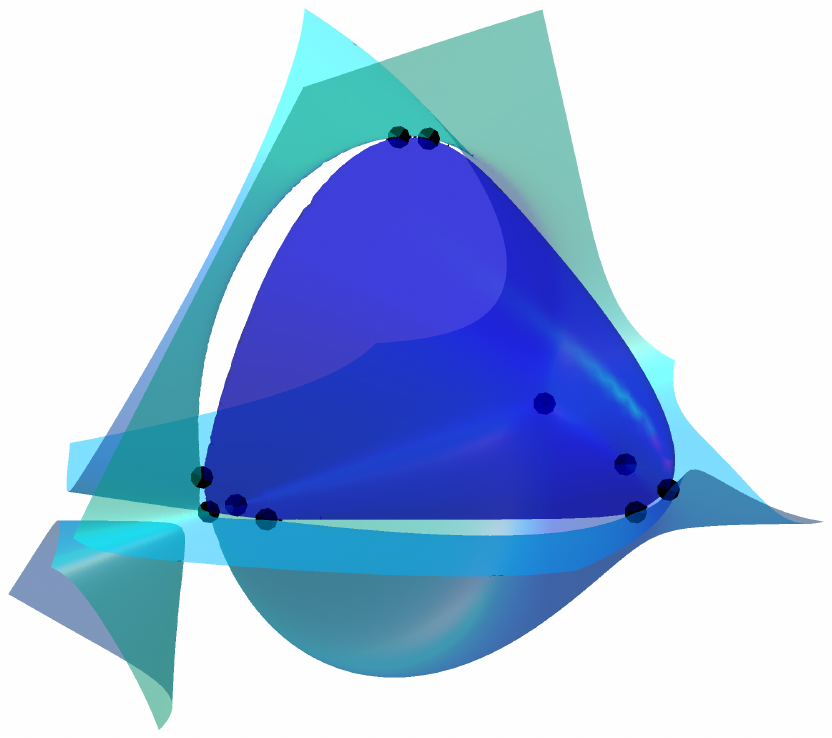} \quad  \quad  \includegraphics[height=2.25in]{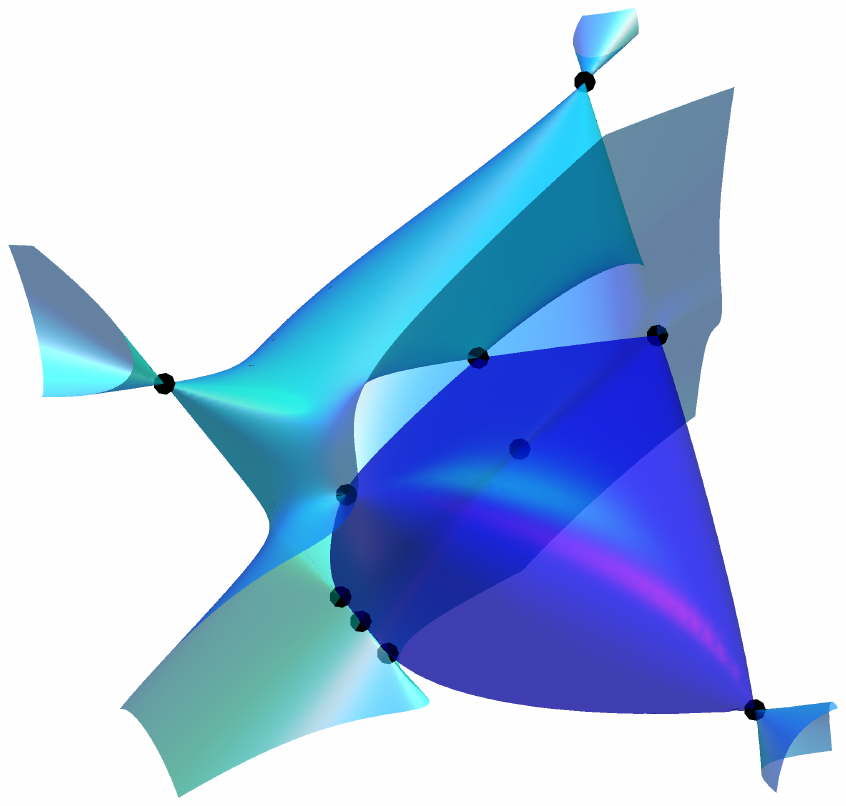} 
\vskip -0.5cm
\caption{ \label{fig:ex_10_10}
Two quartic spectrahedra, of type $(\rho,\sigma) = (10,10)$ and of type
$(\rho,\sigma) = (10,8)$.}
\end{center}
\vskip -0.7cm
\end{figure}

Figure \ref{fig:ex_10_10} illustrates the situation.
Spectrahedral symmetroids have one distinguished
connected component in their complement in $\R \PP^3$,
namely the spectrahedron itself. This is a convex body
which serves as the set of feasible points in
{\em semidefinite programming} (SDP) \cite{BPT}.
Both symmetroids in Figure \ref{fig:ex_10_10} have all $10$ nodes real.
The one on the left has all $10$ nodes on the spectrahedron.
The symmetroid  on the right has $8$ nodes on the spectrahedron
and $2$ nodes off the spectrahedron.
Concretely, the spectrahedron on the left is the compact convex set 
defined by requiring that the
following matrix $A(x)$ is positive semidefinite:
$$
{\scriptsize
\begin{bmatrix}
45 x_0 + 40 x_1 + 65 x_2 + 72 x_3 \! & \!    33 x_0 + 50 x_1 + 7 x_2 - 60 x_3 \! & \! -84 x_0 + 12 x_1 + 3 x_2 - 54 x_3 \! & \! -21 x_0 + 16 x_1 + 54 x_2 -  18 x_3\\
33 x_0 + 50 x_1 + 7 x_2 - 60 x_3 \! & \! 82 x_0 + 85 x_1 + 2 x_2 + 82 x_3 \! & \! -99 x_0 + 12 x_1 + 6 x_2 + 25 x_3 \! & \! -46 x_0 + 41 x_1 + 18 x_2 +   51 x_3\\
-84x_0 + 12 x_1 + 3 x_2 - 54 x_3 \! & \! -99 x_0 + 12 x_1 + 6 x_2 + 25 x_3 \! & \!  181 x_0 + 4 x_1 + 26 x_2 + 53 x_3 \! & \! 59 x_0 + 2 x_1 + 58 x_2 - 9 x_3\\
-21 x_0 + 16 x_1 + 54 x_2 - 18 x_3 \! & \! -46 x_0 + 41 x_1 + 18 x_2 + 51 x_3 \! & \! 59 x_0 + 2 x_1 + 58 x_2 - 9 x_3 \! & \! 26 x_0 + 26 x_1 + 164 x_2 + 45 x_3
\end{bmatrix}.}
$$

In the optimization literature, this constraint is denoted $A(x) \succeq 0$ and is 
called a linear matrix inequality (LMI).
Maximizing a linear function over
the spectrahedron, the optimal solution $A(x)$ has
either rank $3$ and is attained at a smooth point
in the boundary, or it has rank $2$ and is attained at one
of the nodes. For the types with $\sigma = 0$, 
none of the nodes lie in the boundary 
of the spectrahedron. In this case,
the rank of the optimal solution
to the SDP is always $3$, for any choice of linear cost function.
Here is a specimen.

\begin{figure}[h]
\vskip -0.3cm
\begin{center}
\parbox[m]{3in}{\includegraphics[height=3in]{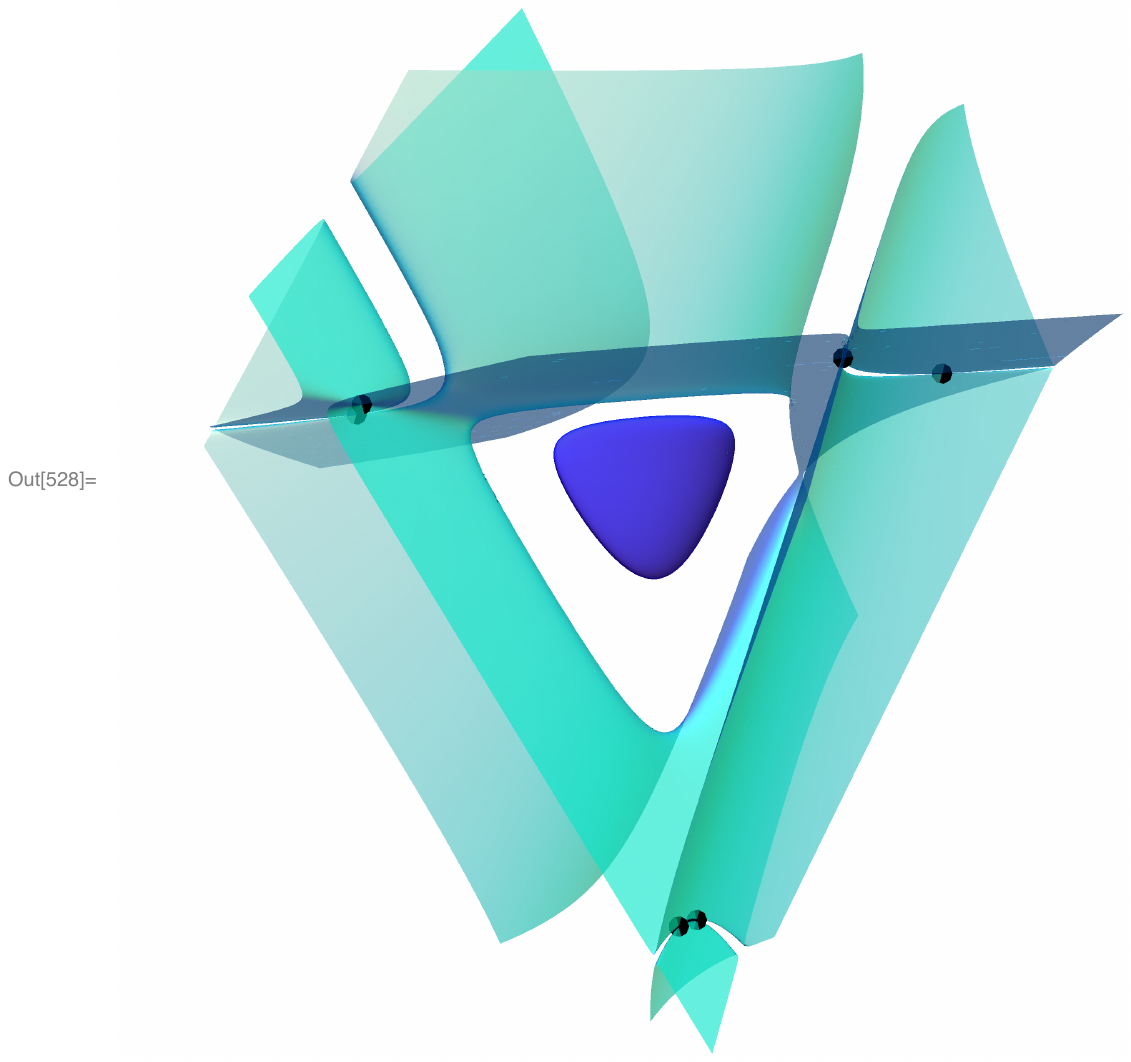} }
\parbox[m]{2in}{\includegraphics[width=2in]{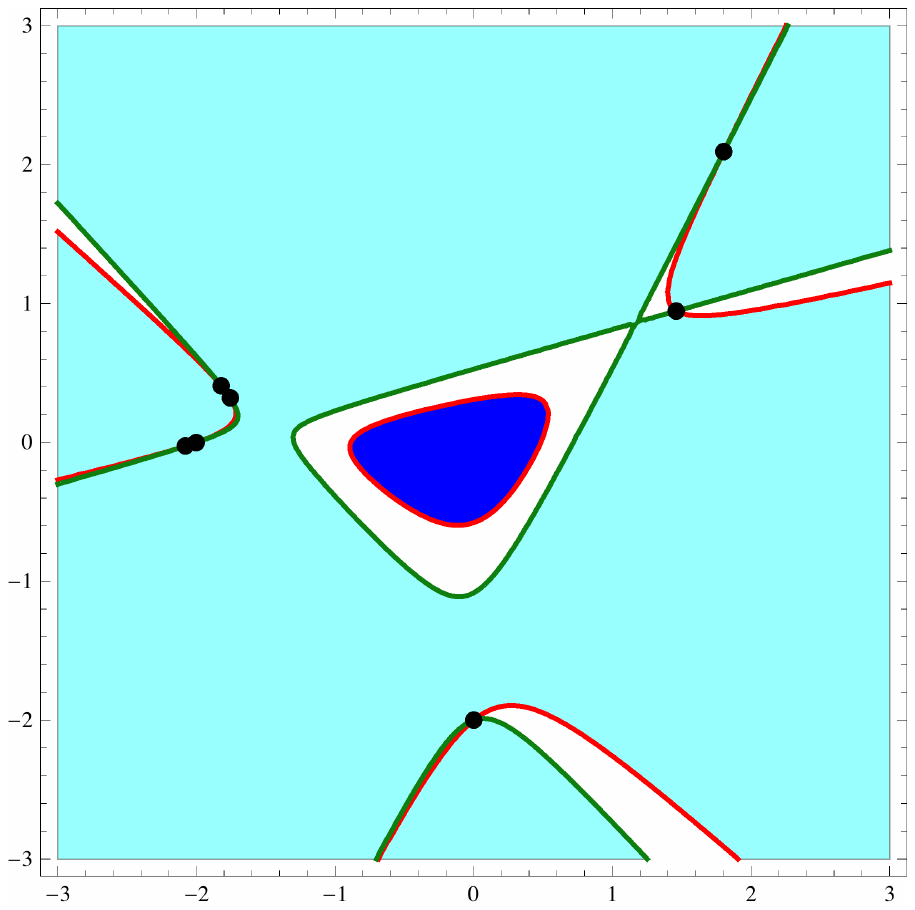} }
\end{center}
\vskip -0.5cm
\caption{\label{fig:ex10_0}
A quartic spectrahedron of type $(\rho,\sigma) = (10,0)$, and its projection from a node.
}
\end{figure}
\vskip -0.5cm

\begin{example}
Figure \ref{fig:ex10_0} shows a quartic
symmetroid with $10$ real nodes.
None of them lie on the spectrahedron.
The topological boundary of the spectrahedron
is a smooth $2$-sphere $\mathbb{S}^2$.
As argued in Figure \ref{fig:redyellow} and
Remark \ref{cubicspec},
 no such smooth spectrahedra exist for $n=3$. 
$\hfill \diamondsuit$
\end{example}

Here is one more contribution to our picture gallery.
Figure \ref{fig:ex_2_2} shows a quartic spectrahedron that has
two nodes in its boundary, and no further real nodes on the symmetroid.
In this diagram, the real part of the symmetroid is a compact surface
in affine $3$-space.
\begin{figure}[h]
\begin{center}
\vskip -0.2cm
\includegraphics[scale=0.9]{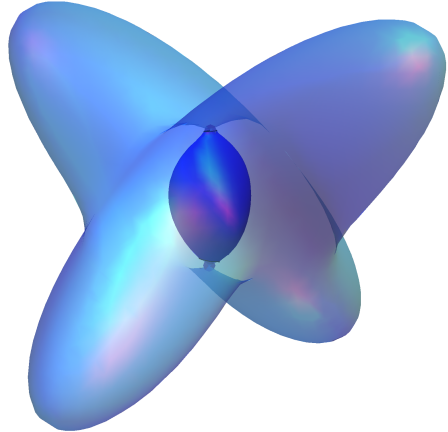}  
\vskip -0.4cm
\caption{ \label{fig:ex_2_2}
A quartic symmetroid of type $(\rho,\sigma) = (2,2)$.}
\end{center}
\end{figure}
\vskip -0.4cm

We now establish the existence part of the Degtyarev-Itenberg theorem.

\begin{proof}[Proof of the if direction in Theorem \ref{thm:DI}]
The constraints $0 \leq \sigma \leq \rho \leq 10$ and $\rho \geq 2$
allow for $20$ solutions $(\rho,\sigma)$ among even integers.
In the following table we list the twenty pairs $(\rho, \sigma)$ followed
by four symmetric $4 \times 4$-matrices   $A_0,A_1,A_2,A_3$ 
with integer entries. Each  quadruple specifies a matrix
$A(x)$ as in (\ref{eq:AxAx}) whose symmetroid $f = {\rm det}(A(x))$
is transversal and has a non-empty spectrahedron $S(f)$.
To verify the correctness of the list,
one computes the ten complex nodes, one 
checks that $\rho$ of them are real, and
one examines how many lie on the
spectrahedron. The latter test is done
by computing the eigenvalues of the
matrix $A(x)$ at each node $x$.
If all eigenvalues have the same sign 
then $x$ is on the spectrahedron.
The list starts with $(\rho,\sigma) = (2,2)$, as in Figure \ref{fig:ex_2_2},
and ends with $(\rho, \sigma) = (10,0)$, as in Figure~\ref{fig:ex10_0}.

{ \footnotesize
$$
\begin{matrix}
(2,2) \, : \! & \! 
\begin{bmatrix}
 3 \! & \! 4 \! & \! 1 \! & \! -4 \\
 4 \! & \! 14 \! & \! -6 \! & \! -10 \\
 1 \! & \! -6 \! & \! 9 \! & \! 2 \\
 -4 \! & \! -10 \! & \! 2 \! & \! 8 \\
\end{bmatrix} \! & \! \begin{bmatrix}
 11 \! & \! 0 \! & \! 2 \! & \! 2 \\
 0 \! & \! 6 \! & \! -1 \! & \! 4 \\
 2 \! & \! -1 \! & \! 6 \! & \! 2 \\
 2 \! & \! 4 \! & \! 2 \! & \! 4 \\
\end{bmatrix} \! & \! \begin{bmatrix}
 17 \! & \! -3 \! & \! 2 \! & \! 9 \\
 -3 \! & \! 6 \! & \! -4 \! & \! 1 \\
 2 \! & \! -4 \! & \! 13 \! & \! 10 \\
 9 \! & \! 1 \! & \! 10 \! & \! 17 \\
\end{bmatrix} \! & \! \begin{bmatrix}
 9 \! & \! -3 \! & \! 9 \! & \! 3 \\
 -3 \! & \! 10 \! & \! 6 \! & \! -7 \\
 9 \! & \! 6 \! & \! 18 \! & \! -3 \\
 3 \! & \! -7 \! & \! -3 \! & \! 5 
\end{bmatrix} \smallskip \\
 (4 , 4) 
\, : \! & \!
\begin{bmatrix}
 18 \! & \! 3 \! & \! 9 \! & \! 6 \\
 3 \! & \! 5 \! & \! -1 \! & \! -3 \\
 9 \! & \! -1 \! & \! 13 \! & \! 7 \\
 6 \! & \! -3 \! & \! 7 \! & \! 6 \\
\end{bmatrix} \! & \! \begin{bmatrix}
 17 \! & \! -10 \! & \! 4 \! & \! 3 \\
 -10 \! & \! 14 \! & \! -1 \! & \! -3 \\
 4 \! & \! -1 \! & \! 5 \! & \! -4 \\
 3 \! & \! -3 \! & \! -4 \! & \! 6 \\
\end{bmatrix} \! & \! \begin{bmatrix}
 8 \! & \! 6 \! & \! 10 \! & \! 10 \\
 6 \! & \! 18 \! & \! 6 \! & \! 15 \\
 10 \! & \! 6 \! & \! 14 \! & \! 9 \\
 10 \! & \! 15 \! & \! 9 \! & \! 22 \\
\end{bmatrix} \! & \! \begin{bmatrix}
 8 \! & \! -4 \! & \! 8 \! & \! 0 \\
 -4 \! & \! 10 \! & \! -4 \! & \! 0 \\
 8 \! & \! -4 \! & \! 8 \! & \! 0 \\
 0 \! & \! 0 \! & \! 0 \! & \! 0 
\end{bmatrix} \smallskip \\

\end{matrix}
$$
$$
\begin{matrix}

 (6 , 6 ) \, : \! & \!
\begin{bmatrix}
 10 \! & \! 8 \! & \! 2 \! & \! 6 \\
 8 \! & \! 14 \! & \! 0 \! & \! 2 \\
 2 \! & \! 0 \! & \! 5 \! & \! 7 \\
 6 \! & \! 2 \! & \! 7 \! & \! 11 \\
\end{bmatrix} \! & \! \begin{bmatrix}
 11 \! & \! -6 \! & \! 10 \! & \! 9 \\
 -6 \! & \! 10 \! & \! -5 \! & \! -5 \\
 10 \! & \! -5 \! & \! 14 \! & \! 11 \\
 9 \! & \! -5 \! & \! 11 \! & \! 9 \\
\end{bmatrix} \! & \! \begin{bmatrix}
 6 \! & \! 2 \! & \! 6 \! & \! -5 \\
 2 \! & \! 9 \! & \! 2 \! & \! 0 \\
 6 \! & \! 2 \! & \! 6 \! & \! -5 \\
 -5 \! & \! 0 \! & \! -5 \! & \! 5 \\
\end{bmatrix} \! & \! \begin{bmatrix}
 8 \! & \! 6 \! & \! 2 \! & \! -2 \\
 6 \! & \! 9 \! & \! 9 \! & \! 6 \\
 2 \! & \! 9 \! & \! 13 \! & \! 12 \\
 -2 \! & \! 6 \! & \! 12 \! & \! 13 \\
\end{bmatrix} \smallskip \\

 (8 , 8 ) 
\, : \! & \!
\begin{bmatrix}
 5 \! & \! 3 \! & \! -3 \! & \! -4 \\
 3 \! & \! 6 \! & \! -3 \! & \! -2 \\
 -3 \! & \! -3 \! & \! 6 \! & \! 4 \\
 -4 \! & \! -2 \! & \! 4 \! & \! 4 \\
\end{bmatrix} \! & \! \begin{bmatrix}
 19 \! & \! 10 \! & \! 12 \! & \! 17 \\
 10 \! & \! 14 \! & \! 10 \! & \! 7 \\
 12 \! & \! 10 \! & \! 10 \! & \! 11 \\
 17 \! & \! 7 \! & \! 11 \! & \! 17 \\
\end{bmatrix} \! & \! \begin{bmatrix}
 5 \! & \! 1 \! & \! 3 \! & \! -3 \\
 1 \! & \! 5 \! & \! -7 \! & \! -1 \\
 3 \! & \! -7 \! & \! 22 \! & \! 7 \\
 -3 \! & \! -1 \! & \! 7 \! & \! 10 \\
\end{bmatrix} \! & \! \begin{bmatrix}
 1 \! & \! 1 \! & \! 0 \! & \! 2 \\
 1 \! & \! 1 \! & \! 0 \! & \! 2 \\
 0 \! & \! 0 \! & \! 4 \! & \! 4 \\
 2 \! & \! 2 \! & \! 4 \! & \! 8 \\
\end{bmatrix} \smallskip \\

 (10 , 10 )
\, : \! & \!
\begin{bmatrix}
 18 \! & \! 6 \! & \! 6 \! & \! -6 \\
 6 \! & \! 2 \! & \! 2 \! & \! -2 \\
 6 \! & \! 2 \! & \! 2 \! & \! -2 \\
 -6 \! & \! -2 \! & \! -2 \! & \! 4 \\
\end{bmatrix} \! & \! \begin{bmatrix}
 4 \! & \! -6 \! & \! 6 \! & \! 4 \\
 -6 \! & \! 13 \! & \! -9 \! & \! -8 \\
 6 \! & \! -9 \! & \! 9 \! & \! 6 \\
 4 \! & \! -8 \! & \! 6 \! & \! 5 \\
\end{bmatrix} \! & \! \begin{bmatrix}
 1 \! & \! 0 \! & \! -3 \! & \! 0 \\
 0 \! & \! 4 \! & \! 0 \! & \! 6 \\
 -3 \! & \! 0 \! & \! 9 \! & \! 0 \\
 0 \! & \! 6 \! & \! 0 \! & \! 9 \\
\end{bmatrix} \! & \! \begin{bmatrix}
 9 \! & \! -3 \! & \! 0 \! & \! 0 \\
 -3 \! & \! 10 \! & \! 9 \! & \! -6 \\
 0 \! & \! 9 \! & \! 9 \! & \! -6 \\
 0 \! & \! -6 \! & \! -6 \! & \! 4 \\
\end{bmatrix} \smallskip \\
( 2 , 0 )
\, : \! & \!
\begin{bmatrix}
 20 \! & \! 6 \! & \! \!-14 \! & \! -4 \\
 6 \! & \! 18 \! & \! 3 \! & \! -12 \\
 -14 \! & \! 3 \! & \! 17 \! & \! -2 \\
 -4 \! & \!\! -12 \! & \! -2 \! & \! 8 \\
\end{bmatrix} \! & \! \begin{bmatrix}
 54 \! & \!\! -27 \! & \! 16 \! & \! 12 \\
 -27 \! & \! 18 \! & \! -2 \! & \!\!-15 \\
 16 \! & \! -2 \! & \! 20 \! & \!\! -10 \\
 12 \! & \!\! -15 \! & \! -10 \! & \! 21 \\
\end{bmatrix} \! & \! \begin{bmatrix}
 42 \! & \! -8 \! & \! 9 \! & \! -3 \\
 -8 \! & \! 10 \! & \! 5 \! & \! -11 \\
 9 \! & \! 5 \! & \! 29 \! & \! 7 \\
 -3 \! & \! -11 \! & \! 7 \! & \! 29 \\
\end{bmatrix} \! & \! \begin{bmatrix}
 0 \! & \! 9 \! & \! 3 \! & \! -3 \\
 9 \! & \! -9 \! & \! -6 \! & \! 6 \\
 3 \! & \! -6 \! & \! -3 \! & \! 3 \\
 -3 \! & \! 6 \! & \! 3 \! & \! -3 \\
\end{bmatrix} \smallskip \\
( 4 , 2)
\, : \! & \!
\begin{bmatrix}
 9 \! & \! -4 \! & \! 1 \! & \! 1 \\
 -4 \! & \! 5 \! & \! -3 \! & \! -2 \\
 1 \! & \! -3 \! & \! 3 \! & \! 1 \\
 1 \! & \! -2 \! & \! 1 \! & \! 1 \\
\end{bmatrix} \! & \! \begin{bmatrix}
 6 \! & \! 1 \! & \! 3 \! & \! 4 \\
 1 \! & \! 5 \! & \! 5 \! & \! 2 \\
 3 \! & \! 5 \! & \! 6 \! & \! 2 \\
 4 \! & \! 2 \! & \! 2 \! & \! 8 \\
\end{bmatrix} \! & \! \begin{bmatrix}
 8 \! & \! 2 \! & \! -6 \! & \! 4 \\
 2 \! & \! 5 \! & \! 1 \! & \! 3 \\
 -6 \! & \! 1 \! & \! 6 \! & \! -2 \\
 4 \! & \! 3 \! & \! -2 \! & \! 3 \\
\end{bmatrix} \! & \! \begin{bmatrix}
 -4 \! & \! 4 \! & \! -2 \! & \! 2 \\
 4 \! & \! 0 \! & \! 0 \! & \! -2 \\
 -2 \! & \! 0 \! & \! 0 \! & \! 1 \\
 2 \! & \! -2 \! & \! 1 \! & \! -1 \\
\end{bmatrix} \\( 6 , 4) 
\, : \! & \!
\begin{bmatrix}
 6 \! & \! -1 \! & \! 5 \! & \! 5 \\
 -1 \! & \! 2 \! & \! 1 \! & \! -3 \\
 5 \! & \! 1 \! & \! 6 \! & \! 2 \\
 5 \! & \! -3 \! & \! 2 \! & \! 9 \\
\end{bmatrix} \! & \! \begin{bmatrix}
 5 \! & \! -5 \! & \! 5 \! & \! -3 \\
 -5 \! & \! 6 \! & \! -5 \! & \! 5 \\
 5 \! & \! -5 \! & \! 5 \! & \! -3 \\
 -3 \! & \! 5 \! & \! -3 \! & \! 9 \\
\end{bmatrix} \! & \! \begin{bmatrix}
 6 \! & \! -3 \! & \! 5 \! & \! 2 \\
 -3 \! & \! 5 \! & \! -3 \! & \! 2 \\
 5 \! & \! -3 \! & \! 9 \! & \! -4 \\
 2 \! & \! 2 \! & \! -4 \! & \! 9 \\
\end{bmatrix} \! & \! \begin{bmatrix}
 0 \! & \! -2 \! & \! -2 \! & \! 0 \\
 -2 \! & \! 1 \! & \! 2 \! & \! 1 \\
 -2 \! & \! 2 \! & \! 3 \! & \! 1 \\
 0 \! & \! 1 \! & \! 1 \! & \! 0 \\
\end{bmatrix} \smallskip \\
( 8 , 6 )
\, : \! & \!
\begin{bmatrix}
 4 \! & \! 0 \! & \! 4 \! & \! -2 \\
 0 \! & \! 5 \! & \! -2 \! & \! 5 \\
 4 \! & \! -2 \! & \! 8 \! & \! -4 \\
 -2 \! & \! 5 \! & \! -4 \! & \! 6 \\
\end{bmatrix} \! & \! \begin{bmatrix}
 2 \! & \! 3 \! & \! -1 \! & \! -1 \\
 3 \! & \! 6 \! & \! -1 \! & \! -4 \\
 -1 \! & \! -1 \! & \! 6 \! & \! -3 \\
 -1 \! & \! -4 \! & \! -3 \! & \! 6 \\
\end{bmatrix} \! & \! \begin{bmatrix}
 6 \! & \! 2 \! & \! 0 \! & \! 1 \\
 2 \! & \! 8 \! & \! 4 \! & \! -2 \\
 0 \! & \! 4 \! & \! 8 \! & \! -2 \\
 1 \! & \! -2 \! & \! -2 \! & \! 1 \\
\end{bmatrix} \! & \! \begin{bmatrix}
 2 \! & \! -3 \! & \! 0 \! & \! 1 \\
 -3 \! & \! 5 \! & \! 0 \! & \! 0 \\
 0 \! & \! 0 \! & \! 0 \! & \! 0 \\
 1 \! & \! 0 \! & \! 0 \! & \! 5 \\
\end{bmatrix} \smallskip \\
( 10 , 8 )
\, : \! & \!
\begin{bmatrix}
 5 \! & \! -1 \! & \! -1 \! & \! 4 \\
 -1 \! & \! 6 \! & \! -3 \! & \! 5 \\
 -1 \! & \! -3 \! & \! 2 \! & \! -4 \\
 4 \! & \! 5 \! & \! -4 \! & \! 9 \\
\end{bmatrix} \! & \! \begin{bmatrix}
 8 \! & \! 0 \! & \! 0 \! & \! -4 \\
 0 \! & \! 1 \! & \! 0 \! & \! -1 \\
 0 \! & \! 0 \! & \! 2 \! & \! 0 \\
 -4 \! & \! -1 \! & \! 0 \! & \! 3 \\
\end{bmatrix} \! & \! \begin{bmatrix}
 6 \! & \! 5 \! & \! 1 \! & \! -2 \\
 5 \! & \! 9 \! & \! -3 \! & \! -4 \\
 1 \! & \! -3 \! & \! 6 \! & \! 4 \\
 -2 \! & \! -4 \! & \! 4 \! & \! 4 \\
\end{bmatrix} \! & \! \begin{bmatrix}
 8 \! & \! 0 \! & \! 0 \! & \! -4 \\
 0 \! & \! 8 \! & \! 4 \! & \! 4 \\
 0 \! & \! 4 \! & \! 2 \! & \! 2 \\
 -4 \! & \! 4 \! & \! 2 \! & \! 4 \\
\end{bmatrix}  \smallskip \\
( 4 , 0 )
\, : \! & \!
\begin{bmatrix}
 21 \! & \! 10 \! & \! 1 \! & \! -6 \\
 10 \! & \! 10 \! & \! 0 \! & \! -1 \\
 1 \! & \! 0 \! & \! 2 \! & \! -3 \\
 -6 \! & \! -1 \! & \! -3 \! & \! 6 \\
\end{bmatrix} \! & \! \begin{bmatrix}
 0 \! & \! 6 \! & \! -6 \! & \! 2 \\
 6 \! & \! 3 \! & \! 0 \! & \! -4 \\
 -6 \! & \! 0 \! & \! -3 \! & \! 5 \\
 2 \! & \! -4 \! & \! 5 \! & \! -3 \\
\end{bmatrix} \! & \! \begin{bmatrix}
 0 \! & \! 0 \! & \! 0 \! & \! 2 \\
 0 \! & \! 0 \! & \! 0 \! & \! -1 \\
 0 \! & \! 0 \! & \! 0 \! & \! -1 \\
 2 \! & \! -1 \! & \! -1 \! & \! 5 \\
\end{bmatrix} \! & \! \begin{bmatrix}
 0 \! & \! 3 \! & \! -1 \! & \! 1 \\
 3 \! & \! -3 \! & \! 8 \! & \! -5 \\
 -1 \! & \! 8 \! & \! -5 \! & \! 4 \\
 1 \! & \! -5 \! & \! 4 \! & \! -3 \\
\end{bmatrix} \smallskip \\
( 6 , 2 )
\, : \! & \!
\begin{bmatrix}
 7 \! & \! -1 \! & \! 5 \! & \! 2 \\
 -1 \! & \! 5 \! & \! -1 \! & \! 5 \\
 5 \! & \! -1 \! & \! 4 \! & \! 1 \\
 2 \! & \! 5 \! & \! 1 \! & \! 7 \\
\end{bmatrix} \! & \! \begin{bmatrix}
 -1 \! & \! -2 \! & \! 1 \! & \! -2 \\
 -2 \! & \! -3 \! & \! 2 \! & \! -6 \\
 1 \! & \! 2 \! & \! -1 \! & \! 2 \\
 -2 \! & \! -6 \! & \! 2 \! & \! 0 \\
\end{bmatrix} \! & \! \begin{bmatrix}
 4 \! & \! 4 \! & \! 2 \! & \! -2 \\
 4 \! & \! 0 \! & \! 4 \! & \! -2 \\
 2 \! & \! 4 \! & \! 0 \! & \! -1 \\
 -2 \! & \! -2 \! & \! -1 \! & \! 1 \\
\end{bmatrix} \! & \! \begin{bmatrix}
 -1 \! & \! 1 \! & \! 2 \! & \! 1 \\
 1 \! & \! -1 \! & \! -2 \! & \! -1 \\
 2 \! & \! -2 \! & \! -3 \! & \! -1 \\
 1 \! & \! -1 \! & \! -1 \! & \! 0 \\
\end{bmatrix} \smallskip \\

( 8 , 4 )
\, : \! & \!
\begin{bmatrix}
 16 \! & \! -4 \! & \!\! -16 \! & \! 10 \\
 -4 \! & \! 18 \! & \! 0 \! & \! \!-13 \\
 -16 \! & \! 0 \! & \! 20 \! & \!\! -9 \\
 10 \! & \!\! -13 \! & \!\! -9 \! & \! 19 \\
\end{bmatrix} \! & \! \begin{bmatrix}
 0 \! & \! 1 \! & \! -1 \! & \! 0 \\
 1 \! & \! -5 \! & \! 6 \! & \! 1 \\
 -1 \! & \! 6 \! & \! -7 \! & \! -1 \\
 0 \! & \! 1 \! & \! -1 \! & \! 0 \\
\end{bmatrix} \! & \! \begin{bmatrix}
 0 \! & \! \!-16 \! & \! 0 \! & \! -8 \\
 -16 \! & \! 0 \! & \! 16 \! & \! -16 \\
 0 \! & \! 16 \! & \! 0 \! & \! 8 \\
 -8 \! & \!\! -16 \! & \! 8 \! & \! -16 \\
\end{bmatrix} \! & \! \begin{bmatrix}
 7 \! & \! 9 \! & \! 16 \! & \! 3 \\
 9 \! & \! -9 \! & \! -12 \! & \! 9 \\
 16 \! & \! -12 \! & \! -15 \! & \! 15 \\
 3 \! & \! 9 \! & \! 15 \! & \! 0 \\
\end{bmatrix} \smallskip \\

\end{matrix}
$$
$$
\begin{matrix}

( 10 , 6 )
\, : \! & \!
\begin{bmatrix}
 18 \! & \! \! -13 \! & \! 15 \! & \! 1 \\
 -13 \! & \! 22 \! & \! 2 \! & \!\!\! -16 \\
 15 \! & \! 2 \! & \! 30 \! & \!\!\! -20 \\
 1 \! & \!\! -16 \! & \!\! -20 \! & \! 30 \\
\end{bmatrix} \! & \! \begin{bmatrix}
 -15 \! & \! 7 \! & \! 8 \! & \! 5 \\
 7 \! & \!\!\!-3 \! & \!\! -4 \! & \! -3 \\
 8 \! & \!\!\!-4 \! & \!\! -4 \! & \! -2 \\
 5 \! & \!\!\! -3 \! & \!\! -2 \! & \! 0 \\
\end{bmatrix} \! & \! \begin{bmatrix}
 1 \! & \! 0 \! & \! 1 \! & \! -3 \\
 0 \! & \! 0 \! & \! 0 \! & \! 0 \\
 1 \! & \! 0 \! & \! -8 \! & \! -15 \\
 -3 \! & \! 0 \! & \! -15 \! & \! -7 \\
\end{bmatrix} \! & \! \begin{bmatrix}
 -15 \! & \! 0 \! & \! -6 \! & \! 2 \\
 0 \! & \! 15 \! & \! 6 \! & \! 8 \\
 -6 \! & \! 6 \! & \! 0 \! & \! 4 \\
 2 \! & \! 8 \! & \! 4 \! & \! 4 \\
\end{bmatrix} \\

( 6 , 0 )
\, : \! & \!
\begin{bmatrix}
 3 \! & \! 6 \! & \! -4 \! & \! -4 \\
 6 \! & \! 13 \! & \! -5 \! & \! -5 \\
 -4 \! & \! -5 \! & \! 19 \! & \! 20 \\
 -4 \! & \! -5 \! & \! 20 \! & \! 23 \\
\end{bmatrix} \! & \! \begin{bmatrix}
 0 \! & \! -1 \! & \! -3 \! & \! 0 \\
 -1 \! & \! 3 \! & \! 6 \! & \! 0 \\
 -3 \! & \! 6 \! & \! 9 \! & \! 0 \\
 0 \! & \! 0 \! & \! 0 \! & \! 0 \\
\end{bmatrix} \! & \! \begin{bmatrix}
 8 \! & \! 2 \! & \! -2 \! & \! 2 \\
 2 \! & \! -4 \! & \! -2 \! & \! 2 \\
 -2 \! & \! -2 \! & \! 0 \! & \! 0 \\
 2 \! & \! 2 \! & \! 0 \! & \! 0 \\
\end{bmatrix} \!\! & \!\! \begin{bmatrix}
 1 \! & \! -2 \! & \! 1 \! & \! 3 \\
 -2 \! & \! -5 \! & \!\! -11 \! & \!\! -15 \\
 1 \! & \!\! -11 \! & \! -8 \! & \! -6 \\
 3 \! & \!\! -15 \! & \! -6 \! & \! 0 \\
\end{bmatrix} \smallskip \\
( 8,  2 )
\, : \! & \!
\begin{bmatrix}
 3 \! & \! -3 \! & \! 3 \! & \! -1 \\
 -3 \! & \! 4 \! & \! -3 \! & \! 2 \\
 3 \! & \! -3 \! & \! 5 \! & \! 0 \\
 -1 \! & \! 2 \! & \! 0 \! & \! 2 \\
\end{bmatrix} \! & \! \begin{bmatrix}
 -1 \! & \! 1 \! & \! -1 \! & \! -2 \\
 1 \! & \! 0 \! & \! 0 \! & \! 0 \\
 -1 \! & \! 0 \! & \! 0 \! & \! 0 \\
 -2 \! & \! 0 \! & \! 0 \! & \! 0 \\
\end{bmatrix} \! & \! \begin{bmatrix}
 0 \! & \! 0 \! & \! -1 \! & \! -2 \\
 0 \! & \! 0 \! & \! 0 \! & \! 0 \\
 -1 \! & \! 0 \! & \! 1 \! & \! 0 \\
 -2 \! & \! 0 \! & \! 0 \! & \! -4 \\
\end{bmatrix} \! & \! \begin{bmatrix}
 -1 \! & \! 1 \! & \! 1 \! & \! 0 \\
 1 \! & \! 3 \! & \! -1 \! & \! 2 \\
 1 \! & \! -1 \! & \! -1 \! & \! 0 \\
 0 \! & \! 2 \! & \! 0 \! & \! 1 \\
\end{bmatrix} \smallskip  \\
( 10 , 4 )
\, : \! & \!
\begin{bmatrix}
 5 \! & \! -1 \! & \! -3 \! & \! 1 \\
 -1 \! & \! 2 \! & \! 2 \! & \! 0 \\
 -3 \! & \! 2 \! & \! 4 \! & \! -1 \\
 1 \! & \! 0 \! & \! -1 \! & \! 3 \\
\end{bmatrix} \! & \! \begin{bmatrix}
 0 \! & \! 0 \! & \! 0 \! & \! 0 \\
 0 \! & \! -4 \! & \! -4 \! & \! -2 \\
 0 \! & \! -4 \! & \! -4 \! & \! -2 \\
 0 \! & \! -2 \! & \! -2 \! & \! 0 \\
\end{bmatrix} \! & \! \begin{bmatrix}
 0 \! & \! 4 \! & \! -4 \! & \! -6 \\
 4 \! & \! 0 \! & \! 2 \! & \! 1 \\
 -4 \! & \! 2 \! & \! -4 \! & \! -4 \\
 -6 \! & \! 1 \! & \! -4 \! & \! -3 \\
\end{bmatrix} \! & \! \begin{bmatrix}
 -3 \! & \! 0 \! & \! -1 \! & \! -2 \\
 0 \! & \! 0 \! & \! 0 \! & \! 0 \\
 -1 \! & \! 0 \! & \! 0 \! & \! -1 \\
 -2 \! & \! 0 \! & \! -1 \! & \! -1 \\
\end{bmatrix} \smallskip \\
( 8 , 0 )
\, : \! & \!
\begin{bmatrix}
 9 \! & \! 0 \! & \! -7 \! & \! -10 \\
 0 \! & \! 5 \! & \! 0 \! & \! 2 \\
 -7 \! & \! 0 \! & \! 15 \! & \! 5 \\
 -10 \! & \! 2 \! & \! 5 \! & \! 13 \\
\end{bmatrix} \! & \! \begin{bmatrix}
 8 \! & \! 6 \! & \! 5 \! & \! 8 \\
 6 \! & \! -8 \! & \! -5 \! & \! -4 \\
 5 \! & \! -5 \! & \! -3 \! & \! -2 \\
 8 \! & \! -4 \! & \! -2 \! & \! 0 \\
\end{bmatrix} \! & \! \begin{bmatrix}
 8 \! & \! 4 \! & \! 11 \! & \! 4 \\
 4 \! & \! 0 \! & \! 10 \! & \! 0 \\
 11 \! & \! 10 \! & \! 5 \! & \! 10 \\
 4 \! & \! 0 \! & \! 10 \! & \! 0 \\
\end{bmatrix} \! & \! \begin{bmatrix}
 -4 \! & \! -4 \! & \! 2 \! & \! 4 \\
 -4 \! & \! -4 \! & \! 2 \! & \! 4 \\
 2 \! & \! 2 \! & \! 0 \! & \! 0 \\
 4 \! & \! 4 \! & \! 0 \! & \! 0 \\
\end{bmatrix} \smallskip \\
( 10 , 2 )
\, : \! & \!
\begin{bmatrix}
 29 \! & \!\! -22 \! & \! 4 \! & \! -4 \\
 -22 \! & \! 26 \! & \! -7 \! & \! 5 \\
 4 \! & \! -7 \! & \! 25 \! & \! -6 \\
 -4 \! & \! 5 \! & \! -6 \! & \! 5 \\
\end{bmatrix} \!\! & \! \! \begin{bmatrix}
 -1 \! & \! -4 \! & \! -1 \! & \! -4 \\
 -4 \! & \!\! -12 \! & \! -4 \! & \!\! -14 \\
 -1 \! & \! -4 \! & \! -1 \! & \! -4 \\
 -4 \! & \!\! -14 \! & \! -4 \! & \!\! -15 \\
\end{bmatrix} \! & \! \begin{bmatrix}
 -5 \! & \! 9 \! & \! 6 \! & \! 7 \\
 9 \! & \! 8 \! & \! -2 \! & \! 5 \\
 6 \! & \! -2 \! & \! -4 \! & \! -2 \\
 7 \! & \! 5 \! & \! -2 \! & \! 3 \\
\end{bmatrix} \!\! & \!\! \begin{bmatrix}
 -5 \! & \! 16 \! & \! -1 \! & \!\!-10 \\
 16 \! & \!\! -12 \! & \! 20 \! & \! 4 \\
 -1 \! & \! 20 \! & \! 7 \! & \!\! -14 \\
 -10 \! & \! 4 \! & \!\! -14 \! & \! 0 \\
\end{bmatrix}\smallskip \\
( 10 , 0 )
\, : \! & \!
\begin{bmatrix}
 51 \! & \! -34 \! & \! 5 \! & \! 60 \\
 -34 \! & \! 147 \! & \! 30 \! & \!\! -37 \\
 5 \! & \! 30 \! & \! 99 \! & \! 40 \\
 60 \! & \!\! -37 \! & \! 40 \! & \! 135 \\
\end{bmatrix} \!\! & \!\! \begin{bmatrix}
 15 \! & \! 97 \! & \! 64 \! & \! 36 \\
 97 \! & \! -13 \! & \! -50 \! & \! 76 \\
 64 \! & \! -50 \! & \! -63 \! & \! 40 \\
 36 \! & \! 76 \! & \! 40 \! & \! 48 \\
\end{bmatrix} \!\! & \!\! \begin{bmatrix}
 -27 \! & \! 45 \! & \!\! -27 \! & \! 51 \\
 45 \! & \! 0 \! & \!\! -30 \! & \! 10 \\
 -27 \! & \!\! -30 \! & \! 48 \! & \!\! -44 \\
 51 \! & \! 10 \! & \!\! -44 \! & \! 24 \\
\end{bmatrix} \!\! & \!\! \begin{bmatrix}
 -60 \! & \! 30 \! & \! 10 \! & \!\! -52 \\
 30 \! & \! 45 \! & \!\! -55 \! & \! -2 \\
 10 \! & \! \! -55 \! & \! 40 \! & \! 32 \\
 -52 \! & \! -2 \! & \! 32 \! & \!\! -32 \\
\end{bmatrix}
\end{matrix}
$$
}

\noindent
This list of $20$ representatives completes the proof of the 
 if-direction in Theorem \ref{thm:DI}.
\end{proof}

\section{Ramification and Cayley's Theorem} \label{sec:nodes}

The classical approach to nodal quartics in $3$-space is via their double cover of the plane when 
projected from a node. Relevant references include Cayley \cite{Ca1},  Coble \cite{Co}, and Jessop \cite{J}.
See also Cossec \cite{Cos} and Dolgachev \cite{Do} for modern proofs of some of these results.
We here develop this approach over $\R$, for quartic spectrahedra.
The goal is to prove Theorem~\ref{thm:cayley}.

We note that  quartic symmetroids  and their ramification
over two cubics play a crucial role in
 Huh's recent counterexample
to the geometric Chevalley-Warning conjecture~\cite{Huh}.

Let $X=V(f)\subset \C\PP^3$ be a quartic surface and $p$ a node on $X$. 
 The projection from $p$ defines a rational 2:1 map to the plane $\C\PP^2$.
  It extends  to a morphism on the  blow-up
$\tilde X\to X$ with exceptional curve $E_p$ lying over $p$.   We denote 
this extended double cover by
\[\pi_p= \pi_p(\C):\tilde X\to \C\PP^2. \] 
 If $p$ is a real point, then $\pi_p$ maps the real part of $X$ to the real projective plane:
\[
\pi_p(\R):\tilde X_{\R}\to \R\PP^2.
\]
Assume $p=(1:0:0:0)$. Then
$f\,=\,ax_0^2+bx_0+c$ with $ a,b,c\in \C[x_1,x_2,x_3]$.
The fiber of $\pi_p$ is given by the roots of $f$,
   and the {\em ramification locus}, where $f$ has a double root, is the sextic curve  $R_p = V(b^2-4ac)$.
The lines in $X$ through $p$ and the singular points 
of $ X \backslash p$  map to the singular points of $R_p$.
Each node of $R_p$ is the image of
 a node on $X \backslash p$
or the image of a  line in $X$ through $p$.

  The real projection $\pi_p(\R)$ is ramified in the real points of the 
  sextic curve $R_p$.
  The quadratic equation $a=0$ defines in $\C\PP^3$ the 
 {\em tangent cone} of $X$ at $p$. In $\C\PP^2$ it defines a
 conic section  $C_p$.  Since the
  equation $b^2-4ac$ of $R_p$  is a square modulo $a$, the
  sextic $R_p$ is {\em totally tangent} to the conic $C_p$, i.e. tangent at every point of intersection. 

Suppose now that $f(x) = {\rm det}(A(x))$, with $A(x)$  as in (\ref{eq:AxAx}).
  A node $p$ on the symmetroid $X$ has rank $2$ or $3$.  
  If $X$ is transversal then  every node has rank $2$.
  A real node $p$ on a very real symmetroid $X$ is an
  {\em $(m,n)$-node} if the matrix $A(p)$ has $m$ positive and $n$ negative eigenvalues.
  Because we are working projectively, we can always take $m\geq n$. 
  Thus there are two types of rank-2 nodes: $(2,0)$-nodes that lie on the spectrahedron $S(f)$
  and $(1,1)$-nodes that do not. 

Cayley discovered that 
 if the node $p$ has rank $2$, then  the ramification locus is the union of two cubic curves
 and vice versa (cf.~\cite{Ca1, Cos, J}). 
    We now prove our extension of his result.

 \begin{proof}[Proof of Theorem~\ref{thm:cayley}]
 ($\Rightarrow$) Suppose that $p$ is a rank-$2$ node on the symmetroid  $X=V(f)$.
After a change of coordinates, we can take $p=(1:0:0:0)$. 
By conjugating with an appropriate matrix (over $\C$), we can write 
 $f$ as the determinant of the symmetric matrix 
\begin{equation}
\label{eq:symmatrix1}
\begin{bmatrix}
m_{11}&x_0+m_{12}&m_{13}&m_{14}\\
x_0+m_{12}& m_{22}&m_{23}&m_{24}\\
m_{13}&m_{23}&m_{33}&m_{34}\\
m_{14}&m_{24}&m_{34}&m_{44}\\
\end{bmatrix}
\end{equation}
where each $m_{jk}$ is a linear form in $ x_1,x_2,x_3$.
This determinant equals
\begin{equation}\label{eq:RamDet}
f \quad = \quad
- q \cdot x_0^2
\,+\, 2  F_{12} \cdot x_0 \,+ \, \Delta,
\;\;\;\text{ where }\;\;\;
q = \left |\begin{matrix}
m_{33}&m_{34}\\
m_{34}&m_{44}\\
\end{matrix} \right |.
\end{equation}
Here $\Delta$ is the determinant of the matrix $(m_{jk})$, and 
the cubic $F_{jk}$ denotes the cofactor of $m_{jk}$ in this matrix.
The ramification locus $R_p$ is the curve in $\C\PP^2$  defined by the sextic equation
\[
0 \;\;=\;\; (2F_{12})^2 -4(- q) \Delta \;\; = \;\; 4 (F_{12}^2+q \Delta) .
\]
Using the determinantal formula $\,q \Delta  \, = \, F_{11}F_{22}-F_{12}^2  $,
this sextic factors into two cubics:
\[
F_{12}^2+q \Delta\;\; = \;\; F_{11} \cdot F_{22}.
\]
Any singular point of $R_p$ gives a singular point of $X$, so if $X$ is transversal, $F_{11}$ and $F_{22}$ are both smooth.

(\textit{1.2.3}$\; \Rightarrow$) 
Suppose that the symmetroid $f$ is very real and $p$ is a $(1,1)$-node.
 There exists a real change of coordinates taking $p$ to $(1:0:0:0)$.
 The quartic $f$ can be written as the determinant \eqref{eq:symmatrix1} where the
$m_{jk}$ are linear forms in $\R[x_1, x_2, x_3]$.  The above argument shows that
the two ramification cubics $F_{11}$ and $F_{22}$ are real. \smallskip

(\textit{1.2.2}$\; \Rightarrow$)
Suppose instead that $f$ is very real and $p$ is a $(2,0)$-node.
We again make a real change of coordinates so that $p=(1:0:0:0)$.
Then $f$ has a determinantal representation 
$$
{\small
\left|\begin{matrix}
\frac{1}{2}x_0+\ell_{11} \!&\!\ell_{12} \!&\!\ell_{13} \!&\!\ell_{14}\\
\ell_{12} \!&\!  \frac{1}{2}x_0+\ell_{22} \!&\!\ell_{23} \!&\!\ell_{24}\\
\ell_{13} \!&\!\ell_{23} \!&\!\ell_{33} \!&\!\ell_{34}\\
\ell_{14} \!&\!\ell_{24} \!&\!\ell_{34} \!&\!\ell_{44}\\
\end{matrix}\right|
\;=\; -\frac{1}{4}
\left|\begin{matrix}
\ell_{11}-\ell_{22}+ 2i \ell_{12} \!&\! \ell_{11}+\ell_{22}+ x_0  \!&\! \ell_{13}+i \ell_{23}   \!&\! \ell_{14}+i \ell_{24}  \\
 \ell_{11}+\ell_{22}+ x_0  \!&\! \ell_{11}-\ell_{22}- 2i \ell_{12}  \!&\! \ell_{13}-i \ell_{23}  \!&\! \ell_{14}-i \ell_{24} \\
 \ell_{13}+i \ell_{23}  \!&\! \ell_{13}-i \ell_{23}   \!&\! \ell_{33}  \!&\! \ell_{34} \\
 \ell_{14}+i \ell_{24} \!&\! \ell_{14}-i \ell_{24}  \!&\! \ell_{34}  \!&\! \ell_{44} \\
\end{matrix}\right|
}
$$
with real linear forms $\ell_{jk} \in \R[x_1,x_2,x_3]$.
The right matrix  is obtained by conjugating the left with an 
appropriate complex matrix to return it to the form \eqref{eq:symmatrix1}.  
The ramification cubics are still given by the cofactors  $F_{11}$, $F_{22}$
of the right-hand matrix, which are complex~conjugates. \vskip 0.1cm

(\textit{1.2.1}$\; \Rightarrow$) 
This is immediate from the statements
(\textit{1.2.2}$\; \Rightarrow$) and
(\textit{1.2.3}$\; \Rightarrow$).
\smallskip

To finish the ``forward'' directions of Theorem~\ref{thm:cayley}, note that 
in both of these real cases, the quadratic $q$ defining $C_p$ has a real symmetric determinantal 
representation \eqref{eq:RamDet}.  This implies that $C_p$ must have a real point,  since
a positive quadratic form is not very real.  

\smallskip

($\Leftarrow$) Suppose first that $X=V(f)$ is a quartic surface with a node
at $p=(1:0:0:0)$ and that the ramification 
locus factors into two smooth cubic curves $R_1=V(F_{11})$ and $R_2=V(F_{22})$. We suppose furthermore that $R_1$ and $R_2$ intersect transversally in $9$ points and that the tangent conic $C_p=V(q)$ is smooth. Then $X$ has precisely $10$ nodes,
  and the quartic has the form $f= -q x_0^2 + 2 g x_0 + \Delta$. The assumption that $X$ has no line through $p$ gives that $V(q,g,\Delta)=\emptyset$.
We shall verify that $X$ is a symmetroid by
constructing a symmetric determinantal representation, up to sign, of $f$. By a continuity argument, 
this in fact shows that any nodal quartic with ramification locus consisting of two cubics is a symmetroid.

We first 
construct a symmetric determinantal representation for the ternary quartic $\Delta$.
This will be done using Dixon's method \cite{Dixon}.
Up to rescaling of $F_{11} $, we have
\[ F_{11} F_{22}\;=\;  ( (2g)^2 - (-q)\Delta  )/4  \;=\; g^2+q\Delta
\qquad \quad \hbox{in} \,\, \C[x_1,x_2,x_3].
\]
Since $q\Delta$ is a square modulo $F_{11}$, there are subschemes $Z_q$ and $Z_\Delta$  in $V(F_{11})$, satisfying
\[ 2Z_q\;=\;V(q,F_{11}) \;\;\;\;\text{ and }\;\;\;\;2Z_\Delta\;=\;V(\Delta,F_{11}),\]
 of length $3$ and $6$ (resp.).
Since $V(q,g,\Delta)=\emptyset$, these are disjoint and $Z_q \cup Z_\Delta =   V(g,F_{11})$.
  
 We claim that $Z_\Delta$ is not contained in a conic.  If it were, this conic together with a line $L$ through any length 
 $2$ subscheme of $Z_q$ is a cubic curve through a subscheme of length $8$ in $V(g,F_{11})$.  By the 
 Cayley-Bacharach Theorem, this cubic contains all of $V(g,F_{11})$.
   Hence   $L$ contains $Z_q$.  But  this is impossible since
   $Z_q$ is a length $3$ subscheme of the smooth conic~$V(q)$.

Consider now the space of cubics in $\C[x_1, x_2, x_3]_3$ vanishing on   $Z_\Delta$.  This space is $4$-dimensional by the previous paragraph.
We extend $F_{11}$ and $F_{12} := g$ to a basis $\{F_{11}, F_{12}, F_{13}, F_{14}\}$ of 
that linear space. 
For each $2\leq j\leq k \leq 4$, the polynomial $F_{1j}F_{1k}$ vanishes to order two on $Z_\Delta$
and hence lies in $\langle F_{11}, \Delta \rangle$.
We can find a cubic $F_{jk} \in \C[x_1, x_2, x_3]_3$ 
and a conic $q_{jk} \in \C[x_1, x_2, x_3]_2$ satisfying
\[ F_{1j}F_{1k}\, =\, F_{11}F_{jk} + q_{jk}\Delta.  \]
Let $F$ be the symmetric matrix of cubics $(F_{jk})$.    By construction, the $2\times 2$ minors of $F$ lie
in $\langle \Delta \rangle$. 
It follows that $\Delta^2$ divides the $3\times 3$ minors of $F$, and  $\Delta^3$ divides 
the determinant of $F$.  Since both $\Delta^3$ and $\det(F)$ have degree $12$, 
we have $\det(F) = c\cdot \Delta^3$ for some constant
$c\in \C$. 
The fact that $Z_\Delta$ is not contained in a conic
ensures that $c\not = 0$, so we get a determinantal 
representation for $X$. This was proved by
Dixon \cite{Dixon}. It is also featured in \cite[Theorem 2.3]{PSV}.

Now consider the adjoint matrix $F^{\rm adj}$  whose entries are the cofactors of $F$. 
These entries have degree nine and are divisible by $\Delta^2$. 
Dividing by $\Delta^2$, we obtain the $4\times 4$ matrix 
\[M=  (1/\Delta^2) \cdot F^{\rm adj}\] 
with linear entries $m_{jk}$. 
A calculation (or the proof of \cite[Theorem~4.6]{PV}) shows that 
\[\det(M) = c^3 \cdot  \Delta,  \;\;\;\;\;\;\;\; M^{\rm adj} = c^2 \cdot F, \;\;\;\;\;\;\;\;  \text{and}
\;\;\;\;\;\;\;\;  \det(M_{34}) = c\cdot q \]
where $M_{34}$ is the $2\times 2$ submatrix of $M$
with rows and columns $\{3,4\}$.
 In particular, $(M^{\rm adj})_{11} = c^2 \cdot F_{11}$,  
 $(M^{\rm adj})_{22} = c^2 \cdot  F_{22}$, and 
 $(M^{\rm adj})_{12} = c^2 \cdot g$.  Now consider the matrix
\begin{equation} \label{eq:DetRep}
A(x) \;\;\;=\;\;\; 
\begin{bmatrix}
m_{11}&c x_0+m_{12}&m_{13}& m_{14}\\
c x_0 +m_{12}&m_{22}&m_{22}&m_{24}\\
m_{13}&m_{23}&m_{33}&m_{34}\\
m_{14}&m_{24}&m_{34}&m_{44}\\
\end{bmatrix} .
\end{equation}
The determinant of $A(x)$ equals
\[-\det(M_{34})\cdot (cx_0)^2 + 2(M^{\rm adj})_{12}\cdot (cx_0) + \det(M)
\;\;= \;\; c^3 \cdot ( -qx_0^2 + 2 g x_0 + \Delta)
\;\;= \;\; c^3 \cdot f,
\]
Since $c \not= 0$, this determinantal representation of $c^3f$ shows
that $V(f)$ is a symmetroid. This shows the first part of Theorem~\ref{thm:cayley}.

(\textit{1.3.3}$\; \Leftarrow$)
Suppose that $f$ is real. Then  $F_{12}$ and $\Delta$ are real. If, in addition,
the ramification cubic $F_{11}$ is real, then the ideal $\langle F_{11}, \Delta \rangle$ 
is defined over $\R$.  Thus all cubics $F_{jk}$ can be taken in $\R[x_1, x_2, x_3]$.
It follows that $c\in \R$, and \eqref{eq:DetRep} gives a real determinantal representation, up to sign, of $f$.
Furthermore, we see that  $A(p) = A((1{:}0{:}0{:}0))$ is not  semidefinite.

(\textit{1.3.2}$\; \Leftarrow$)
Suppose that the ramification cubics $F_{11} , F_{22}$ are complex conjugates. 
This case is more delicate. Up to \emph{real} rescaling of $ F_{11}$ and $F_{22}$, we only know that
$\pm F_{11} F_{22}$ equals the discriminant $F_{12}^2+q\Delta$. 
Suppose $- F_{11} F_{22} = F_{12}^2+q\Delta$.
Then $- q \Delta$ is a sum of squares:
\[ -q  \Delta \;\; = \;\; F_{12}^2 + F_{11} F_{22}   \;\;=\;\; F_{12}^2 + 
({\rm Re}(F_{11}))^2 +  ({\rm Im}(F_{11}))^2.    \]
This implies that $-q$ and $\Delta$ are 
both nonpositive or both nonnegative. 
But this is not possible since the conic $C_p = V(q)$ has real points.
Therefore we can assume  $F_{11} F_{22}=F_{12}^2+q\Delta$.

In the algorithm above,
we can then chose $F_{1j}$ and $F_{2j}$ 
to be complex conjugates,
and we can choose $F_{33},F_{34},F_{44}$ to be real.
  Taking the adjoint of $F$ and dividing by $\Delta^2$ results 
in a matrix $M = (m_{jk})$ with the same real structure, namely that conjugation induces the 
involution $(1 \leftrightarrow 2)$ in the indices of the entries $m_{jk}$. 
Conjugating \eqref{eq:DetRep} by an appropriate complex matrix 
produces the following real determinantal representation of $f$:
\begin{equation}\label{eq:DetRep2}
{\small
 \begin{bmatrix}
2 c x_0+ m_{11}+m_{22}+2 m_{12} \!&\! i (m_{11}- m_{22})  \!&\!  m_{13}+m_{23}  \!&\!  m_{14}+m_{24} \\
 i (m_{11}- m_{22})  \!&\! 2 c x_0  -m_{11}-m_{22}+2 m_{12} \!&\!  i( m_{13}- m_{23})  \!&\!  i (m_{14}- m_{24}) \\
 m_{13}+m_{23}  \!&\!  i (m_{13}-m_{23} ) \!&\!  m_{33}  \!&\!  m_{34} \\
 m_{14}+m_{24}  \!&\!  i (m_{14}- m_{24} ) \!&\!  m_{34}  \!&\!  m_{44} \\
 \end{bmatrix} .
 }
\end{equation}
 This symmetric matrix is real, and it is semidefinite at $p=(1:0:0:0)$. \vskip 0.1cm

(\textit{1.3.1}$\; \Leftarrow$) 
This is immediate from the statements
(\textit{1.3.2}$\; \Leftarrow$) and
(\textit{1.3.3}$\; \Leftarrow$). \vskip 0.1cm

\noindent This concludes our proof of 
Theorem \ref{thm:cayley}.
We shall see a refinement in Proposition \ref{interlacing}.
\end{proof}

\begin{corollary} \label{cor:ninepoints}
In the situation of Theorem \ref{thm:cayley},
the nine intersection points of
the cubics $R_1$ and $R_2$
are precisely the images of the rank-$2$ nodes
on the symmetroid other than~$p$.
\end{corollary}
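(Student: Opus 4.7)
The plan is a counting argument that identifies two sets of nine points: the singular locus of the branch sextic $R_p$, and the images under $\pi_p$ of the rank-$2$ nodes of $X$ other than $p$. First I would recall the general fact stated in Section~\ref{sec:nodes} just before Theorem~\ref{thm:cayley}: every singular point of $R_p$ is either the image of a line through $p$ contained in $X$, or the image of a singular point of $X\setminus\{p\}$. Since the hypothesis of Theorem~\ref{thm:cayley} rules out lines through $p$ on $X$, the singular locus of $R_p$ coincides with $\pi_p(\mathrm{Sing}(X)\setminus\{p\})$. Transversality of $X$ then tells us that $X$ has exactly ten rank-$2$ nodes, so precisely nine of them lie outside $p$.

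On the target side, Theorem~\ref{thm:cayley} asserts that $R_1$ and $R_2$ are smooth and meet transversally in exactly nine points. Hence the singular locus of $R_p = R_1\cup R_2$ is exactly those nine transverse intersection points. At this stage both sides of the double cover have produced a nine-point set, and the two are related by $\pi_p$.

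All that remains is to verify that the induced map from the nine rank-$2$ nodes of $X\setminus\{p\}$ to the nine points of $R_1\cap R_2$ is a bijection. Since both sets are finite of the same cardinality, it suffices to check injectivity. If two distinct nodes $q_1,q_2\in X\setminus\{p\}$ had $\pi_p(q_1)=\pi_p(q_2)$, then $p,q_1,q_2$ would be collinear, and the line through them would meet $X$ with multiplicity at least $2+2+2=6$ at these three nodes, exceeding $\deg X = 4$. By B\'ezout, that line would lie in $X$, contradicting the assumption that $X$ contains no line through $p$. This forces the map to be a bijection and proves the corollary.

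The main potential obstacle is the very first step, namely that every node of $X\setminus\{p\}$ really does map to a \emph{singular} point of $R_p$, which rests on the local analytic picture of a node under projection (the two local branches of $X$ at $q$ produce two local branches of the ramification curve at $\pi_p(q)$). This is precisely the content of the remarks preceding Theorem~\ref{thm:cayley}, so once that is invoked, the remainder of the argument is a clean cardinality-and-B\'ezout bookkeeping.
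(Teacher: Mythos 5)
Your proof is correct and follows essentially the same route as the paper: nodes of $X$ other than $p$ map to singular points of $R_p$, and since $R_1$ and $R_2$ are smooth, those singular points are exactly $R_1 \cap R_2$. The one genuine addition in your argument is the explicit injectivity step via B\'ezout (two distinct nodes mapping to the same plane point would put three double points of $X$ on a line, forcing that line into $X$, contrary to hypothesis); the paper leaves this implicit and simply records the counting. Your version is the more complete one on this point, while the paper's proof additionally appends a limiting remark to extend the conclusion to non-transversal symmetroids, which you do not address but which is outside the transversal situation fixed in the corollary's hypotheses.
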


\begin{proof}
A transversal symmetroid has exactly $10$ nodes, all of rank $2$.
The nodes distinct from $p$ are all mapped to singular points of the sextic $R_p$. These are
the intersection points of the two components $R_1$ and $R_2$.
 Since any symmetroid is a limit of transversal symmetroids, the rank-$2$ nodes  are always mapped by $\pi_p(\C)$ to $R_1\cap R_2$, provided $p$ itself is a rank-$2$ node.
\end{proof}

 There exist nodal quartic symmetroids  with more than ten nodes. 
 For instance, the Kummer symmetroids  in Section \ref{sec:treats} have $16$ nodes, six of which have rank $3$.
 Here each of the cubics  $R_1$ and $R_2$
 factors into three lines, so $R_1 \cup R_2$ has
 $15$ singular points, but $|R_1 \cap R_2| = 9$.

For a transversal quartic symmetroid, the
representation (\ref{eq:AxAx}) is unique
up to conjugation; for a proof see e.g.~\cite[Proposition 11]{BHORS}.
However, if the symmetroid is nodal, but not transversal,
then several  inequivalent determinantal representations can exist.
Each has ten rank-$2$ nodes, and other nodes of rank $3$.
For the Kummer symmetroid, the six rank-$3$ nodes lie
on a plane in $\C \PP^3$, and there are $16$ such planes.
This gives $16$ inequivalent
determinantal representations. Furthermore,
every node has rank $2$ in
{\em some} determinantal representation.
There exist other symmetroids, with exactly $11$ nodes,
where one node $p$ has
  rank $3$ in {\em every} symmetric determinantal representation.
If we project from such a node $p$,
then the ramification curve $R_p$ is a sextic that 
 cannot be decomposed into two cubic curves \cite[I.9]{J}.

The ($\Leftarrow$) direction in the proof above constitutes an algorithm
for computing the unique determinantal representation of a 
transversal symmetroid from its quartic
 $f$. We illustrate this algorithm
with an example. 
An alternative algorithm based on syzygies
is given in
\cite[\S 5]{BHORS}.\\

\begin{figure}[h]
\begin{center}
\vskip -0.4cm
\includegraphics[height = 2.5in]{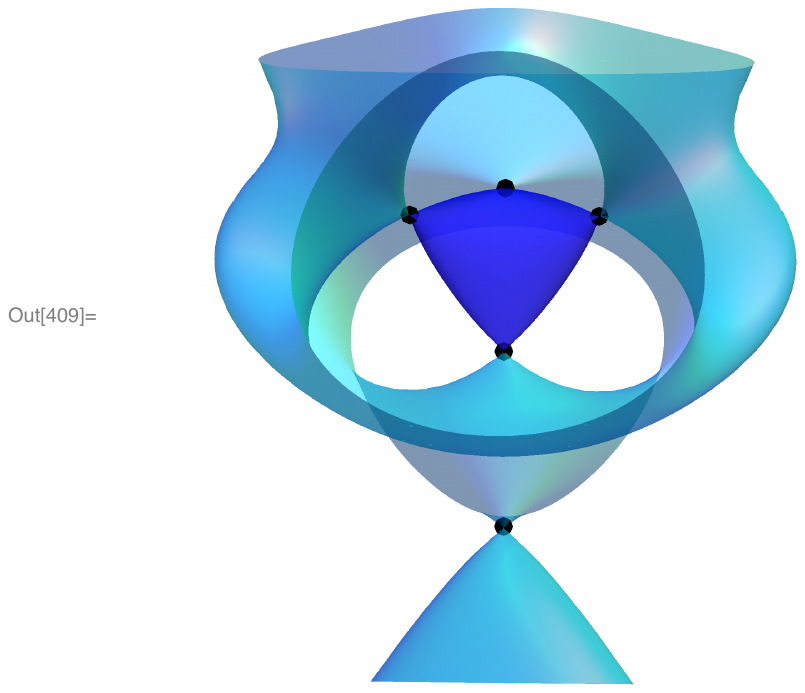}  \quad
\includegraphics[height = 2.45in]{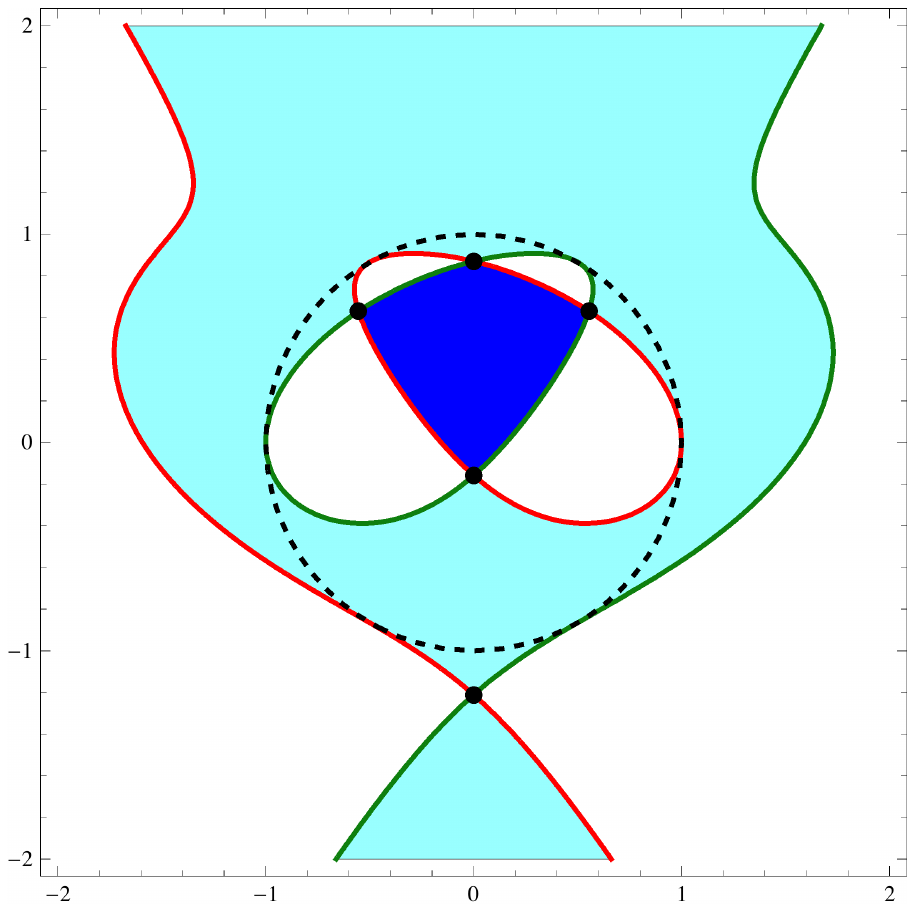}
\end{center}
\vskip -0.5cm
\caption{ \label{fig:construction}
The quartic spectrahedron in Example~\ref{ex:construction} and its projection from a node.}
\end{figure}
\vskip -0.5cm

\begin{example}\label{ex:construction}
The  surface $X = V(f)$ shown in Figure~\ref{fig:construction} is given by the quartic polynomial
\[f = -q x_0^2+2 g x_0 + \Delta, \;\;\;\;\; \text{where} \;\;\;\;\; q = x_1^2 - x_2^2 - x_3^2,  \;\;\; g =  8x_3^3-6x_1^2x_3,  \;\;\; \text{and}\]
 \[\Delta =  x_1^4 - 41x_1^2x_2^2 + 16x_2^4 + 12x_1^3x_3 -  36x_1x_2^2x_3 - 5x_1^2x_3^2 + 44x_2^2x_3^2 -  36x_1x_3^3 + 28x_3^4.\] 
The ramification locus from the node $p = (1:0:0:0)$ is the product $F_{11}F_{22}$ of the cubics
\begin{align*} 
F_{11} \;\;&=\;\; x_1^3+6x_1^2x_2-3x_1x_2^2-4x_2^3+6x_1^2x_3-6x_2^2x_3-3x_1x_3^2-12x_2x_3^2-6x_3^3, \text{ and}   \\
F_{22} \;\;&=\;\; x_1^3-6x_1^2x_2-3x_1x_2^2+4x_2^3+6x_1^2x_3-6x_2^2x_3-3x_1x_3^2+12x_2x_3^2-6x_3^3.
\end{align*}
We check that $F_{11}F_{22}- g^2 = q\Delta$.  The ideal $\sqrt{\langle F_{11},\Delta \rangle }$ contains a 4-dimensional space of
cubics. We extend $F_{11}$ and $F_{12} =g $ to a basis $\{ F_{11}, F_{12},F_{13},F_{14} \}$ of this linear space  
where 
\begin{align*} 
F_{13} \;\;&=\;\;  3x_1x_2x_3+12x_2^2x_3-9x_1x_3^2+18x_2x_3^2+10x_3^3, \text{ and }\\
F_{14} \;\;&=\;\; x_1^2x_2+7x_1x_2^2+4x_2^3-18x_2^2x_3+23x_1x_3^2-32x_2x_3^2-26x_3^3.
\end{align*}
Then, for example, the product $F_{12}F_{13}$ lies in the ideal $\langle F_{11}, \Delta \rangle$, and we can write it as
 {\footnotesize \[ 
\frac{1}{4}(-x_1^3+45x_1x_2^2-28x_2^3-6x_1^2x_3-36x_1x_2x_3+42x_2^2x_3+33x_1x_3^2-36x_2x_3^2-30x_3^3)F_{11} \;+\; \frac{1}{4}(x_1^2+6x_1x_2-7x_2^2+5x_3^2)\Delta. 
\]}We set $F_{23}$ to be the coefficient of $F_{11}$. Similarly, we find $F_{jk}$ for $2\leq j \leq k \leq 4$. 
The determinant of the matrix of cubics $F = (F_{jk})_{jk}$ is $(9/16)\cdot \Delta^3$. 
Taking the adjoint of $F$  and dividing by $\Delta^2$ gives a matrix whose determinant
is $(9/16)^3\cdot \Delta$.  Adding the appropriate matrix $x_0A_0$, as in \eqref{eq:DetRep},
gives a determinantal representation of $(9/16)^3\cdot f$:
\[ \small{
 \frac{1}{32}\begin{bmatrix}
  18 x_1-108 x_2+108 x_3 & 18 x_0+114 x_3 & 24 x_3-72 x_2 & -36 x_2 \\
 18 x_0+114 x_3 & 19 x_1+121 x_2+108 x_3 & 4 x_1+28 x_2+60 x_3 & 30 x_3 \\
 24 x_3-72 x_2 & 4 x_1+28 x_2+60 x_3 & 160 x_1-128 x_2-96 x_3 & 72 x_1-72 x_2-24 x_3 \\
 -36 x_2 & 30 x_3 & 72 x_1-72 x_2-24 x_3 & 36 x_1-36 x_2 
 \end{bmatrix}.
}\]
\end{example}

\section{The view from a node}
\label{sec:view}

We now embark on proving the "only if" direction of Theorem 1.1. In particular, we will show that the number of real nodes on the spectrahedron is even. For this we need to take a closer look at the
projection of a quartic spectrahedron $S$ from
a real node $p$ on its symmetroid $X$. 

To do this, we rely heavily on the notation and proof of Theorem~\ref{thm:cayley}, in particular
the conic $C_p$ and ramification cubics $R_1$ and $R_2$. In the notation of \eqref{eq:RamDet},
the tangent cone of $X$ at $p$ is the non-empty quadratic cone $V(q)$ in $\R \PP^3$.
  It is defined by the determinant of the symmetric $2\times 2$ matrix $\left |\begin{matrix}
m_{33}&m_{34}\\
m_{34}&m_{44}\\
\end{matrix} \right |$ of real linear forms. 
A point $p^{\prime} \in \R \PP^3$ is {\em inside} that cone if 
this quadratic polynomial is nonnegative at $p'$ and {\em outside} the cone otherwise. 
The image of a node $\pi_p(p^{\prime})$ lies in the convex hull of $C_p$ if and only if 
$p'$ lies in the tangent cone of $p$. 
  
Recall that a smooth real cubic curve in the projective plane has a unique {\em pseudo-line}, 
namely a real connected component that is non-contractible (as a loop in $\mathbb R\PP^2$). 
The other connected component, if non-empty, is the {\em oval} of the cubic. Removing an oval disconnects $\mathbb R\PP^2$, 
whereas removing a pseudo-line does not. 
A spectrahedral cubic plane curve has an oval, which is precisely the boundary of the spectrahedron.

\begin{proposition}\label{prop:inside}
Let $S$ be a transversal quartic spectrahedron, fix
  a rank-$2$ node $p$ on its symmetroid~$X$, and let
  $C_p$ be the image of the tangent cone under  $\pi_p(\R)$.
 Suppose no line through $p$ lies in $X$.
 Then each point in $S$ lies inside the tangent cone of $X$ at $p$.
Furthermore:
\begin{itemize}
\item[(a)]
If  $p \in S$, then the image of $S$
   in $\R \PP^2$ is the convex hull of the conic $C_p$.
    Every other node in $\partial S$ is inside the cone and every node 
    in $X \backslash S$
    is outside the cone.
    \item[(b)] If  $p\in X \backslash S$, then  the
    image of $S$ is  the  intersection of the ovals of the ramification cubics, both contained in the interior of the conic $C_p$.
Every node in $\partial S$ is mapped to the boundary of this convex set. 
Each node in $X \backslash S$ is mapped to a point outside $C_p$.
\end{itemize}
\end{proposition}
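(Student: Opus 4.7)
The plan is to study the spectrahedron $S$ by restricting to real lines $L_v$ through the node $p$. In an affine chart with $p$ at the origin, writing points of $L_v$ as $p+tv$, the quartic $X$ meets $L_v$ at $p$ (with multiplicity $2$) and at the two further parameter values
\[
t_{\pm}(v) \;=\; \frac{-g(v) \pm \sqrt{F_{11}(v)\,F_{22}(v)}}{\Delta(v)}
\]
coming from the quadratic factor of $\det A(p+tv)/t^2$ derived from \eqref{eq:RamDet}. Since the PSD locus of the linear pencil $t\mapsto A(p+tv)$ is a closed convex interval whose endpoints are zeros of the determinant, the segment $L_v\cap S$ is bounded by points of $\{p,\ p+t_-(v)v,\ p+t_+(v)v\}$. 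The proposition then reduces to a sign analysis in each of the two cases.

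For case (a), $A(p)$ is PSD of rank $2$. Diagonalizing $A(p)$ and writing $A$ in the corresponding block form, the tangent cone of $S$ at $p$ becomes the convex cone $\{v : C(v)\succeq 0\}$, where $C(v)$ is the bottom-right $2\times 2$ block whose determinant is proportional to $q(v)$. This cone sits in one of the two components of the complement of $V(q)$ and projects onto the disk bounded by $C_p$. Global convexity of $S$ then promotes local containment in this tangent cone at $p$ to $S\subseteq\{q\ge 0\}$, and the line-by-line analysis shows that every direction $v$ strictly inside the cone extends to a full chord $[p,\,p+t_+(v)v]\subseteq \overline S$, giving $\pi_p(S) = \text{conv}(C_p)$. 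For the node assertions, I will use that any line from $p$ through another rank-$2$ node $p''$ meets $X$ only at $p$ and $p''$ (each with multiplicity $2$): then $L_v\cap\overline S$ is either $\{p\}$ (when $v$ is outside the tangent cone of $S$ at $p$, hence outside $V(q)$) or the chord $[p,p'']$ (when $v$ is inside), giving the equivalence $p''\in\overline S \iff p''$ inside $V(q)$.

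For case (b), $A(p)$ has signature $(1,1)$, so $t=0$ does not lie in the PSD locus on any $L_v$. Any non-empty PSD interval is therefore contained entirely in $\{t>0\}$ or $\{t<0\}$, with both endpoints among $\{t_-(v),t_+(v)\}$. This forces $t_{\pm}(v)$ to be simultaneously real and of equal sign, i.e.\ $F_{11}(v)F_{22}(v)\ge 0$ together with $t_-(v)\,t_+(v) = -q(v)/\Delta(v) > 0$. The latter condition, combined with the fact that $\Delta(v)$ has constant sign on the connected set $\pi_p(S)$, places $[v]$ in the interior of $C_p$ and establishes the first conclusion of the proposition. The former condition restricts $[v]$ to the locus where $F_{11}$ and $F_{22}$ share a sign, and specializing the explicit real matrix from the ``$\Leftarrow$'' half of the proof of Theorem~\ref{thm:cayley} will identify the correct branch as the intersection of the two ovals of $R_1$ and $R_2$. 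The node assertions then follow from Corollary~\ref{cor:ninepoints}: the other rank-$2$ nodes project to $R_1\cap R_2$; by the endpoint analysis those on $\partial S$ map to the boundary of the oval intersection, while those in $X\setminus\overline S$ correspond to directions $v$ outside $V(q)$ and so project outside $C_p$.

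The main obstacle is the sign bookkeeping in case (b): the pair of conditions $F_{11}F_{22}\ge 0$ and $q\Delta<0$ defines several regions of $\R\PP^2$, and the delicate point is to single out the intersection of the two ovals rather than the exterior region where $F_{11}$ and $F_{22}$ share the opposite sign. I expect to resolve this by a careful comparison of the signs of $F_{11}$, $F_{22}$, $q$, and $\Delta$ against the semidefiniteness of $A(p+tv)$ along $L_v$, using the explicit real determinantal representation from the proof of Theorem~\ref{thm:cayley}.
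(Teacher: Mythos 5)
Your proposal takes a genuinely different route from the paper: you restrict $S$ to lines through $p$, analyze the quadratic factor $\det A(p+tv)$ and its roots, and try to deduce the shape of $\pi_p(S)$ from sign conditions on the coefficients $q$, $g$, $\Delta$ and the discriminant $F_{11}F_{22}$. This line-by-line approach does give case (a) (modulo the boundary case of a node landing \emph{on} $C_p$, which needs the hypothesis that no line through $p$ lies in $X$, exactly as the paper invokes it). But the gap you flag yourself in case (b) is real and is precisely the substance of that case. The two conditions you extract---$F_{11}(v)F_{22}(v)\geq 0$ and $q(v)\Delta(v)<0$---cut out several regions of $\R\PP^2$, and nothing in the root analysis alone picks out the intersection of the two cubic ovals (where $F_{11},F_{22}>0$) inside the conic (where $q>0$) over the region where both are negative outside the conic; nor does your observation that ``$\Delta$ has constant sign on the connected set $\pi_p(S)$'' determine which sign.

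The paper closes this gap by an observation your plan would have to rediscover: in the real determinantal representation \eqref{eq:DetRep}, the conic $q$ and the two ramification cubics $F_{11}$, $F_{22}$ are (up to positive scalars) \emph{principal minors} of $A(x)$, none of which involves $x_0$. At a point $(e_0,e)$ in the interior of $S$ the matrix is positive definite, so $q(e)>0$, $F_{11}(e)>0$, $F_{22}(e)>0$ all hold at once --- giving directly that $\pi_p(S)$ lies inside $C_p$ and inside both ovals, without any case splitting. Conversely, at a rank-$2$ node $p'$ with $A(p')$ indefinite, every \emph{diagonal} $2\times 2$ minor of $A(p')$ is $\leq 0$, so $q(\pi_p(p'))\leq 0$, and the no-line hypothesis rules out equality. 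Your plan ``to resolve by careful comparison of the signs of $F_{11},F_{22},q,\Delta$ against semidefiniteness of $A(p+tv)$ using the explicit real determinantal representation'' would in effect be reconstructing this principal-minor argument, so it is cleaner to state it up front and drop the discriminant-and-root bookkeeping entirely.
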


\begin{proof}[Proof of Proposition \ref{prop:inside}]
Let $p$ be a node of $X$. After a change of coordinates, we can take $p=(1{:}0{:}0{:}0)$, and 
$X$ has a real determinantal representation $A(x)$ given in \eqref{eq:DetRep} or \eqref{eq:DetRep2},
where $m_{jk}$ are linear forms in $x_1, x_2, x_3$.
In either case, the conic $C_p$ is defined by the principal minor $q = m_{33}m_{44}-m_{34}^2$.
For any point $(e_0, e)$ in the spectrahedron $S$, the matrix $A(e_0,e)$ is positive definite 
and thus its principal minor $q$ is positive at $(e_0,e)$.
So, if there exists $e_0 \in \R$ for which $(e_0,e)\in S$, then $q(e)\geq 0$. 
This means that the image of the spectrahedron $S$ is a convex set contained in the convex hull of the conic $C_p$, 
namely $\{e \in \R \PP^2 \;:\; q(e) \geq 0\}$. 

If $p\in S$, then $X$ has a real representation \eqref{eq:DetRep2}. If $q(e) \geq  0$, 
then the $2\times 2$ matrix defining $q$ is semidefinite. Taking $e_0$ sufficiently large makes the 
matrix \eqref{eq:DetRep2}
definite at the point $(e_0,e)$. Thus the projection of $S$ is exactly the convex hull of $C_p$. 
Taking the minimum such $e_0$ shows that the projection of the boundary of the spectrahedron 
is the same set. In particular, the images of all the nodes in $\partial S$ lie inside this convex hull.  
Furthermore, if the image of a node $p^{\prime}$ lies in the convex hull of $C_p$, this implies that 
some point on the line between $p$ and $p^{\prime}$ lies on the boundary $\partial S \backslash \{p\}$. 
However, because these are both nodes, $p$ and $p^{\prime}$ are the only 
intersection points of this line with the surface $X$.  Hence $p^{\prime} \in \partial S$. 

If $p \notin S$, then the surface $X$ has a real determinantal representation $A(x)$ \eqref{eq:DetRep}. 
The ramification cubics are given by diagonal minors of $A(x)$. This shows that these two cubic plane curves 
have ovals bounding planar spectrahedra and that the image $\pi_p(S)$ is contained in the convex hull 
of each oval. Furthermore, both of the $3\times 3$ matrices defining these cubics 
have a diagonal submatrix whose determinant 
is the conic defining $C_p$.  So both of the cubic ovals are contained in the convex hull of the conic $C_p$. 
For any node $p^{\prime}\in S$, the $3\times 3$ minors of $A(p^{\prime})$ vanish, in particular 
the minors defining the ramification cubics. So the projection $\pi_p(p^{\prime})$ lies on both cubic ovals, 
which in turn lie in the convex hull of $C_p$. 
Finally, if $p^{\prime}$ is a node in $X\backslash S$, then the matrix $A(p^{\prime})$ 
is not semidefinite.  It follows that every diagonal $2\times 2$ minor of the rank-two matrix $A(p')$ is non-positive. 
In particular, the $2\times 2$ minor defining the tangent cone is non-positive, 
meaning that the image $\pi_p(p^{\prime})$ does not lie in the interior of the convex hull of $C_p$. 
Because $X$ does not contain the line joining $p$ and $p'$, 
the image of $\pi_p(p^{\prime})$ cannot lie on the conic $C_p$, so $\pi_p(p^{\prime})$ must lie strictly outside of its convex hull. 
\end{proof} 

We expand on the relationship between tangent cones and ramification cubics in 
Proposition~\ref{interlacing}. 
Now we are prepared to complete our new proof of
the Degtyarev-Itenberg Theorem.

\begin{proof}[Proof of Theorem \ref{thm:DI}]
Let $X = V(f)$ be a very real symmetroid whose spectrahedron $S$ is non-empty.
Write $\mathcal N$ for the set of real nodes in $S$.
We claim that $|\mathcal{N}|$ is even. If all  real nodes of $X$ are in $S$,
we are done, since non-real nodes come in conjugate pairs. We may thus assume that 
$X $ has at least one real $(1,1)$-node $p$.
 It suffices to show that $|\pi_p(\mathcal N)|$ is~even.

By Theorem~\ref{thm:cayley}, the ramification locus of $\pi$ is the union 
$R_1 \cup R_2$ of two real cubics.
 By perturbing the symmetric matrices $A_i$ in  $f(x) = {\rm det}(A(x))$,
we may assume that the cubics are smooth and intersect transversely.  In particular, we may assume that $X$ belongs to the open set of  transversal symmetroids that contains no lines.
By Proposition \ref{prop:inside}, the
image $\pi(S)$ of the spectrahedron is bounded by the ovals of the cubics $R_1$ and $R_2$. 

Furthermore, $\pi(\mathcal{N})$ is the set of intersection points of the ovals of $R_1$ and $R_2$.
 But these intersect an even number of times, by the Jordan curve theorem.
Thus  $|\mathcal{N}|$ is even.

The following lemma now implies
Theorem~\ref{thm:DI}. 
\end{proof}

\def\RR{\mathbb R}

   \def\HH{\mathbb H}
\renewcommand\i{\mathbf i}
\renewcommand\j{\mathbf j}
\renewcommand\k{\mathbf k}

\begin{lemma} \label{lem:atleastone}
Every quartic symmetroid with non-empty spectrahedral region has a  real node.
\end{lemma}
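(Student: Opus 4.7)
The plan is to combine a Krein--Milman argument on the convex body $S$ with the hyperbolicity of $f=\det A(x)$ at its interior points, followed by a quaternionic reduction for the hardest case.

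First, I choose an affine chart of $\R\PP^3$ whose hyperplane at infinity is disjoint from $S$ (always possible, since $S \subsetneq \R\PP^3$), realizing $S$ as a compact convex body in $\R^3$. By the Krein--Milman theorem, $S$ has at least one extreme point. The face structure of the PSD cone shows that an extreme point $p$ of $S$ is either (i) a point with $\mathrm{rank}(A(p)) \leq 2$, or (ii) a rank-$3$ point at which $\partial S$ is locally smooth and strictly convex. In case (i), $p$ is a real point of rank $\leq 2$ of the pencil, hence by Cayley's theorem (Theorem~\ref{thm:cayley}) a real $(2,0)$-node of $X$, and the lemma follows. Equivalently, if $\partial S$ contains any singular point of $X$, we are done.

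The essential case is therefore when $\partial S$ is smooth and strictly convex everywhere -- so $X$ has no nodes on $\partial S$ -- and we must locate a real node of $X$ somewhere in $X(\R) \setminus \partial S$. For this I would exploit the hyperbolicity of $f$ at any interior point $p_0 \in S^\circ$: every real line through $p_0$ meets $X$ in four real points (counted with multiplicity), exactly two of which lie on $\partial S$; the other two lie in $X(\R)\setminus \partial S$. Varying the line over $\R\PP^2$ forces this complementary real locus to be non-empty, and the projection from $p_0$ restricts to a degree-$2$ map $\pi_{p_0} \colon X(\R) \setminus \partial S \to \R\PP^2$. The topology of this cover is severely constrained by $H^1(\R\PP^2;\Z/2)=\Z/2$ together with the irreducibility of $f$, which rules out a trivial two-sheeted cover, and by Proposition~\ref{prop:inside} (b) applied to any candidate real $(1,1)$-node.

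The main obstacle is the final step: extracting an actual real rank-$2$ point of $A(x)$ from the resulting topological picture. I expect this to go through the quaternionic identification $\R^4 \cong \HH$ suggested by the macros $\HH,\mathbf i,\mathbf j,\mathbf k$ introduced immediately before the lemma. Under the induced $\mathrm{Sp}(1)_L \times \mathrm{Sp}(1)_R$ action, the traceless part of $\mathrm{Sym}^2\R^4$ decomposes equivariantly as $\wedge^+ \otimes \wedge^-$, giving a correspondence between traceless real symmetric $4\times 4$ matrices and real $3 \times 3$ matrices. Under this correspondence, the condition of having a real rank-$2$ matrix in the pencil translates into a rank-drop statement for an associated $3\times 3$ matrix pencil in $\R^3$. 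For a $3\times 3$ real symmetric pencil with a definite point, a classical degree/parity argument (the real-valued cubic determinant must vanish somewhere) then produces the required real rank-drop point, which corresponds under the quaternionic decomposition to a real $(1,1)$-node of $X$, completing the proof.
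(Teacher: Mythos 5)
Your proposal correctly senses that the heart of the lemma is a topological parity statement about real symmetric $4\times4$ pencils, and that quaternions are the natural tool; to that extent it matches the paper. But the reduction you propose does not close the argument.

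The Krein--Milman preamble is a detour the paper does not need. After conjugating, the paper simply assumes the identity lies in the spectrahedron, so $A(x)=x_0A_0+x_1A_1+x_2A_2+x_3I$, and then any real combination $c_0A_0+c_1A_1+c_2A_2$ with a repeated real eigenvalue $\lambda$ already yields the real node: $\lambda I-(c_0A_0+c_1A_1+c_2A_2)$ has rank $\le 2$ and lies in the pencil. So the whole lemma reduces to the theorem of Friedland, Robbin and Sylvester \cite{FRS} that every $3$-dimensional real vector space of $4\times4$ real symmetric matrices contains a nonzero matrix with a multiple eigenvalue; the paper then re-proves that elementarily.

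The place where your argument genuinely breaks is the claimed $3\times3$ reduction. Under the equivariant identification $\mathrm{Sym}^2_0\R^4\cong\wedge^+\otimes\wedge^-$, a traceless real symmetric $4\times4$ matrix corresponds to a general (not symmetric) real $3\times3$ matrix; but ``has a repeated eigenvalue'' (equivalently, some real shift has rank $\le2$) is a codimension-$2$ condition on the $4\times4$ side and is not equivalent to the vanishing of the $3\times3$ determinant, which is codimension $1$. So a ``cubic determinant must vanish'' parity argument proves the wrong statement, and it produces neither a repeated eigenvalue nor a rank-$\le 2$ matrix in the original pencil. Likewise, your hyperbolicity/double-cover step does not terminate: $X(\R)\setminus\partial S$ is a surface mapping $2$-to-$1$ onto a region of $\R\PP^2$, and nothing in general topology forces such a cover to degenerate at an isolated point; the ramification away from $\partial S$ would just be a real branch curve, not a node.

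What actually closes the argument in the paper is a concrete homotopy contradiction. Assuming every point of $\mathbb{S}^2\subset\langle A_0,A_1,A_2\rangle$ has four simple eigenvalues, one chooses a continuous eigenframe $V(x)\in SO(4)$ with $V(-x)=J\,V(x)$, where $J$ reverses the rows; the equatorial loop $\gamma(t)=V(\eta(t))$ is contractible since $\eta$ is, yet lifting through the quaternionic double cover $\pi\colon\mathbb{S}^3\times\mathbb{S}^3\to SO(4)$, $\pi(p,q)v=pvq^{-1}$, one finds $J$ corresponds to left multiplication by $(\mathbf{j},\mathbf{i})$, so the lift of $\gamma$ has endpoints differing by $(\mathbf{j},\mathbf{i})^2=(-1,-1)$, which is impossible for a contractible loop. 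Your proposal gestures at this structure but has not supplied (or correctly substituted for) this final contradiction.
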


\begin{proof}
After conjugating by an appropriate matrix, we may assume that the identity matrix is
in the spectrahedron $S$, i.e.,
$f(x)=\det A(x)$ where $A(x)=x_0A_0+x_1A_1+x_2A_2+x_3I$.
By a result of Friedland {\it et al.} \cite[Theorem B]{FRS}, every $3$-dimensional real vector space of $4\times 4$ symmetric matrices contains a non-zero matrix with a multiple eigenvalue. In particular, some 
linear combination $A=c_0A_0+c_1A_1+c_2A_2$ has a double eigenvalue 
$\lambda$. Since $\lambda$ is real, $\lambda I-A$ has rank $2$.
This corresponds to a real node on the symmetroid
$X$. 
\end{proof}

The proof of \cite[Theorem~B]{FRS}  is based on general results by Adams 
concerning vector fields on spheres. 
In order to be self-contained, we include an elementary proof   for 
our special case.

\begin{proof}[Proof of double eigenvalues:]
 Suppose that for each non-zero $x=(x_0,x_1,x_2)$,
 the matrix $x_0A_0+x_1A_1+x_2A_2$ has four distinct real eigenvalues $\lambda_1(x)>\lambda_2(x)> \lambda_3(x)> \lambda_4(x)$.
  Let $V(x) = \bigl(v_1(x),v_2(x),v_3(x),v_4(x)\bigr)$ denote the 
  $4 \times 4$ rotation matrix whose rows are the corresponding
    eigenvectors of unit length. These are defined up to sign.
  Since $\RR^3\backslash \{0\}$ is simply connected, we can assume that the $v_i(x)$ depend continuously on $x$ and that $v_1(-x)=v_4(x)$, $v_2(-x)=v_3(x)$ for all $x\in \mathbb{S}^2$. We will show that this is impossible.
  
Consider the closed loop $\eta(t)=(\cos(2\pi t),\sin(2\pi t),0)$ in $\mathbb{S}^2$, 
 and let $\gamma(t)=V(\eta(t))$  be the corresponding loop of matrices in $SO(4)$.
 Since $\eta$ is contractible, also  $\gamma$ is contractible.
  We have $\,\gamma(t+\tfrac12)=J\cdot \gamma(t)\,$ for all $t\in [0,\tfrac12]$, where $J$ is the  permutation matrix that reverses
   the order of the rows of $\gamma(t)$.   We  derive a contradiction by showing that $\gamma$ is not contractible.

Let $\mathbb{S}^3\subset \HH=\{a+b\i+c\j+d\k\}$ denote the group of unit quaternions. 
The homomorphism $\pi:\mathbb{S}^3\times \mathbb{S}^3\to SO(4)=SO(\HH)$ given by 
$\pi(p,q)=(v\mapsto pvq^{-1})$ is a double covering, and it identifies $SO(4)$ with 
$\mathbb{S}^3\times \mathbb{S}^3/\!\pm(1, 1)$ (see e.g., \cite[p. 294]{Hat}). 
Furthermore, the identity 
$$\j(a+b\i+c\j+d\k)\i^{-1}=d+c\i+b\j+a\k$$
shows that the involution $J$ corresponds to left multiplication by the element $(\j,\i)\in 
\mathbb{S}^3\times \mathbb{S}^3$.
In particular, its square $(\j,\i)^2=(-1,-1)$ is in the kernel of $\pi$.

Since $\mathbb{S}^3\times \mathbb{S}^3$ is simply connected, the loop $\gamma$ lifts to a path $\tilde \gamma:[0,1]\to 
\mathbb{S}^3\times \mathbb{S}^3$. The endpoint $\tilde \gamma(1)$ must map to $\gamma(0)=\gamma(1)$, 
so $\tilde \gamma(1)=(\j,\i)^k \cdot \tilde \gamma(0)$ for some $k \in \{0,2\}$.
In fact, this $k$ is a homotopy invariant of $\gamma$ (by \cite[Prop. 1.30]{Hat}). Since $\gamma(t+\tfrac12)=J \cdot \gamma(t)$ for $t\in [0,\tfrac12]$ and $J\cdot \gamma(0)\neq \gamma(0)$, the lift of $\gamma$ satisfies 
$\tilde\gamma(\tfrac12+t)=(\j,\i)^m \cdot\tilde\gamma(t)$ for some $m\in \{1,3\}$.  Hence
 $$ \tilde\gamma(1)\,\,=\,\,(\j,\i)^m \cdot \tilde\gamma(\tfrac12)\,\,=\,\,(\j,\i)^{2m} \cdot \tilde\gamma(0)\,\,=\,\,(\j,\i)^{2}
 \cdot \tilde\gamma(0)\,\,=\,\,-\tilde \gamma(0).$$ 
 But, $\gamma$ being contractible implies
$\tilde \gamma(0)=\tilde \gamma(1)$. This contradiction completes the proof. 
\end{proof}

\begin{remark}\label{cubicspec} 
For \emph{cubic} symmetroids, we can argue in a similar way: If $x_0A_0+x_1A_1+x_2A_2$ has distinct real eigenvalues for every $x=(x_0,x_1,x_2)$, we may choose linearly independent orthonormal eigenvectors $v_1(x),v_2(x),v_3(x)$, so that $v_1(-x)=v_3(x)$ and $v_2(-x)=-v_2(x)$. Let $J\in SO(3)$ be the corresponding involution on the $v_i$. Fix the group $\mathbb{S}^3$ of unit quaternions.
There is a double covering $\pi:\mathbb S^3\to SO(3)$ which identifies $SO(3)$ with $\mathbb S^3/\pm1$. Explicitly, $\pi(q)=(v\mapsto qv\bar {q})$, where $v=a\i+b\j+c\k$. Under this identification,
 $J$ corresponds to left multiplication by $\frac1{\sqrt{2}}(\i+\k)\in \mathbb S^3$. If we define the loop $\gamma:[0,1]\to SO(3)$ as above, it lifts to a path $\tilde \gamma$ in $\mathbb S^3$ which satisfies  $\tilde \gamma(1)=(\frac1{\sqrt{2}}(\i+\k))^2 \tilde \gamma(0)=-\tilde \gamma(0)$. Again, this implies that $\gamma$ is not contractible and so the symmetroid has a real node.
\end{remark}

\begin{example}\label{nonspec} The hypothesis that the spectrahedron is non-empty is essential in
Lemma~\ref{lem:atleastone}. Consider the symmetroids, of degree $3$ and $4$
respectively, given by the symmetric matrices
\begin{small} $$ 
 \begin{bmatrix}x&
       y&
       z\\
       y&
       z&
       w\\
       z&
       w&
       {-x}\\
       \end{bmatrix}
\qquad        \hbox{and} \qquad
\begin{bmatrix}x&
       y&
       z&
       w\\
       y&
       z&
       {-x}&
       w\\
       z&
       {-x}&
       {-w}&
       z\\
       w&
       w&
       z&
       {-y}\\
       \end{bmatrix}.
$$
\end{small}
These two symmetroids have no real nodes. They lie in
 $\mathcal{S}_{\rm veryreal} \backslash \mathcal{S}_{\rm spec}$. $\hfill \diamondsuit$
\end{example}

Theorem \ref{thm:DI}
concerns transversal spectrahedral symmetroids.
These have ten rank-$2$ nodes as singularities.  Non-transversal spectrahedral symmetroids
have similar constraints.

\begin{proposition}\label{empty} A real singular point on a spectrahedral quartic symmetroid
 has rank $\leq 2$.
\end{proposition}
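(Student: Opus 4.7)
The plan is to rule out $\mathrm{rank}\, A(p) \in \{3,4\}$ at the given real singular point $p$ of $X=V(f)$, where $f(x)=\det A(x)$ as in \eqref{eq:AxAx}. Rank $4$ is immediate: then $\det A(p)\neq 0$ and $p\notin X$. The heart of the argument will be to exclude rank $3$, by pairing the Jacobi formula for $\det$ with positive definiteness in the interior of the spectrahedron.

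Suppose, for contradiction, that $A(p)$ has rank exactly $3$. Its kernel is then a real one-dimensional subspace, spanned by some $u\in\R^4\setminus\{0\}$. First I would diagonalize $A(p)$ over $\R$ as $\mathrm{diag}(\epsilon_1,\epsilon_2,\epsilon_3,0)$ with $\epsilon_i=\pm 1$ to observe that $\mathrm{adj}(A(p))$ is the nonzero symmetric rank-one matrix $\lambda\, uu^T$ with $\lambda=\epsilon_1\epsilon_2\epsilon_3$. The Jacobi formula for the differential of the determinant then gives, for $i=0,1,2,3$,
$$
\partial_{x_i} f\big|_p \;=\; \mathrm{tr}\bigl(\mathrm{adj}(A(p))\cdot A_i\bigr) \;=\; \lambda\, u^T A_i\, u.
$$
Since $p$ is singular on $X$, each left-hand side vanishes, and $\lambda\neq 0$ forces $u^T A_i u = 0$ for every $i$. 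Combined with $u^T A(p)u = 0$, this shows that the linear form $u^T A(x)u = \sum_i x_i\,u^T A_i u$ vanishes identically on $\R\PP^3$.

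To finish, I would invoke the spectrahedral hypothesis: $S(f)$ is full-dimensional, so there is an interior point $q\in S(f)$ with $A(q)\succ 0$. Because $u\neq 0$, this gives $u^T A(q)u > 0$, directly contradicting the identity just derived. The one step worth flagging as the main obstacle is the identification $\mathrm{adj}(A(p)) = \lambda\, uu^T$ with $\lambda\neq 0$; it is a standard structural fact about corank-one real symmetric matrices, and it is also the place where the reality of $p$ is used in an essential way, so that $u$ may be chosen in $\R^4$ and paired against the positive definite $A(q)$.
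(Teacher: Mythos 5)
Your proof is correct, and it takes a genuinely different route from the paper's. The paper argues via the pencil spanned by $A(p)$ and $A(p')$ for an interior point $p'$: since $A(p')$ is definite, the pencil is simultaneously diagonalizable, so for each root $\lambda$ of $\det(\lambda A(p') - A(p))$ the corank of the corresponding matrix equals the multiplicity of that root, which in turn equals the intersection multiplicity of the line $L$ with $X$; at $p$ this is $\geq 2$ because $p$ is singular. Your argument instead uses Jacobi's formula $\nabla f|_p = \bigl(\mathrm{tr}(\mathrm{adj}(A(p))A_i)\bigr)_i$ together with the fact that the adjugate of a corank-one real symmetric matrix is a nonzero multiple of $uu^T$ with $u$ spanning the kernel; the singularity of $p$ then forces $u^T A_i u = 0$ for all $i$, i.e.\ $u^T A(x) u \equiv 0$, which is incompatible with $A(q) \succ 0$ at any interior point $q$. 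Both proofs use the same two inputs --- full-dimensionality of $S(f)$ to obtain a definite matrix, and the singularity at $p$ --- but the mechanism is different: the paper counts multiplicities in a simultaneously diagonalizable pencil (which gives a clean geometric picture of corank as intersection multiplicity along $L$), while your computation via the adjugate is a pointwise differential argument that avoids pencils and multiplicity altogether. Both generalize verbatim to show corank $\geq 2$ for $n \times n$ symmetroids of any size.
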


\begin{proof}  Fix a real singular point $p$ on the symmetroid $X$,
and let $L$ be a line through $p$ that contains a point $p^{\prime}$ in the interior of the spectrahedron.  The matrix $A(p^{\prime})$ is then definite  and therefore conjugate to the identity matrix. Hence, in the pencil $\lambda A(p^{\prime})-A(p)$, the singular matrices have corank equal to the multiplicity of the corresponding root of the equation 
\[\det (\lambda A(p^{\prime})-A(p))=0.\]
The multiplicity of a root equals the
multiplicity of the corresponding intersection point in $L \cap X$.
 Since $p \in {\rm Sing}(X)$,
 the root $\lambda =0$ has multiplicity at least $2$.
 This means ${\rm corank}(A(p)) \geq 2$, and hence 
 ${\rm rank}(A(p)) \leq 2$.
  \end{proof}

The spectrahedral hypotheses in Proposition \ref{empty} is necessary.
This can be seen from Theorem \ref{thm:gram} below:
if the univariate polynomial $p \in \R[t]$ is positive, then the spectrahedron ${\rm Gram}(p)$ 
is non-empty and its symmetroid has six complex nodes of rank $3$;
otherwise, ${\rm Gram}(p)$ is empty and the symmetroid
can have up to six real nodes of rank $3$.

\smallskip

Our next topic is the precise relationship between the conic and ramification cubics
seen in the  proof of Proposition \ref{prop:inside}.
 Let $f(x) = {\rm det}(A(x))$ and $e\in \R^4$ in the interior of 
 the spectrahedron $ S = S(f)$, so $A(e)$ is a definite matrix.
   Then,
for every $x\in \R^4$, all roots of the univariate polynomial $f(te+x)\in \R[t]$ are real. 
A cubic polynomial $h(x)$ \emph{interlaces} the quartic $f(x)$ with respect to $e$
if, for each $x$ in $\R^4$, the three roots $\beta_1,\beta_2,\beta_3$ of the polynomial $h(te+x)$ are
real and interlace the roots $\alpha_1\leq
\alpha_2 \leq \alpha_3  \leq \alpha_4$ of $f(te+x)$. By this we mean
\[ \alpha_1 \,\leq \,\beta_1 \,\leq \,\alpha_2 \,\leq \,\beta_2 \,\leq\, \alpha_3 \,\leq \, \beta_3\,\leq \,\alpha_4.\]
Theorem~3.3 of \cite{PV} states that the matrix $A(e)$ is definite if and only if the diagonal cofactors 
of the matrix $A(x)$ interlace $f$ with respect to the point $e$.  In our case, the ramification cubics 
are such diagonal cofactors.  This allows us to add to our extension of Cayley's Theorem.

\begin{proposition}\label{interlacing}
Let $p$ be a  $(1,1)$-node on a very real quartic symmetroid $X=V(f)$  with ramification $R_1 \cup R_2$
and no line on $X$ through $p$. Let $C_p$ be the real conic totally tangent to $R_1\cup R_2$.
 The spectrahedron  $S $ is non-empty if
 and only if the conic $C_p$ interlaces the two ramification cubics $R_1$ and $R_2$ with respect to a common point.
 In this case the image of $S$ under projection from $p$ is the intersection of the convex hull of the ovals of $R_1$ and $R_2$. 
 \end{proposition}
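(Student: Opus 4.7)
The plan is to establish the biconditional in two directions and then identify the image $\pi_p(S)$.

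For the forward direction, I would choose an interior point $e=(e_0,e')$ of $S$, so that $A(e)$ is definite. The $3\times 3$ principal submatrix $A^{(1)}(x)$ formed by deleting the first row and column of $A(x)$ is independent of $x_0$, since only the $(1,2)$ and $(2,1)$ entries of $A(x)$ involve $x_0$; hence $A^{(1)}(e')$ is a definite principal submatrix of $A(e)$. Its determinant equals $F_{11}$, and its $(1,1)$-cofactor is the $2\times 2$ minor $q = \det M_{34}$. Applying the Plaumann--Vinzant interlacing criterion (Theorem~3.3 of \cite{PV}) to $A^{(1)}$, each diagonal cofactor of $A^{(1)}$ interlaces $F_{11}$ with respect to $e'$; in particular, $q$ interlaces $F_{11}$ at $e'$. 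The analogous argument with $A^{(2)}$, the principal submatrix indexed by $\{1,3,4\}$ (also independent of $x_0$), yields that $q$ interlaces $F_{22}$ at $e'$. Thus $C_p$ interlaces both $R_1$ and $R_2$ at the common point $e'$.

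For the backward direction, the interlacing hypothesis forces $F_{11}$ and $F_{22}$ to be hyperbolic with respect to $e'$, so $e'$ lies in the interior of both ovals of $R_1$ and $R_2$ and inside $C_p$. The crucial step is to show $A^{(1)}(e')$ is definite. I would deduce this by running Theorem~3.3 of \cite{PV} in reverse for $A^{(1)}$: hyperbolicity of $F_{11}$ at $e'$ together with Cauchy--Poincar\'e interlacing of eigenvalues in the symmetric pencil $A^{(1)}(e'+tv)$ promotes the single interlacing of the principal minor $q$ to the interlacing of all three diagonal cofactors of $A^{(1)}$ with $F_{11}$, after which the criterion delivers definiteness of $A^{(1)}(e')$. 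Symmetrically, $A^{(2)}(e')$ is definite.

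To produce a point of $S$, I would examine the determinant $\det A(e_0,e') = c^3\bigl(-q(e')e_0^2 + 2g(e')e_0 + \Delta(e')\bigr)$ as a quadratic in $e_0$ whose discriminant equals $4c^6 F_{11}(e')F_{22}(e')$, positive by the sign normalization just established. This quadratic has two distinct real roots $e_0^- < e_0^+$ and is strictly positive on $(e_0^-,e_0^+)$. For any such $e_0$, Cauchy interlacing between $A(e_0,e')$ and its definite principal submatrix $A^{(1)}(e')$ forces three of the four eigenvalues of $A(e_0,e')$ to share the sign of definiteness of $A^{(1)}(e')$; positivity of the determinant then pins down the sign of the fourth eigenvalue, and $A(e_0,e')$ is definite. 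Hence $(e_0,e') \in S$, and $S$ is non-empty.

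The image statement follows by combining this lifting with Proposition~\ref{prop:inside}(b): the forward argument gives $\pi_p(S)$ contained in the intersection of the two oval interiors, and the backward construction supplies the reverse inclusion. The main obstacle is the key lemma that $A^{(1)}(e')$ is definite from the single-cofactor interlacing hypothesis; matching the sign of definiteness of the principal minor $M_{34}(e')$ with the sign of $F_{11}(e')$ is the heart of the argument, and it is exactly the Cauchy--Poincar\'e interlacing of eigenvalues in the matrix pencil that enforces this compatibility.
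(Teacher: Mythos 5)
Your forward direction matches the paper essentially verbatim: in the representation \eqref{eq:DetRep} both principal $3\times3$ blocks indexed by $\{2,3,4\}$ and $\{1,3,4\}$ avoid the $(1,2)$ entry and are therefore independent of $x_0$, and applying \cite[Theorem 3.3]{PV} to each gives the interlacing of $q$ with $F_{11}$ and with $F_{22}$ at the projected interior point $e'$.

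In the backward direction you correctly isolate the heart of the argument as showing that $A^{(1)}(e')$ is definite given only that the single diagonal cofactor $q$ interlaces $F_{11}$, and the remainder of your argument (positive discriminant $4c^6F_{11}(e')F_{22}(e')$ of the quadratic in $e_0$, definiteness of the $4\times4$ from a definite $3\times3$ block together with a positive determinant, whether by Cauchy interlacing as you do or by nested principal minors as the paper does) matches the paper. But the mechanism you propose for the key step is circular. You claim that ``Cauchy--Poincar\'e interlacing of eigenvalues in the symmetric pencil $A^{(1)}(e'+tv)$ promotes the single interlacing of the principal minor $q$ to the interlacing of all three diagonal cofactors.'' Cauchy--Poincar\'e interlacing for the roots of $\det A^{(1)}(te'+v)$ against the roots of a principal subpencil is obtained precisely by conjugating with $A^{(1)}(e')^{-1/2}$ to reduce to ordinary eigenvalue interlacing; it is valid only once one already knows $A^{(1)}(e')$ is definite. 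For an indefinite leading coefficient, the symmetric pencil $\det(tB+C)=0$ need not even have real roots, and subpencil roots need not interlace. So you cannot use Cauchy--Poincar\'e to promote one interlacing cofactor to all three without first establishing the very definiteness you are trying to prove.

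The paper simply takes the equivalence between single-cofactor interlacing and definiteness of $M^{(1)}(e')$ as the content of \cite[Theorem 3.3]{PV} (together with the hypothesis that $F_{11}$ is hyperbolic at $e'$, which is built into the definition of interlacing). The correct move is to invoke that criterion directly rather than to re-derive it by an eigenvalue-curve argument, which as presented would not go through.
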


\begin{proof}
($\Rightarrow$) We write $f$ 
in the form \eqref{eq:symmatrix1}  over $\R$.
The ramification cubics $R_1$ and $R_2$
 are the cofactors  $F_{11}$ and $F_{22}$. 
Let $e\in \R\PP^2$ be an interior point in the image of the spectrahedron $S$ 
 under projection from $p$. 
 For some $e_0\in \R$  the point $(e_0,e)$ lies in the interior of $S$ in $\R \PP^3$.
Then the $3\times 3$ submatrix defined by the rows and columns $\{2,3,4\}$ is also 
positive definite at $e$ and the quadric $q$ defining $C_p$ is a diagonal cofactor of this matrix.  
By \cite[Theorem 3.3]{PV}, the conic $q$ interlaces the 
$3\times 3$ determinant $F_{11}$ with respect to $e$.
Taking instead the $3\times 3$ principal submatrix indexed by $\{1,3,4\}$, 
we see that $C_p$ interlaces $R_2$. 

($\Leftarrow$)  Let $p = (1:0:0:0)$ and
suppose that $C_p$ interlaces both $R_1$ and $R_2$ with respect to $e \in \R^3$. Certainly,
$C_p$, $R_1$, and $R_2$ then have real points. Theorem~\ref{thm:cayley} says that $V(f)$ is very real and
$p$ is a $(1,1)$-node. Its proof  gave a real determinantal representation $A(x)$, up to sign, for $f$
 via \eqref{eq:DetRep}.
It also produced a symmetric $4 {\times} 4$ matrix  $M(x)$ with linear entries in
$\R[x_1,x_2,x_3]$ such that $R_1 = V((M^{\rm adj})_{11} )$, $R_2 = V((M^{\rm adj})_{22}) $,
and $C_p = V( \det(M_{34}) )$.
By the interlacing hypothesis and \cite[Theorem 3.3]{PV}, 
the two principal $3 {\times} 3$ submatrices of $M$ indexed by $\{2,3,4\}$ and $\{1,3,4\}$ 
are both definite  at $e$. 
Since these two definite submatrices of $M(e)$ share a $2\times 2$ diagonal block,
they are either both positive definite or both negative definite. Without loss of 
generality we take them to be positive definite.  In particular, both of the 
$3\times 3$ determinants $(M^{\rm adj})_{11}$ and $(M^{\rm adj})_{22}$ are positive
at the point $e$.

Consider the matrix $A(x)$ given in \eqref{eq:DetRep}, where $m_{jk}$ is the $(j,k)$-th entry of $M(x)$. 
We will show that, for some $e_0\in \R$, the matrix $A(e_0,e)$ is positive definite. 
Using the notation in the proof of Theorem~\ref{thm:cayley}, 
the quadratic polynomial $\det(A(x_0,e)) \in \R[x_0]$ equals 
\begin{equation}
\label{eq:quadratic}
\det(A(x_0,e)) \;\;=\;\;  c^3\cdot \left(- q(e) x_0^2 + 2g(e)x_0 + \Delta(e) \right) .
\end{equation}
Its discriminant is positive:
\[4 g(e)^2 + 4 q(e)\Delta(e)  \;\; =\;\; 4\cdot (M^{\rm adj}(e))_{11}\cdot (M^{\rm adj}(e))_{22} \;\; > \;\; 0.\]
Thus, the quadratic (\ref{eq:quadratic}) changes signs,
and it is positive at some  point $x_0 = e_0\in \R$.
The matrix $A(e_0,e)$ is positive definite. Indeed, the lower
$3\times 3$ submatrix is the same as that of $M(e)$. Thus, the principal minors indexed by
 $\{4\}$, $\{3,4\}$ and $\{2,3,4\}$ are all positive at $(e_0,e)$. 
Since the determinant is positive as well, the matrix $A(e_0,e)$ is positive definite. \smallskip

The above argument shows that for $e\in \R^3$, there exists $e_0\in \R$ so that the matrix $A(e_0,e)$
is definite if and only if the principal $3\times 3$ submatrices defining $R_1$ and $R_2$ are both definite at 
 $e$.  Thus the projection of the spectrahedron $S$ is precisely the intersection of the two cubic 
 spectrahedra bounded by the ovals of $R_1$ and $R_2$. This gives the last statement.
\end{proof}

In our discussion so far we have disallowed
lines through the projection node on $X$.
We close Section \ref{sec:view} with a discussion of
  symmetroids with lines, and non-transversal symmetroids.

\begin{lemma}\label{line} Let $L$ be a line in a nodal quartic symmetroid $X$. 
 Then $X$ has three nodes on $L$, and all three nodes have rank $2$.
Furthermore, if $X$ is very real, then either all three are $(1,1)$-nodes or one is a $(1,1)$-node and the two other are $(2,0)$-nodes. 
\end{lemma}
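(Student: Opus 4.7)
The plan is to restrict the matrix $A(x) = \sum_{i=0}^3 x_i A_i$ to the line $L$ and analyze the resulting singular symmetric pencil. Choose coordinates with $L = V(x_2, x_3)$, so that $A|_L = x_0 A_0 + x_1 A_1$ is a pencil of symmetric $4 \times 4$ matrices with $\det A|_L \equiv 0$. The first step, and the main technical hurdle, is to show that this pencil has a common null vector $v \in \C^4$, i.e.\ $A_0 v = A_1 v = 0$. By the Kronecker--Weierstrass classification of singular symmetric pencils, if no such common null vector exists then the normal form of $A|_L$ must contain a symmetric singular block of minimal index $\varepsilon \geq 1$. A direct computation in this normal form --- for example with the size-$3$ block $\Lambda_1$ together with a regular $1 \times 1$ complement --- shows that the unique rank-$2$ point on $L$ in this scenario has Hessian quadratic form equal to a product of two distinct linear forms, so that the tangent cone of $X$ at that point is a pair of planes rather than an ordinary quadric cone. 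This contradicts the nodality of $X$, so the common null vector $v$ must exist.

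With $v$ in hand, extend it to a basis of $\C^4$ with $v$ as the first vector. In this basis the pencil decomposes as
\[ A|_L \;=\; \begin{pmatrix} 0 & 0 \\ 0 & \widetilde C(x_0,x_1) \end{pmatrix}, \]
where $\widetilde C$ is a pencil of $3 \times 3$ symmetric matrices on $\PP^1$. The rank-$\leq 2$ locus of $A$ on $L$ is now precisely the vanishing of the binary cubic $\det \widetilde C$ in $(x_0, x_1)$, which has three roots counted with multiplicity. A repeated root would, by another local Hessian computation, yield a non-nodal singularity, so the nodality of $X$ forces the three roots to be distinct. These are the three rank-$2$ nodes on $L$ asserted by the first part of the lemma.

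For the signature statement, assume $X$ is very real. Then the common kernel $v$ can be chosen in $\R^4$, so $\widetilde C$ becomes a real symmetric pencil on $L(\R)$, and the signature of $A$ at a real rank-$2$ node $p \in L$ coincides with the signature of $\widetilde C(p)$ on its rank-$2$ image. Parametrize the double cover $S^1$ of $L(\R) \cong \R\PP^1$ by $\theta \in [0, 2\pi)$ and track the signature $(p, q)$ of $\widetilde C$: it is constant between nodes, at each real node one eigenvalue crosses zero and changes $(p, q)$ by $\pm(1, -1)$ leaving residual rank-$2$ signature $(p{-}1, q)$ or $(p, q{-}1)$, and after a half-turn the signature flips to $(q, p)$ because $\widetilde C(-x_0, -x_1) = -\widetilde C(x_0, x_1)$. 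Enumerating the sequences of three such transitions compatible with this antipodal swap shows that the three residual rank-$2$ signatures are either three copies of $(1, 1)$ or one copy of $(1, 1)$ together with two copies of $(2, 0)$ (using the projective identification $(0, 2) \sim (2, 0)$). These are exactly the two cases listed in the lemma.
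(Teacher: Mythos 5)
Your overall architecture is close to the paper's: reduce $A|_L$ to a $3\times 3$ residual pencil $\widetilde C$ via a common null vector, identify the three nodes with the roots of the binary cubic $\det\widetilde C$, and read the signatures off eigenvalue crossings along $L(\R)$. The second paragraph (distinctness of roots via a Hessian rank drop) and the third paragraph (the $\mathbb S^1$ double cover of $\R\PP^1$ with $\widetilde C(-x)=-\widetilde C(x)$, so the signature antipodally flips) are both correct and give a clean account of what the paper does more informally via plane sections and eigenvalue sign changes.

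The gap is in the first paragraph. You correctly identify that the only obstruction to a common null vector for a singular symmetric $4\times4$ pencil is the Kronecker block $H_1\oplus(\ell)$, where $H_1$ is the $3\times 3$ minimal-index-$1$ block and $\ell$ a regular $1\times1$ block. But your claim that in this case the tangent cone of $X$ at the unique rank-$2$ point $p\in L$ is a pair of planes is not substantiated, and a direct computation shows it is generically false. Put $p=(1{:}0{:}0{:}0)$, $\ell=x_1$, $\ker A(p)=\langle e_3,e_4\rangle$, and write the $\{3,4\}$-block as $B=\begin{pmatrix}\alpha & \gamma\\ \gamma & x_1+\beta\end{pmatrix}$ with $\alpha,\beta,\gamma$ linear in $x_2,x_3$ (these come from $A_2,A_3$, since the $\{3,4\}$-entries of $H_1\oplus(\ell)$ other than $\ell$ vanish). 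The tangent cone is $\det B_{\mathrm{lin}}=\alpha(x_1+\beta)-\gamma^2$, and its discriminant as a ternary quadratic form in $(x_1,x_2,x_3)$ works out to $\tfrac14(\gamma_2\alpha_3-\gamma_3\alpha_2)^2$, which is nonzero whenever $\alpha$ and $\gamma$ are independent linear forms in $(x_2,x_3)$ — the generic situation. So $p$ is an ordinary node, and nodality does \emph{not} rule out the $H_1\oplus(\ell)$ block. The ``direct computation'' you invoke therefore does not exist as stated, and the common null vector is not established by this route. (The paper's own proof also simply asserts the common kernel vector without argument; your instinct to supply a justification is good, but the one offered fails, and a correct argument ruling out the $H_1$ block — or showing it cannot occur inside a nodal symmetroid — is still needed before the remainder of the proof applies.)

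One further small point: both your proof and the paper's tacitly treat all three roots of $\det\widetilde C$ as real when discussing signatures; a real binary cubic can have one real and two complex-conjugate roots, in which case only one real node lies on $L$ and only the signature of that one node is meaningful. This does not affect the dichotomy you derive for the real nodes, but it is worth a sentence.
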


\begin{proof} 
The matrices in the pencil parameterized by $L$ all have rank $\leq 3$, so they must share a common kernel vector.  The discriminant of the pencil is a cubic form that vanishes where the matrices have rank at most $2$.  Geometrically, this cubic form defines the common intersection of the cubic curves residual to the line $L$ in plane sections of $X$,  i.e. the cubic curves that together with $L$ form plane sections. 
Furthermore, for a node the singularity of the general plane section through the line is also nodal.
   Thus, if $X$ is nodal, the general residual cubic curve must intersect the line transversally at three points, and each point is a rank $2$ node on $X$.   
If $X$ is very real, one of the eigenvalues of the matrices parameterized by $L$ changes sign at each node, so if there is a semidefinite rank $3$ point on the line, there are  exactly two semidefinite rank $2$ nodes, otherwise all three are $(1,1)$-nodes.   
\end{proof}

 \begin{corollary}
 Let $X$ be a transversal spectrahedral quartic symmetroid that contains a real line $L$.  Let $p$ be a real node on $L$.  Then the two cubic ramification curves $R_1$ and
 $R_2$ under the projection from $p$ intersect on the conic $C_p$, the image of the tangent cone.  
 \end{corollary}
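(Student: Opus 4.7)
The plan is to combine Lemma~\ref{line} with the argument in the proof of Corollary~\ref{cor:ninepoints}, together with the elementary observation that any line on $X$ through a node lies in the tangent cone at that node.

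First I would apply Lemma~\ref{line} to $L$: since $X$ is nodal, the line $L$ carries exactly three rank-$2$ nodes, one of which is $p$. Write the other two as $p_1$ and $p_2$. By the key claim established in the proof of Corollary~\ref{cor:ninepoints}, any rank-$2$ node of $X$ distinct from $p$ is mapped by $\pi_p$ into $R_1\cap R_2$; this uses only that $p$ itself is a rank-$2$ node (guaranteed by Lemma~\ref{line}), and does not require transversality or the absence of lines. Because $p_1$ and $p_2$ both lie on the single line $L$ through $p$, the projection $\pi_p$ collapses them to the same point $\bar L\in\C\PP^2$. Hence $\bar L\in R_1\cap R_2$.

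Next I would show $\bar L\in C_p$. In coordinates with $p=(1{:}0{:}0{:}0)$ and $f=-qx_0^2+2gx_0+\Delta$ as in~\eqref{eq:RamDet}, the tangent cone of $X$ at $p$ is the quadric cone $V(q)$, and $\pi_p$ sends this cone to the conic $C_p=V(q)\subset\C\PP^2$. Parametrizing $L$ through $p$ by $p+tv$ and extracting the lowest-order coefficient of $f$ restricted to $L$ forces the direction $v$ to annihilate $q$; equivalently, $\bar L\in C_p$. Combining the two parts gives $\bar L\in R_1\cap R_2\cap C_p$, as required.

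There is no serious obstacle. The one subtle point worth double-checking is that the proof of Corollary~\ref{cor:ninepoints} still applies in our non-transversal setting (where $X$ contains the line $L$); but that proof only requires $X$ to be a limit of transversal symmetroids and $p$ to be a rank-$2$ node, both of which continue to hold.
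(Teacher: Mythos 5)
Your argument is essentially the paper's own: both note that the two other nodes on $L$ (rank $2$ by Lemma~\ref{line}) project to a single point of $R_1\cap R_2$ via the observation in Corollary~\ref{cor:ninepoints}, and both use the fact that a line on $X$ through the node $p$ lies in the tangent cone, so its image point lies on $C_p$. Your write-up is a little more explicit about why the limit argument in Corollary~\ref{cor:ninepoints} still applies when $X$ contains a line through $p$, but the content and route are the same.
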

 
\begin{proof}
 A line through a real node $p$ in a nodal symmetroid lies in the tangent cone at the node.  
Here the conic $C_p$ contains the image
  of the other two rank $2$-nodes on this line.
 \end{proof}

\section{ Spectrahedral treats}\label{sec:treats}

This section features three special families of spectrahedra.
Their names were chosen to highlight the connection between
convex optimization \cite{BPT} and classical algebraic geometry \cite{Do}.
{\em Toeplitz spectrahedra} have a curve of singular points in their boundary.  
{\em Sylvester spectrahedra}
are transversal but contain $10$ lines in their symmetroids. 
{\em  Kummer spectrahedra}
 are nodal but not transversal. Their characterization in Theorem
\ref{thm:gram} may be  of independent interest.

\begin{figure}[h]
\vskip -0.6cm
\begin{center}
\includegraphics[scale=0.25]{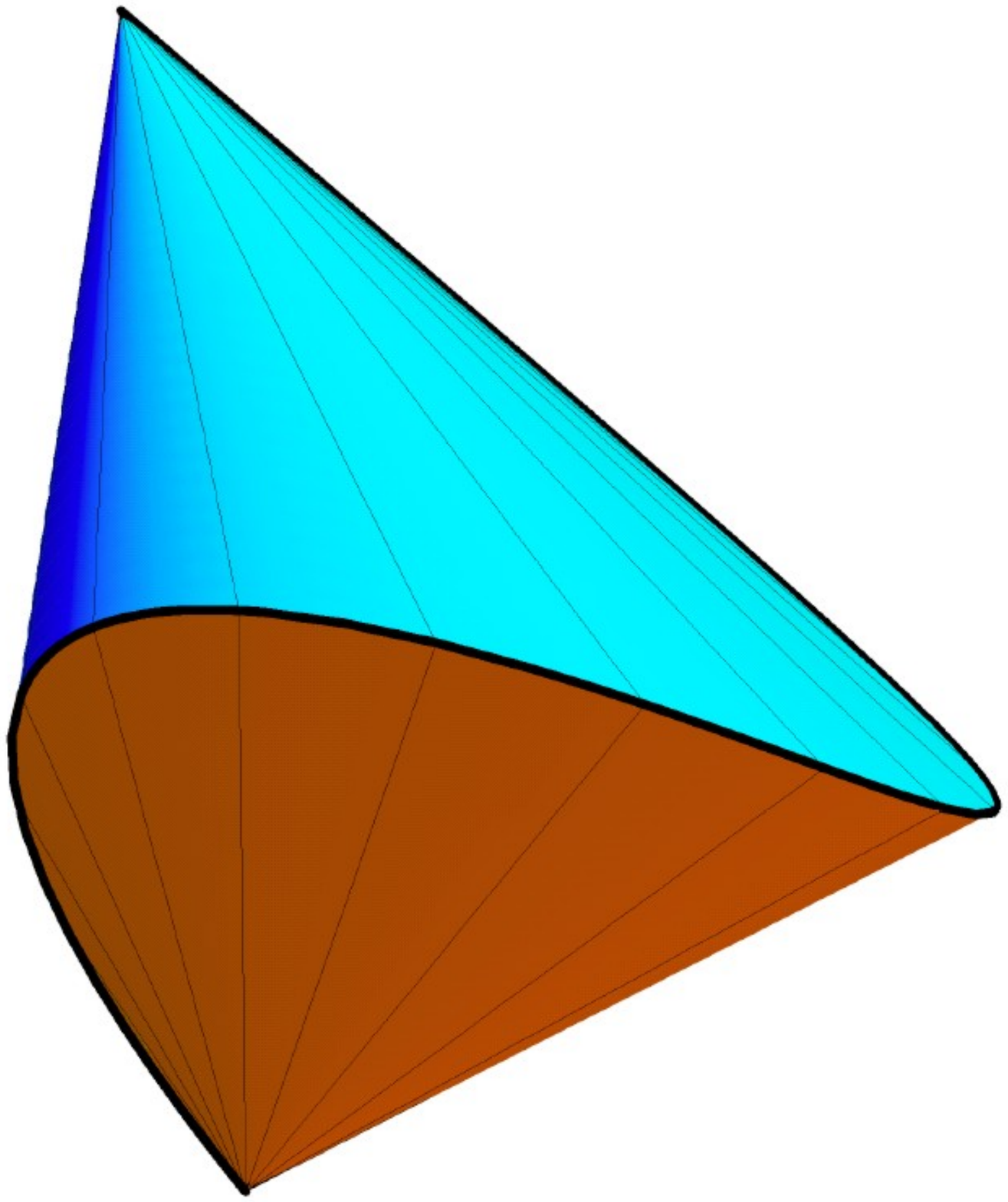}
\includegraphics[scale=0.23]{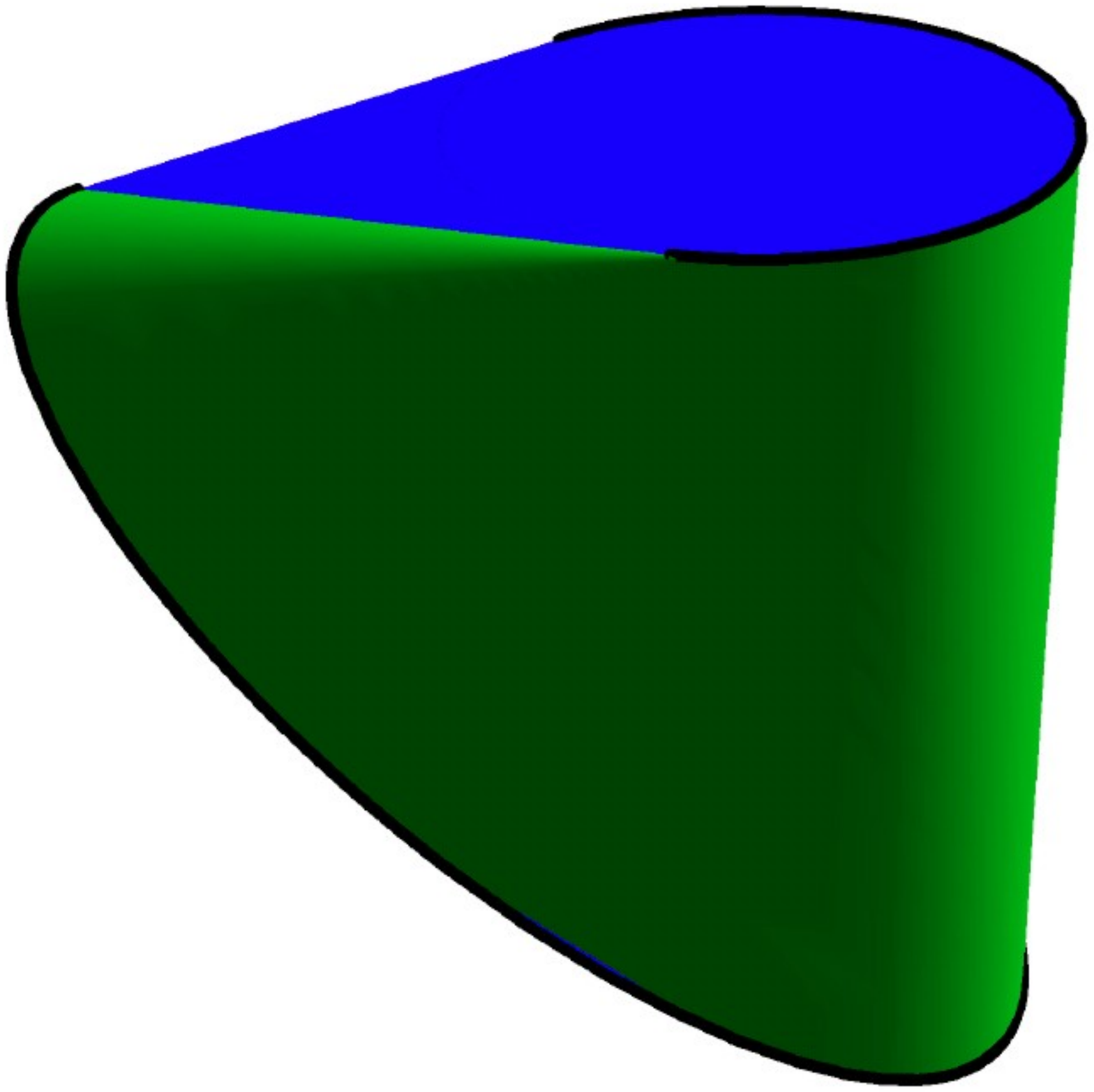}
\end{center}
\vskip -1.4cm
\caption{\label{fig:toeplitz}
Toeplitz spectrahedron and its dual convex body.
}
\end{figure}

\begin{example} \label{ex:tope}
The {\em Toeplitz spectrahedron} is the following convex set
 in affine $3$-space:
\begin{equation}
\label{eq:toeplitz}
\biggl\{ (x,y,z) \in \R^3 \,: \,
\begin{small}
\begin{bmatrix}
  1 & x & y & z \\
     x & 1 & x & y \\
     y & x & 1 & x \\
     z & y & x & 1 
\end{bmatrix}
\end{small}
 \,\succeq \, 0 \,\biggr\}.
\end{equation}
The determinant of the given $4 \times 4$-matrix factors as
$$ (x^2+2xy+y^2-xz-x-z-1)(x^2-2xy+y^2-xz+x+z-1). $$
The  Toeplitz spectrahedron  (\ref{eq:toeplitz})
is the convex hull of the {\em cosine moment curve}
$$ \bigl\{ \bigl( {\rm cos}(\theta), {\rm cos}(2 \theta), {\rm cos}(3 \theta)\bigr) \,:\,
\theta \in [0, \pi] \,\bigr\} . $$
The curve and its convex hull are shown on the left in Figure \ref{fig:toeplitz}.
The two endpoints, $(x,y,z) = (1,1,1) $ and
$(x,y,z) = (-1,1,-1)$,
correspond to rank $1$ matrices. All other points on the curve
have rank $2$.
To construct the Toeplitz spectrahedron geometrically, we form the cone
from each endpoint over the cosine curve, and we intersect
these two quadratic cones. 
The two cones intersect along this curve and the line through the endpoints of the cosine curve.

Shown on the right in Figure \ref{fig:toeplitz} is the dual convex body.
It is the set of trigonometric polynomials 
$1+a_1\cos(\theta) + a_2\cos(2\theta) +a_3\cos(3\theta) $ that are nonnegative on  $[0,\pi]$.
 This convex body is not a spectrahedron because it has a non-exposed edge
(cf.~\cite[Exercise 6.13]{BPT}). 
\hfill $\diamondsuit$ 
\end{example}

In the earlier sections we focused on quartic spectrahedra
that are transversal, given by generic symmetric
matrices $A_0,A_1,A_2,A_3$, so they have precisely
$10$ nodes.  Theorem \ref{thm:DI} reveals
their distribution on and off the spectrahedron.
Toeplitz spectrahedra are far from being transversal.
Indeed, they are singular along an entire curve of rank-2 matrices; the intersection of the two quadratic cones above. Moreover, the spectrahedron is the convex hull of that curve.


\begin{figure}[h]
\vskip -0.5cm
\begin{center}
\includegraphics[scale=0.21]{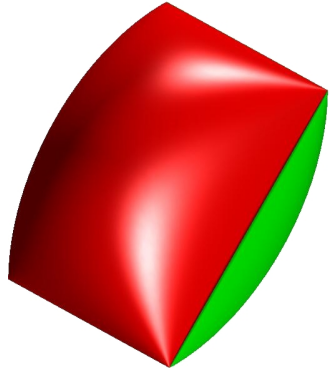} \qquad
\end{center}
\vskip -1.2cm
\caption{\label{fig:pillow}
The pillow is bounded by an irreducible quartic surface.}
\end{figure}

\begin{example}
\label{ex:pillow}
Figure \ref{fig:pillow} shows
a spectrahedron that appears to be reducible, but it is not:
\begin{equation}
\label{eq:nontoeplitz}
\biggl\{ (x,y,z) \in \R^3 \,: \,
\begin{small}
\begin{bmatrix}                                                             
  1 & x & 0 & x \\                                                          
     x & 1 & y & 0 \\                                                       
     0 & y & 1 & z \\                                                       
     x & 0 & z & 1                                                          
\end{bmatrix}
\end{small}     \,\, \succeq 0 \,\, \biggr\}. 
\end{equation}
This spectrahedron was featured in \cite[\S 5.1.1]{BPT}
where it was called {\em the pillow}. This symmetroid
is an irreducible quartic surface that is singular along two lines in the plane at infinity and in the 
four corners of the pillow.  These four corners are coplanar and form the intersection of two pairs of parallel lines that lie in the symmetroid.
\hfill $\diamondsuit$
\end{example}

\smallskip

A {\em Sylvester spectrahedron} is the derivative, 
in the sense of Renegar \cite{Ren} and Sanyal \cite{San}, of a 
$3$-dimensional polytope $P$
defined by five linear inequalities.
Using homogeneous coordinates, the inequalities are
given by linear forms  $\ell_1,\ell_2, \ell_3, \ell_4, \ell_5 \in \R[x_0,x_1,x_2,x_3]$.
The  symmetroid 
is the classical polar, with respect to any interior point of $P$,
of the quintic  
$\{ x \in \C \PP^3: \ell_1(x) \ell_2(x) \ell_3(x) \ell_4(x) \ell_5(x) = 0 \}$.
Following \cite[Theorem 1.1]{San},
we use the matrix
\begin{equation}
\label{eq:sylvester}
A(x) \quad = \quad 
\begin{bmatrix}
\ell_1(x)+\ell_5(x) & \ell_2(x) & \ell_3(x) & \ell_4(x) \\
  \ell_1(x) &  \ell_2(x)+\ell_5(x) & \ell_3(x) & \ell_4(x) \\
   \ell_1(x) &  \ell_2(x) & \ell_3(x)+\ell_5(x) & \ell_4(x) \\
  \ell_1(x) &  \ell_2(x) & \ell_3(x) & \ell_4(x)+\ell_5(x) 
  \end{bmatrix}
\end{equation}
to define the Sylvester spectrahedron.
If no four of the $\ell_i(x)$ are linearly dependent, and
none of the five  inequalities $\ell_i(x) \geq 0 $ is redundant,
then the polytope $P$ is a triangular prism.
Figure \ref{fig:toblerone}
shows a Sylvester spectrahedron that is derived from
a triangular prism $P$.

\begin{figure}[h]
\vskip -0.4cm
\begin{center}
\parbox[m]{4in}{\includegraphics[width=3.5in]{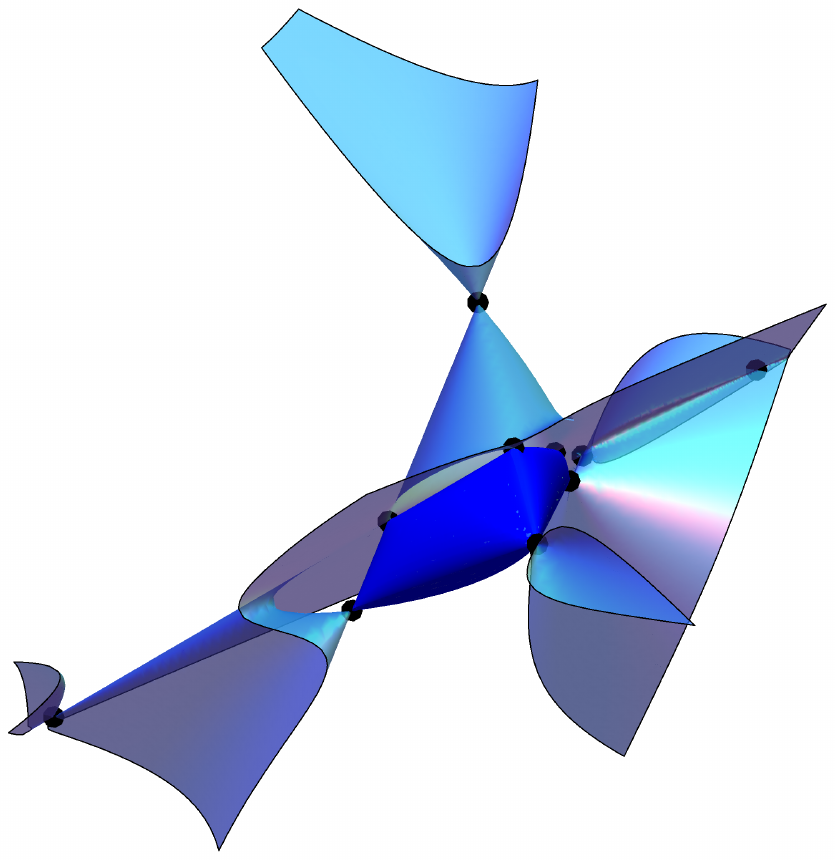} }
\parbox[m]{2in}{\includegraphics[width=2in]{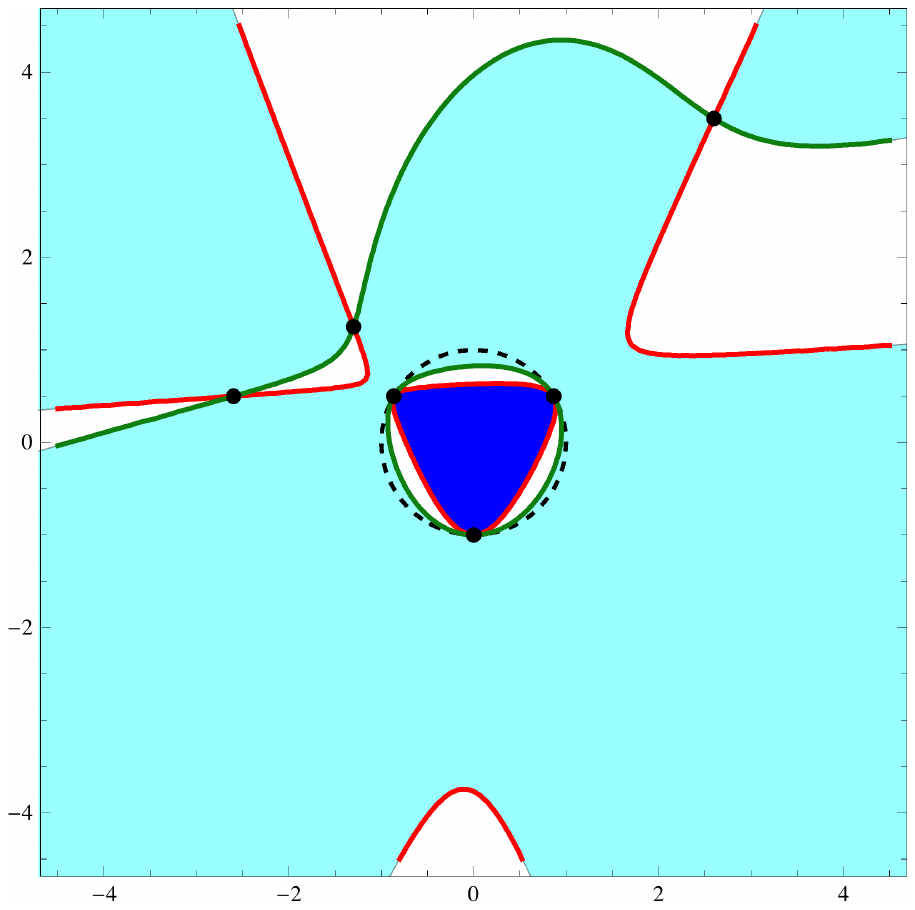} }
\end{center}
\vskip -1.2cm
\caption{\label{fig:toblerone}
Sylvester spectrahedron derived from a triangular prism and a nodal projection.
}
\end{figure}

The name ``Sylvester spectrahedron'' is a reference to
{\em Sylvester's Pentahedral Theorem}. This theorem states that
the polynomial defining a general cubic surface in $\C \PP^3$
admits a unique representation,
over the complex numbers, as a sum of five powers of linear forms:
$$ f(x) \,\, = \,\, \ell_1(x)^3 + \ell_2(x)^3 + \ell_3(x)^3 + \ell_4(x)^3 + \ell_5(x)^3 . $$
The Hessian $\,\bigl( \partial^2 f /\partial x_i \partial x_j \bigr)\,$
of this polynomial is the symmetric $4 \times 4$-matrix
 $\,\sum_{i=1}^5 \ell_i H_i  $, where $H_i $ is the rank-$1$ matrix
$(\nabla \ell_i)^T (\nabla \ell_i)$. In suitable coordinates, this
Hessian matrix is precisely the symmetric matrix $A(x)$ given in (\ref{eq:sylvester}).
 Its determinant  is the quartic form
$$ {\rm det}(A(x)) \quad = \quad \left(
\frac{1}{\ell_1(x)} + 
\frac{1}{\ell_2(x)} +
\frac{1}{\ell_3(x)} +
\frac{1}{\ell_4(x)} +
\frac{1}{\ell_5(x)} \right) \ell_1(x) \ell_2(x) \ell_3(x) \ell_4(x) \ell_5(x). $$

The Sylvester spectrahedron is transversal, i.e.~its singularities exhibit the generic behavior.
The corresponding Sylvester symmetroid in $\C \PP^3$
has precisely ten nodes,
namely the intersection points $\{\ell_i(x) = \ell_j(x) = \ell_k(x) = 0\}$.
All ten nodes are real, so $\rho = 10$.
However, the symmetroid also contains the ten lines
$\{\ell_i(x) = \ell_j(x) = 0\}$. Each line contains three of the nodes.
Compare this to the  derivative of a tetrahedron shown on the left in
 Figure \ref{fig:redyellow}.
 
  Starting from a triangular prism $P$,
the Sylvester spectrahedron has the six vertices and nine edges in its boundary.
These account for six of the nodes and nine of the lines.  The
remaining four nodes of the symmetroid are outside the spectrahedron.
Thus  $(\rho ,\sigma) = (10,6)$ 
in the notation of the census of transversal types in Section 2.
The spectrahedron is shown in dark blue in  Figure \ref{fig:toblerone}. It
 wraps around the polytope $P$ and  contains its edge graph.

The {\em Sylvester symmetroid}  is the quartic surface in 
$\C \PP^3$ defined the vanishing of the determinant (\ref{eq:sylvester}).
The projection of the surface from one of the nodes maps six nodes in pairs to three nodes of a triangle, and the remaining three nodes to the three intersection points of this triangle with a line.    These six points in the plane are the only intersection points of the pair of cubics, the ramification curve of the projection.  The two cubics are tangent at the nodes of the triangle, and there is a $4$-dimensional family of such pairs of cubics with a totally tangent conic, corresponding to the family of cubic forms with parameters $\ell_1,...,\ell_5$.

\begin{figure}[h]
\begin{center}
\includegraphics[scale=0.48]{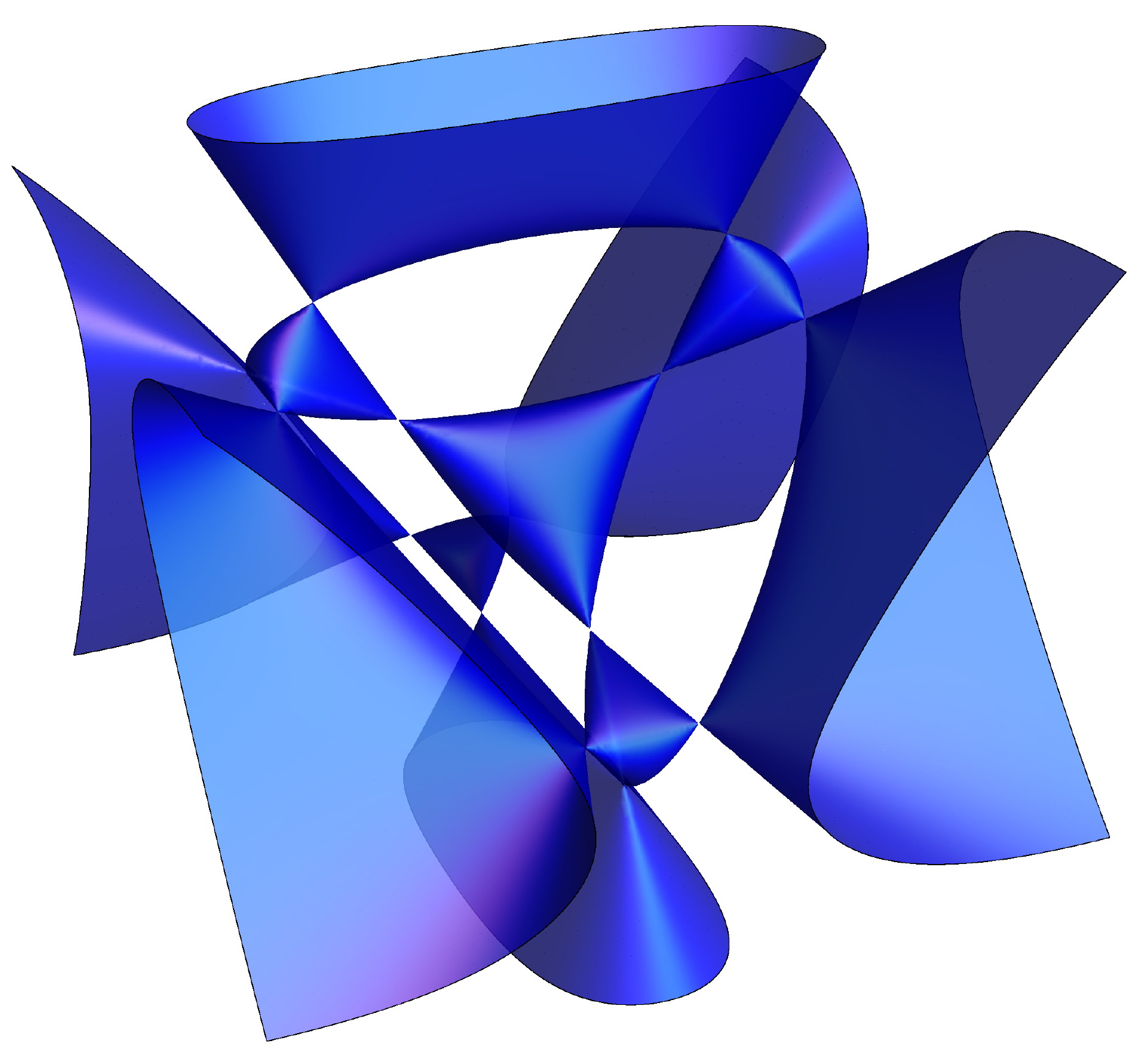}
\end{center}
\caption{\label{fig:kummer}
The Kummer surface is a symmetroid with $16$ nodes.
}
\end{figure}

\bigskip

A perennial favorite among quartics in $\C \PP^3$ is the {\em Kummer surface}.
On the one hand they are the quartic surfaces with a maximal number of nodes, 
namely sixteen \cite{hudson, J}. 
On the other hand, any Kummer surface is the quotient of an abelian surface by its involution, 
so it has $16$ nodes, one for each $2$-torsion point. All $16$ nodes can be real,
as seen in Figure \ref{fig:kummer}.
The compact body  seen in the center may or may not be convex.
Even if it is convex, it cannot be a spectrahedron, as we shall see
in Corollary~\ref{cor:specempty}.

 The following  representation of Kummer surfaces
as symmetroids appears in
 equation (7) on page 143 in Section 41 of Coble's book \cite{Co},
 where this is attributed to H.F.~Baker:
\begin{equation}\label{eq:KummerMatrix}
A(x) \, = \,  \begin{bmatrix}
     a_6 x_3 &      -3 a_5 x_3  &  3 a_4 x_3+x_2 &  \! -a_3 x_3-x_1 \\
     -3 x_3 a_5 &  9 a_4 x_3-2 x_2 &  \! -9 a_3 x_3+x_1 &  3 a_2 x_3+x_0 \\
  \! 3 a_4 x_3+x_2 & \!  -9 a_3 x_3+x_1 &  9 a_2 x_3-2 x_0 &    -3 a_1 x_3 \\
    -a_3 x_3-x_1 &    3 a_2 x_3+x_0 &      -3 a_1 x_3 &       a_0 x_3
   \end{bmatrix}.
\end{equation}
Consider the following sextic polynomial in one variable $t$:
\begin{equation}
\label{eq:unisex}
p(t) \quad = \quad a_0 - 6 a_1 t + 15 a_2 t^2 - 20 a_3 t^3 + 15 a_4 t^4 - 6 a_5 t^5 + a_6 t^6 .
\end{equation}
This polynomial is related to the determinantal representation $A(x)$ as follows:
\begin{equation}
\label{eq:QuadricInCubics}
x_3 p(t) \,\,\, = \,\,\,\,
\begin{bmatrix} t^3 \! & \! t^2 \! & \! t \! & \! 1 \end{bmatrix}
\cdot A_3 x_3 \cdot
\begin{bmatrix} t^3 \! & \! t^2 \! & \! t & \! 1 \end{bmatrix}^T
\,\,\, = \,\,\,
\begin{bmatrix} t^3 \! & \! t^2 \! & \! t \! & \! 1 \end{bmatrix}
\cdot A(x) \cdot
\begin{bmatrix} t^3 \! & \! t^2 \! & \! t & \! 1 \end{bmatrix}^T.
\end{equation}
Suppose that $p(t)$ has six distinct complex roots $u_1,u_2,\ldots,u_6$.
Then  the symmetroid $\{{\rm det}(A(x)) = 0\}$  has $16$ isolated nodes.
Ten of these nodes correspond to matrices of rank~$2$, and we shall
describe these below.  The other six nodes are
$(u_i^2:u_i:1:0)$ and these correspond to rank~$3$ matrices.
All six lie in the plane $\{x_3 = 0\}$, so this is
one of the $16$ planes in the $16_6$ configuration associated
with the Kummer surface. The abelian surface corresponding to the
Kummer surface  $\{{\rm det}(A) = 0\}$ is the Jacobian of the
genus $2$ curve $y^2 = p(t)$, obtained as the double cover
of $\C \PP^1$ ramified at $u_1,u_2,u_3,u_4,u_5,u_6$.   

Let $\mathcal{K}$ denote the closure 
in the space $\C \PP^{34}$ of all quartics,
of the locus of all quartic Kummer surfaces in $\C \PP^3$.
 The Kummer locus $\mathcal{K}$ is an irreducible variety of dimension $18$.
\begin{proposition} \label{prop:KinS}
The variety $\mathcal{K}$ of Kummer surfaces lies in
the variety $\mathcal{S}$ of symmetroids.
\end{proposition}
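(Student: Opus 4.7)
The plan is to exploit Baker's explicit symmetric determinantal representation (\ref{eq:KummerMatrix}) together with the fact that $\mathcal{S}$ is Zariski-closed in $\C\PP^{34}$. First I would observe that the text preceding the proposition already does the hard computational work: for every sextic $p(t)$ as in (\ref{eq:unisex}) with six distinct roots, the matrix $A(x)$ in (\ref{eq:KummerMatrix}) has $\det A(x)$ vanishing on the Kummer surface of the Jacobian of the genus-$2$ curve $y^2 = p(t)$. Thus Baker's construction gives a morphism
\[
B : \C^7 \,\longrightarrow\, \mathcal{S}\,\subset\, \C\PP^{34}, \qquad (a_0,\ldots,a_6)\,\longmapsto\, [\det A(x)],
\]
whose image lies entirely in $\mathcal{S}$. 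Composing with the natural action of $\PGL_4(\C)$ on $\C\PP^{34}$ (which preserves $\mathcal{S}$, since conjugating $A(x)$ by a constant matrix $g$ produces a symmetric determinantal representation of $f(g^{-1}x)$), we obtain a map
\[
\Phi : \PGL_4(\C) \times \C^7 \,\longrightarrow\, \mathcal{S}, \qquad (g,(a_i)) \,\longmapsto\, g\cdot B(a_0,\ldots,a_6).
\]
Every surface in the image of $\Phi$ is both a Kummer surface and a symmetroid.

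Next I would carry out a dimension count to show that $\operatorname{Im}(\Phi)$ is Zariski dense in the irreducible variety $\mathcal{K}$. Projective equivalences of the source of (\ref{eq:unisex}) (the $\PGL_2$-action on $t$ via the twisted cubic embedding $t \mapsto (t^3:t^2:t:1)$) extend to projective transformations of $\C\PP^3$, so a $3$-dimensional subgroup of $\PGL_4$ acts along the fibers of $B$, and simultaneous rescaling of the $a_i$ accounts for one further dimension in the fiber. Therefore the image of $\Phi$ has dimension at most $15 + 7 - 3 - 1 = 18$, matching $\dim \mathcal{K}$. Since $B$ is defined over an $18$-dimensional moduli space of Kummer surfaces (which coincides with the $3$-dimensional moduli of genus-$2$ curves inflated by the $15$-dimensional $\PGL_4$-action), the image of $\Phi$ is an irreducible $18$-dimensional subvariety of $\mathcal{K}$; by irreducibility of $\mathcal{K}$, its Zariski closure is all of $\mathcal{K}$.

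Finally, because $\mathcal{S}$ is Zariski closed in $\C\PP^{34}$ and contains the dense subset $\operatorname{Im}(\Phi)$ of $\mathcal{K}$, we conclude
\[
\mathcal{K} \,=\, \overline{\operatorname{Im}(\Phi)} \,\subseteq\, \overline{\mathcal{S}} \,=\, \mathcal{S},
\]
which is the desired statement.

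The step I expect to be the main obstacle is the density claim in the middle paragraph: concretely, proving that every Kummer quartic in $\C\PP^3$ is projectively equivalent to one of Baker's matrices. The dimension count sketched above reduces this to showing that $B$ is generically finite onto its image modulo the $\PGL_2$-action, which is classical but requires a brief verification that the map $(a_0,\ldots,a_6) \mapsto (\text{isomorphism class of the Jacobian})$ is dominant onto the moduli of principally polarized abelian surfaces with level structure. Assuming this classical fact (as recorded in \cite{Co} and \cite{hudson}), the remainder of the argument is purely topological/dimension-theoretic and requires no further geometric input.
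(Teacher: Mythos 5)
Your proposal follows essentially the same route as the paper: invoke Baker's determinantal representation \eqref{eq:KummerMatrix} to map every genus-$2$ curve $y^2=p(t)$ to a symmetroid, note that Kummer surfaces of Jacobians of genus-$2$ curves are dense in $\mathcal{K}$, and conclude via Zariski-closedness of $\mathcal{S}$. The only difference is that you unpack the density step by an explicit $\PGL_4 \times \C^7$ dimension count ($15+7-3-1=18$), whereas the paper simply cites the classical fact that Jacobians of genus-$2$ curves modulo involution are dense in $\mathcal{K}$; both arguments are correct and rest on the same classical input.
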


\begin{proof}
Every smooth curve of genus $2$
admits a representation as the double cover
of $\C \PP^1$ defined by $y^2 = p(t)$, where $p(t)$ has six distinct roots. This gives rise to a symmetroid surface $A(x)$ as above. Now it is a classical fact that  Jacobians of such curves taken modulo involution are dense in the space $\mathcal{K}$
of Kummer surfaces, so it follows that $\mathcal{K} \subset \mathcal{S}$.
\end{proof}

\begin{remark} Kummer symmetroids have the following interpretation in terms of quadrics. 
Identifying symmetric matrices $A$ with quadrics $z^TAz$, the family $A(x)$ corresponds to a $4$-dimensional vector space of quadrics $W$. The symmetroid is the subset $X\subset  \PP(W)$ parameterizing the singular members of the family. 
The Kummer symmetroids correspond to those vector spaces $W$ containing a 3-dimensional subspace $V$ whose quadrics define a twisted cubic curve. 
Indeed, if $X$ is a Kummer surface, then in the notation of \eqref{eq:KummerMatrix}, the quadrics of  $\langle A_0,A_1,A_2\rangle$ define a twisted cubic curve; $V=\langle z_2^2-z_1z_3, z_0z_3-z_1z_2, z_1^2-z_0z_2\rangle$. Conversely, given $W$ with a vector subspace $V\subset W$ defining a twisted cubic, the plane $\PP(V)\subset \PP(W)$ intersects the symmetroid in a double conic $\{q^2=0\}$ which parameterizes rank three quadrics that contain the twisted cubic curve.
Now if $\PP(V)=\{x_3=0\}$ and the symmetroid is defined by $x_3^4+ax_3^3+bx_3^2+cx_3+q^2$, with $a,b,c,q\in \C[x_0,x_1,x_2]$, then $\{x_3=c=q=0\}$ defines the six rank $3$-nodes on the symmetroid. 
If the symmetroid $X$ is nodal, it has ten additional rank 2-nodes and is Kummer.  \end{remark}

A {\em Kummer spectrahedron} is an element of the semialgebraic set
$\,\mathcal{K}_{\rm spec} = \mathcal{K} \cap \mathcal{S}_{\rm spec}$.
Such a spectrahedron has the following interpretation in convex algebraic geometry.
Given a real vector space $V$ we write $S^d V$ for 
the $d$-th symmetric power of its dual.  Consider the map
\begin{equation}\label{eq:psi}
 \psi \,:\, S^2 S^3 \R^2 \,\rightarrow \,S^6 \R^2, \,\, \ \ \  M \,\mapsto \,
\begin{bmatrix} t^3 \! & \! t^2 \! & \! t \! & \! 1 \end{bmatrix}
\cdot M \cdot
\begin{bmatrix} t^3 \! & \! t^2 \! & \! t & \! 1 \end{bmatrix}^T, 
\end{equation}
which takes quadrics in binary cubics to the corresponding binary sextics.
Elements in the $10$-dimensional space
 $S^2 S^3 \R^2$ are identified with symmetric
 $4 \times 4$-matrices $M$, and elements in the
 $7$-dimensional space $S^6 \R^2$ are identified
 with univariate polynomials $p(t)$ of degree $6$.

The kernel of the map $\psi$ is $3$-dimensional.
Hence the fiber of $\psi$ over any polynomial
$p(t)$ is a $3$-dimensional affine space.
That affine space contains the
{\em Gram spectrahedron} 
$$ {\rm Gram}(p) \,\, = \,\, \bigl\{ M \in 
S^2 S^3 \R^2  \,\,:\,\,
M \,\text{positive semidefinite \ and} \,\,
\psi(M) = p \bigr\}. $$
The term ``Gram spectrahedron'' was coined
in \cite[\S 6]{PSV}, where this  was
studied for ternary quartics $p$. The points
in ${\rm Gram}(p)$  correspond to 
sum of squares representations of $p$ over $\R$.
Indeed, a rank $r$ matrix $M$ in ${\rm Gram}(p)$
has a real Cholesky decomposition
$M = N \cdot N^T$ where $N$ has format $4 \times r$, and this
translates into a sum of squares representation
\begin{equation}
\label{eq:SOSp}
 p(t) \,\,= \,\, 
(\begin{bmatrix} t^3 \! & \! t^2 \! & \! t \! & \! 1 \end{bmatrix} N)
\cdot 
(\begin{bmatrix} t^3 \! & \! t^2 \! & \! t \! & \! 1 \end{bmatrix} N)^T .
\end{equation}
Clearly, in one variable, such a representation exists if and only if
$p(t)$ is non-negative on $\R$.

\begin{lemma}
The Gram spectrahedron ${\rm Gram}(p)$
of the univariate sextic $p(t)$ in (\ref{eq:unisex})
is affinely isomorphic  to the Kummer spectrahedron 
defined by the $4 \times 4$-matrix $A(x)$  in (\ref{eq:KummerMatrix}).
\end{lemma}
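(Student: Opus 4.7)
\medskip

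\noindent\textbf{Proof plan.} The key observation is formula (\ref{eq:QuadricInCubics}), which expresses $\psi(A(x)) = x_3\,p(t)$. Expanding by linearity, $\psi(A(x)) = x_0\psi(A_0)+x_1\psi(A_1)+x_2\psi(A_2)+x_3\psi(A_3)$, and matching coefficients of $x_0,x_1,x_2,x_3$ in $x_3\,p(t)$ forces
\[
\psi(A_0)=\psi(A_1)=\psi(A_2)=0,\qquad \psi(A_3)=p(t).
\]
So $A_0,A_1,A_2$ lie in $\ker\psi$, and $A_3$ is a distinguished preimage of $p$. The first step is to record this observation (which can be checked either by reading it off (\ref{eq:QuadricInCubics}) or by a direct matrix computation on the explicit forms of $A_0,A_1,A_2$).

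Next I will verify that $A_0,A_1,A_2$ are in fact a basis of $\ker\psi$. Since the seven monomials $t^0,t^1,\ldots,t^6$ are all products of two entries of the row vector $[t^3\ t^2\ t\ 1]$, the map $\psi$ is surjective onto $S^6\R^2$; hence $\dim\ker\psi = 10-7 = 3$. Direct inspection of $A_0,A_1,A_2$ (e.g.\ the positions of their nonzero entries in the anti-diagonal pattern of (\ref{eq:KummerMatrix})) shows they are linearly independent. Therefore $\ker\psi=\langle A_0,A_1,A_2\rangle$, and the fiber $\psi^{-1}(p)$ is the affine $3$-plane $A_3+\ker\psi$.

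The final step is to assemble the isomorphism. Consider the affine map
\[
\Phi:\R^3\longrightarrow \psi^{-1}(p),\qquad (x_0,x_1,x_2)\longmapsto x_0A_0+x_1A_1+x_2A_2+A_3.
\]
By the preceding paragraph $\Phi$ is an affine isomorphism. Moreover $\Phi(x_0,x_1,x_2)=A(x_0,x_1,x_2,1)$, so $\Phi$ pulls back the positive-semidefiniteness condition on $M\in\psi^{-1}(p)$ to the condition $A(x_0,x_1,x_2,1)\succeq 0$ defining the Kummer spectrahedron in the affine chart $\{x_3=1\}$. Consequently $\Phi$ restricts to an affine isomorphism from the Kummer spectrahedron in that chart onto $\mathrm{Gram}(p)$, which is the desired statement.

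I do not anticipate a real obstacle here: the content is entirely packed into the identity (\ref{eq:QuadricInCubics}) together with the dimension count $\dim\ker\psi=3$. The only small care needed is to keep straight that the Kummer spectrahedron has been presented projectively, so one must dehomogenize at $x_3=1$ to compare it with the affine Gram spectrahedron ${\rm Gram}(p)\subset S^2S^3\R^2$.
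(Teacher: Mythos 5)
Your proposal is correct and follows essentially the same route as the paper: identify $\ker\psi = \langle A_0, A_1, A_2\rangle$, then observe that after dehomogenizing at $x_3=1$ the spectrahedron of $A(x)$ is exactly the set of PSD matrices in the fiber $\psi^{-1}(p)$. The paper states this very tersely (it simply asserts the kernel is spanned by $A_0,A_1,A_2$); you usefully fill in the dimension count $\dim\ker\psi = 10-7=3$ from surjectivity of $\psi$ and the linear independence of $A_0,A_1,A_2$, as well as the explicit affine map $\Phi$, but these are elaborations of the same argument rather than a different approach.
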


\begin{proof}
The kernel of the map $\psi$ in \eqref{eq:psi} is spanned by the matrices $A_0,A_1,A_2$
appearing in the representation  \eqref{eq:KummerMatrix}.
We identify ${\rm ker}(\psi)$ with the
affine space $\R^3$ obtained from $\R \PP^3$ by  setting $x_3 = 1$.
With this, the spectrahedron defined by our matrix $A(x)$  consists of
all positive semidefinite matrices in the fiber of $\psi$ over
$p(t) = \psi(A_3)$.
\end{proof}

Suppose that the sextic $\,p(t) = \prod_{i=1}^6 (t-u_i)\,$ has six distinct roots in $\C$.
Then there are ten distinct rank $2$ representations (\ref{eq:SOSp}).
These correspond to the ten distinct representations of $p(t)$
as a sum of two squares  over $\C$.
Explicitly, we have
 $\,p =  (q/2)^2 - (r/2)^2 $, where
$$ \begin{matrix}
q(t) & = & 2  t^3-( u_1{+} u_2 {+} u_3 {+} u_4 {+} u_5 {+} u_6)  t^2+( u_1   u_2+ u_1   u_3+ u_2   u_3+ u_4   u_5+ u_4   u_6+ u_5   u_6)  t \\ & & - u_1   u_2   u_3- u_4   u_5   u_6 , \\
r(t)  & = &   ( u_1+ u_2+ u_3- u_4- u_5- u_6)  t^2-( u_1   u_2+ u_1   u_3+ u_2   u_3- u_4   u_5- u_4   u_6- u_5   u_6)  t \\ & & + u_1   u_2   u_3- u_4   u_5   u_6.
\end{matrix}
$$
Note that the number of distinct $\{q,r\}$ is ten,
 one for each
partition $\{\{u_1,u_2,u_3\}, \{u_4,u_5,u_6\} \}$
of the six roots  into two triples.
If all roots are real then all ten formulas express
$p$ as a difference of two squares over $\R$.
As seen in Figure \ref{fig:kummer}, the
corresponding Kummer symmetroid has $16$ real nodes,
 six of rank $3$ and ten of rank $2$.
All rank $2$ nodes have signature $(1,1)$.

 Finally, let us consider the case when $p(t)$ is strictly positive,
 so over $\R$ we can write
 $$ p(t) \,\, = \,\, 
((t-a)^2 + b^2) \cdot
((t-c)^2 + d^2) \cdot
((t-e)^2 + f^2) . $$
The Kummer spectrahedron ${\rm Gram}(p)$ is 
non-empty and three-dimensional. We claim
that it has precisely four nodes in its boundary.
They correspond to 
the following representations:
$$
 \begin{matrix}
p(t) &  = &  \bigl( (t -a) (t -c) (t -e) - (t -a) d f -  (t -c)b f - b d (t -e) \bigr)^2 \quad  \\   & & \quad + \,\,
 \bigl( b d f   - b (t -c) (t -e) - (t -a) d (t -e) - (t -a) (t -c) f \bigr)^2 \\
& = & \bigl( (t-a) (t-c) (t-e) - (t-a) d f + (t-c) bf + b d (t-e) \bigr)^2 \quad \\ & & \quad + \,\,
 \bigl(  b d f   - b (t-c) (t-e) + (t-a) d (t-e) + (t-a) (t-c) f \bigr)^2 \\
& = & \bigl( (t-a) (t-c) (t-e) + (t-a) d f -  (t-c)b f + b d (t-e) \bigr)^2 \quad \\ & & \quad + \,\,
 \bigl(  b d f   + b (t-c) (t-e) - (t-a) d (t-e) + (t-a) (t-c) f \bigr)^2 \\
& = & \bigl( (t-a) (t-c) (t-e) + (t-a) d f + (t-c)b f - b d (t-e) \bigr)^2 \quad \\ & & \quad + \,\,
 \bigl(  b d f   + b (t-c) (t-e) + (t-a) d (t-e) - (t-a) (t-c) f \bigr)^2 .
 \end{matrix}
$$
These formulas arise from the four partitions 
$\{\{u_1,u_2,u_3\}, \{u_4,u_5,u_6\} \}$
of the set
$$ \{u_1,u_2,u_3, u_4,u_5,u_6\} \,\,= \,\,
\{ a + i b, a-ib, c+id, c-id, e+if, e-if\} $$
which satisfy $
(u_1-a)^2 + b^2 =
(u_2-c)^2 + d^2 =
(u_3-e)^2 + f^2 = 0$.
The other six partitions into two triples also give formulas
$p=  (q/2)^2 - (r/2)^2 \,$ but these
are not  defined over $\R$.
For instance, if
$u_1 = a+bi , u_2 = e+fi, u_3=e-fi $ and $
u_4 = a-bi , u_5 = c+di, u_6 = c-di $
then
$$
\begin{matrix}
q(t) &=& 2 t^3+(-2 a-2 c-2 e) t^2+(c^2+d^2+e^2+f^2+2 a c+2 a e) t-a e^2-a d^2-a c^2-a f^2 \\
& &    + \,(2 b e-2 b c) t+b c^2-b f^2+b d^2-b e^2) \cdot i ,\\
r(t) &=& (2 e-2 c) t^2+(-e^2+d^2-f^2-2 a e+c^2+2 a c) t+a f^2+a e^2-a c^2-a d^2 \\
& &     + \, (2 b t^2+(-2 b e-2 b c) t+b c^2+b f^2+b d^2+b e^2) \cdot i.
    \end{matrix}
$$    

Of course, all of the above decompositions can be translated into symmetric $4 \times 4$-matrices $A(x)$ of rank $2$.
We summarize our discussion on Gram spectrahedra of univariate sextics:

\begin{theorem} \label{thm:gram}
The spectrahedron ${\rm Gram}(p)$ is non-empty
if and only if $p(t)$ is non-negative. Suppose this holds
and $p(t)$ has simple roots in $\C$.
Then ${\rm Gram}(p)$ is three-dimensional, and its boundary contains
precisely four nodes, corresponding
  to the four representations of $p(t)$
as sum of two squares over $\R$. The algebraic boundary of
$ {\rm Gram}(p)$ is a Kummer surface,
associated with the genus $2$ curve $\{y^2 = p(t)\}$.
 It contains 
$12$ further complex nodes, six
of rank $2$ in the affine space $\{x_3 = 1\}$
and six of rank $3$ in the hyperplane at
infinity $\{x_3 = 0\}$.
\end{theorem}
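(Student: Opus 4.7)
My plan is to prove the four substantive claims in turn: (i) the equivalence ``$\mathrm{Gram}(p)\neq\emptyset\iff p\ge 0$'', (ii) three-dimensionality of $\mathrm{Gram}(p)$, (iii) the classification of the four real rank-$2$ boundary nodes as two-square representations of $p$, and (iv) the Kummer structure and the remaining twelve complex nodes. The arguments interweave the Cholesky correspondence between PSD Gram matrices and SOS representations with a combinatorial classification of factorizations of $p$ into a pair of cubics, and inherit the Kummer identification from Proposition~\ref{prop:KinS}.

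First I would handle (i) and (ii). For any $M=NN^T$ in $\mathrm{Gram}(p)$ with $N\in\R^{4\times r}$, formula~\eqref{eq:SOSp} writes $p=\sum f_j^2$, hence $p\ge 0$. Conversely, any non-negative $p\in\R[t]$ of degree $6$ factors over $\C$ as $p=g\bar g$ with $\deg g=3$; writing $g=g_1+ig_2$ with $g_j\in\R[t]$ produces the rank-$2$ PSD matrix $NN^T$, where $N=[g_1\mid g_2]$. Under the hypothesis of the theorem, $p\ge 0$ with simple complex roots forces the roots to come in three conjugate pairs $\{u_j,\bar u_j\}_{j=1}^3$ (a simple real root would cause a sign change), so $p>0$ on $\R$. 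For (ii), I would convex-combine the Gram matrices attached to two distinct real two-square decompositions provided by step (iii) below: the result is a rank-$4$ PSD matrix whenever the underlying cubics are linearly independent, which is generically the case, and small perturbations within the three-dimensional kernel $\langle A_0,A_1,A_2\rangle$ of $\psi$ produce a positive definite interior point. Thus $\mathrm{Gram}(p)$ has full dimension~$3$ in the fiber.

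For (iii), modulo the natural right $O(2)$-action on the factor $N$ in $M=NN^T$, a real rank-$2$ point of $\mathrm{Gram}(p)$ is an unordered pair $\{f_1,f_2\}\subset\R[t]$ of cubics with $p=f_1^2+f_2^2$, equivalently a factorization $p=g\bar g$ with $g=f_1+if_2$ cubic. Parametrizing such factorizations by unordered partitions $\{S_1,S_2\}$ of the six roots into triples, the reality condition $\bar S_1=S_2$ forces $S_1$ to contain exactly one root from each conjugate pair, giving $2^3/2=4$ partitions. The remaining six partitions form three conjugate pairs of complex rank-$2$ matrices, exactly as witnessed by the explicit $(q,r)$ formulas preceding the theorem. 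Each real rank-$2$ matrix lies in $\partial\mathrm{Gram}(p)$ because its rank is below~$4$, and is a node of the algebraic boundary because the determinant of a $4\times 4$ symmetric pencil vanishes to order~$2$ at a rank-$2$ point.

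For (iv), Proposition~\ref{prop:KinS} together with the determinantal representation~\eqref{eq:KummerMatrix} attributed to Baker identifies $V(\det A(x))$ with a Kummer surface whose underlying abelian variety is the Jacobian of $\{y^2=p(t)\}$. Such a surface has exactly $16$ nodes. The ten classified above account for all rank-$2$ ones, and the remaining six are the rank-$3$ points $(u_i^2:u_i:1:0)$ at infinity: the identity~\eqref{eq:QuadricInCubics} shows that the vector $(u_i^3,u_i^2,u_i,1)^T$ lies in the kernel of $A$ at such a point, and a short minor computation confirms the rank equals~$3$. I expect the main technical obstacle to be the reality bookkeeping in (iii); the dimension argument in (ii) and the node identification in (iv) are essentially mechanical given the matrix~\eqref{eq:KummerMatrix} and the classical $16$-node count for Kummer surfaces.
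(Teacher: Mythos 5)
Your plan retraces the paper's own route: Cholesky $\leftrightarrow$ SOS for the nonemptiness equivalence, the $2^3/2=4$ count of conjugation-stable partitions for the real rank-$2$ nodes, and the Baker/Coble matrix \eqref{eq:KummerMatrix} with the $16$-node count for the Kummer structure. Two justifications, however, do not hold up as written and need repair.

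\emph{Full-dimensionality.} You assert that the four cubics coming from two distinct real two-square decompositions are linearly independent ``generically,'' and then try to patch the non-generic case by perturbing inside $\ker\psi=\langle A_0,A_1,A_2\rangle$. That patch fails: perturbations in $\ker\psi$ stay in the \emph{same} fiber over the \emph{same} $p$, so they cannot rescue a deficient configuration for that $p$. In fact no genericity is needed and no patch is needed: for $p>0$ with simple roots, any two distinct real decompositions correspond (in the notation of your step (iii)) to cubics $g_1,\bar g_1,g_2,\bar g_2$ whose span contains the two coprime quadratics $q_1=(t-u_1)(t-u_2)$ and $q_2=(t-\bar u_1)(t-\bar u_2)$ together with $tq_1,tq_2$, and these four span all of $\C[t]_{\le 3}$ because $q_1,q_2$ are coprime of degree $2$. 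Hence $\ker M_1\cap\ker M_2=0$ and $\tfrac12(M_1+M_2)$ is already positive definite. You should prove this rather than invoke genericity.

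\emph{The rank-$3$ nodes at infinity.} The identity \eqref{eq:QuadricInCubics} only says that the quadric $z^TA(x_0,x_1,x_2,0)z$ vanishes on the rational normal curve $\{(t^3:t^2:t:1)\}$; it gives $v(t)^TAv(t)=0$, \emph{not} $Av(t)=0$. To conclude that $(u_i^3,u_i^2,u_i,1)^T$ is the kernel vector you either need a direct $4\times 4$ computation with the explicit Baker matrix (which does verify it), or the geometric argument that a rank-$3$ quadric cone containing a twisted cubic must have its vertex on the curve, after which one still has to identify which point. Citing \eqref{eq:QuadricInCubics} alone for the kernel claim is a gap. Relatedly, Proposition~\ref{prop:KinS} establishes $\mathcal K\subseteq\mathcal S$, not that the specific symmetroid $V(\det A(x))$ from \eqref{eq:KummerMatrix} is Kummer; for the latter you need the $16$-node count coming from the explicit analysis (ten rank-$2$ from partitions, six rank-$3$ at infinity), which is what the paper's preceding discussion supplies.

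With these two fixes the argument is complete and matches the paper's intent; the SOS equivalence in (i) and the partition count in (iii) are exactly right.
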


Proposition~\ref{empty}
confirms  that nothing seen in
 Figure \ref{fig:kummer} can be a spectrahedron:

\begin{corollary} \label{cor:specempty}
If a Kummer surface has $\geq 5$ real nodes
then its spectrahedron is empty.
\end{corollary}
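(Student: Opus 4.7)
The plan is to deduce the corollary as a short consequence of Proposition~\ref{empty} together with Theorem~\ref{thm:gram}. By the lemma preceding Theorem~\ref{thm:gram}, any non-empty Kummer spectrahedron is affinely isomorphic to the Gram spectrahedron ${\rm Gram}(p)$ of the univariate sextic $p(t)$ in (\ref{eq:unisex}). Proving the contrapositive -- that ${\rm Gram}(p) \neq \emptyset$ forces the associated Kummer symmetroid to carry at most four real nodes -- therefore suffices.

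First I would dispatch the six rank-$3$ nodes. If ${\rm Gram}(p)$ is non-empty, then Proposition~\ref{empty} requires every real singular point of the symmetroid to have rank at most $2$. But the six rank-$3$ nodes of the Kummer symmetroid are the points $(u_i^2 : u_i : 1 : 0)$, one for each root $u_i$ of $p(t)$, so none of them can be real. Equivalently, by Theorem~\ref{thm:gram} non-emptiness of ${\rm Gram}(p)$ is the same as $p \ge 0$ on $\R$, and since the Kummer hypothesis (sixteen distinct nodes) forces $p$ to have six simple roots, any real root would create a sign change. Either way, all real nodes must sit among the ten rank-$2$ nodes.

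For these, Theorem~\ref{thm:gram} completes the count directly: with six simple complex roots, exactly four of the ten rank-$2$ nodes are real, namely those attached to the four partitions of the six conjugate roots that pair each root with its conjugate (the four real decompositions $p = (q/2)^2 + (r/2)^2$ listed just before the theorem). Combining the two steps, a Kummer surface with non-empty spectrahedron has exactly four real nodes, strictly less than five, which is the desired contradiction. No genuine obstacle arises: the substantive work has already been absorbed into Theorem~\ref{thm:gram}, whose explicit real sum-of-squares parametrization of the boundary nodes supplies the exact count, and the corollary is essentially bookkeeping on top of that theorem and Proposition~\ref{empty}.
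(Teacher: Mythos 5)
Your proof is correct and reconstructs the argument the paper intends, reading off from Proposition~\ref{empty} and Theorem~\ref{thm:gram} that a spectrahedral Kummer symmetroid has precisely four real nodes (the six rank-$3$ nodes are non-real by Proposition~\ref{empty}, and exactly four of the ten rank-$2$ nodes are real, corresponding to the four conjugation-symmetric partitions of the six roots into triples). The one step both you and the paper leave tacit is why an arbitrary very real Kummer symmetroid can be brought into the normal form~\eqref{eq:KummerMatrix} with real $a_i$ before the lemma is invoked; this holds because the six rank-$3$ nodes of a real determinantal representation form a conjugation-invariant set, so their common plane $\PP(V)$ and the associated twisted cubic are defined over $\R$, giving a real sextic $p(t)$.
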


\bigskip
\bigskip

\noindent
{\bf Acknowledgements.}\\
 Bernd Sturmfels was supported by the NSF (DMS-0968882)
     and the Max-Planck Institute f\"ur Mathematik in Bonn.
Cynthia Vinzant was supported by an NSF postdoc (DMS-1204447).

\bigskip
\bigskip

\footnotesize
\noindent {\bf Authors' addresses:}

\noindent John Christian Ottem, 
University of Cambridge, CB3 0WA,
UK, {\tt J.C.Ottem@dpmms.cam.ac.uk}

\noindent Kristian Ranestad, University of Oslo, Postboks 1053 Blindern, 0316 Oslo, Norway,
{\tt ranestad@math.uio.no}

\noindent Bernd Sturmfels,  University of California, Berkeley, CA 94720-3840,
USA, {\tt bernd@berkeley.edu}

\noindent Cynthia Vinzant, University of Michigan, Ann Arbor, MI 48109, USA,
 {\tt vinzant@umich.edu}

\end{document}